\newtheorem{thm}{Theorem}[section]
\newtheorem{cor}[thm]{Corollary}
\newtheorem{lem}[thm]{Lemma}
\newtheorem{prop}[thm]{Proposition}
\theoremstyle{definition}
\theoremstyle{remark}
\numberwithin{equation}{section}
\def\End{{\rm End}}
\def\bbX{{\mathbb X}}
\renewcommand{\mod}{\operatorname{mod}\nolimits}
\begin{document}
\title[Derived equivalences between one-branch extensions of ``rectangles'']{Derived equivalences between one-branch extension algebras of ``rectangles''}

\author[Qiang Dong, Yanan Lin, Shiquan Ruan] {Qiang Dong, Yanan Lin, Shiquan Ruan$^*$}

\address{Qiang Dong, Yanan Lin and Shiquan Ruan;\; School of Mathematical Sciences, Xiamen University, Xiamen, 361005, Fujian, P.R. China.}
\email{dongqiang@stu.xmu.edu.cn, ynlin@xmu.edu.cn, sqruan@xmu.edu.cn}

\thanks{$^*$ the corresponding author}
\subjclass[2010]{15A18, 15A63, 16E35, 16G20, 18E35}
\date{\today}
\keywords{Derived equivalence, incidence algebra, weighted projective line, one-point extension, Coxeter polynomial}
\maketitle

\begin{abstract}
In this paper we investigate the incidence algebras arising from one-branch extensions of ``rectangles''. There are four different ways to form such extensions, and all four kinds of incidence algebras turn out to be derived equivalent. We provide realizations for all of them by tilting complexes in a Nakayama algebra. As an application, we obtain the explicit formulas of the Coxeter polynomials for a half of Nakayama algebras (i.e., the Nakayama algebras $N(n,r)$ with $2r\geq n+2$). Meanwhile, an unexpected derived equivalence between Nakayama algebras $N(2r-1,r)$ and $N(2r-1,r+1)$ has been found.
\end{abstract}

\section{Introduction}

Let ${\bf k}$ denote an algebraically closed field.
For the terminology ``rectangle'' we refer to the following bound quiver (with full commutative relations) for $u\geq 1$:
$$\begin{tikzpicture}
\node()at(-1,0.5){$Q(u)$:};
\node()at(0,0){\tiny{$\circ$}};
\node()at(1,0){\tiny{$\circ$}};
\node()at(1,1){\tiny{$\circ$}};
\node()at(0,1){\tiny{$\circ$}};
\node()at(2,0){\tiny{$\circ$}};
\node()at(2,1){\tiny{$\circ$}};
\node()at(3,0){\tiny{$\circ$}};
\node()at(3,1){\tiny{$\circ$}};
\node()at(4,0){\tiny{$\circ$}};
\node()at(4,1){\tiny{$\circ$}};
\node()at(5,0){\tiny{$\circ$}};
\node()at(5,1){\tiny{$\circ$}};
\node()at(6,0){\tiny{$\circ$}};
\node()at(6,1){\tiny{$\circ$}};
\node()at(3.5,0){$\cdots$};
\node()at(3.5,1){$\cdots$};
\node()at(9,0){};
\draw[-,dashed](0.95,0.05)--(0.05,0.95);
\draw[-,dashed](1.95,0.05)--(1.05,0.95);
\draw[-,dashed](2.95,0.05)--(2.05,0.95);
\draw[-,dashed](4.95,0.05)--(4.05,0.95);
\draw[-,dashed](5.95,0.05)--(5.05,0.95);
\draw[->](0,0.95)--(0,0.05);
\draw[->](1,0.95)--(1,0.05);
\draw[->](0.05,0)--(0.95,0);
\draw[->](0.05,1)--(0.95,1);
\draw[->](2,0.95)--(2,0.05);
\draw[->](3,0.95)--(3,0.05);
\draw[->](4,0.95)--(4,0.05);
\draw[->](1.05,0)--(1.95,0);
\draw[->](1.05,1)--(1.95,1);
\draw[->](2.05,0)--(2.95,0);
\draw[->](2.05,1)--(2.95,1);
\draw[->](4.05,0)--(4.95,0);
\draw[->](4.05,1)--(4.95,1);
\draw[->](5,0.95)--(5,0.05);
\draw[->](5.05,0)--(5.95,0);
\draw[->](5.05,1)--(5.95,1);
\draw[->](6,0.95)--(6,0.05);
\draw[decorate,decoration={brace,mirror,amplitude=2mm}](0,-0.05)--(6,-0.05);
\node()at(3,-0.35){\tiny{$u$-points}};
\end{tikzpicture}$$
The bound quiver $Q(u)$ and its path algebra $A(u)$ over ${\bf k}$, have various appearances in different branches of representation theory, such as singularity theory, incidence algebras of posets, monomorphism category and stable category of vector bundles, etc. c.f. \cite{HM2014, KLM2013, KLM20132, Lad2013, RS2008}.

The path algebra $A(u)$ can be thought as the tensor product $\vec{A}_2\otimes \vec{A}_{u}$ of path algebras of Dynkin type $\mathbb{A}$. In the remarkable paper \cite{Lad2013}, Ladkani revealed an interesting connection between the ``rectangles'' with ``lines'' and ``triangles''. More precisely, $A(u)$ (rectangle) is derived equivalent to the Nakayama algebra $N(2u, 3)$ (line), and also to the stable Auslander algebra of Dynkin type $\vec{A}_{u}$ (triangle).

The relevance of $Q(u)$ and $A(u)$ to weighted projective lines is demonstrated in \cite{KLM2013, KLM20132}. For a weighted projective line $\bbX$ of weight type $(2,3,u+1)$, Kussin, Lenzing and Meltzer constructed a tilting bundle in the stable category of vector bundle over $\bbX$, whose endomorphism algebra is isomorphic to $A(u)$ (\cite{KLM2013}). Moreover, in \cite{KLM20132} they established a relation between the category of vector bundles over $\bbX$ with the category of submodules of nilpotent linear operators studied by Ringel and Schmidmeier \cite{RS2008}, yielding an equivalence between the bounded derived category of $A(u)$ and the stable category of submodules of nilpotent linear operators of nilpotent degree $u+1$.

In this paper, we investigate the one-branch extension algebras of $A(u)$, arising from $A(u)$ by taking one-point extension (resp. co-extension) iteratively.

Let $A(u)_v,\ A(u)^v,\ _vA(u)$ and $^vA(u)$ be the one-branch extension algebras of $A(u)$ associated to the bound quivers in Figure \ref{algebraquiver}, respectively. It turns out that they are all derived equivalent.

\begin{figure}[H]
\begin{tikzpicture}
\node()at(-1,0.5){$Q(u)_v$:};
\node()at(0,0){\tiny{$\circ$}};
\node()at(1,0){\tiny{$\circ$}};
\node()at(1,1){\tiny{$\circ$}};
\node()at(0,1){\tiny{$\circ$}};
\node()at(3,1){\tiny{$\circ$}};
\node()at(3,0){\tiny{$\circ$}};
\node()at(2,0){\tiny{$\circ$}};
\node()at(2,1){\tiny{$\circ$}};
\node()at(4,0){\tiny{$\circ$}};
\node()at(4,1){\tiny{$\circ$}};
\node()at(5,0){\tiny{$\circ$}};
\node()at(5,1){\tiny{$\circ$}};
\node()at(2.5,0){$\cdots$};
\node()at(2.5,1){$\cdots$};
\draw[-,dashed](0.95,0.05)--(0.05,0.95);
\draw[-,dashed](1.95,0.05)--(1.05,0.95);
\draw[-,dashed](3.95,0.05)--(3.05,0.95);
\draw[-,dashed](4.95,0.05)--(4.05,0.95);
\draw[->](0,0.95)--(0,0.05);
\draw[->](1,0.95)--(1,0.05);
\draw[->](2,0.95)--(2,0.05);
\draw[->](3,0.95)--(3,0.05);
\draw[->](0.05,0)--(0.95,0);
\draw[->](0.05,1)--(0.95,1);
\draw[->](1.05,0)--(1.95,0);
\draw[->](1.05,1)--(1.95,1);
\draw[->](3.05,0)--(3.95,0);
\draw[->](3.05,1)--(3.95,1);
\draw[->](4,0.95)--(4,0.05);
\draw[->](4.05,0)--(4.95,0);
\draw[->](4.05,1)--(4.95,1);
\draw[->](5,0.95)--(5,0.05);
\draw[decorate,decoration={brace,mirror,amplitude=2mm}](0,-0.05)--(5,-0.05);
\node()at(2.5,-0.35){\tiny{$u$-points}};
\node()at(6,0){\tiny{$\circ$}};
\node()at(7.5,0){$\cdots$};
\node()at(9,0){\tiny{$\circ$}};
\draw[<-](5.95,0)--(5.05,0);
\draw[<-](6.95,0)--(6.05,0);
\draw[<-](8.95,0)--(8.05,0);
\node()at(7,0){\tiny{$\circ$}};
\node()at(8,0){\tiny{$\circ$}};
\draw[decorate,decoration={brace,mirror,amplitude=2mm}](6,-0.05)--(9,-0.05);
\node()at(7.5,-0.35){\tiny{$v$-points}};
\end{tikzpicture}

\begin{tikzpicture}
\node()at(-1,0.5){$Q(u)^v$:};
\node()at(0,0){\tiny{$\circ$}};
\node()at(1,0){\tiny{$\circ$}};
\node()at(1,1){\tiny{$\circ$}};
\node()at(0,1){\tiny{$\circ$}};
\node()at(3,1){\tiny{$\circ$}};
\node()at(3,0){\tiny{$\circ$}};
\node()at(2,0){\tiny{$\circ$}};
\node()at(2,1){\tiny{$\circ$}};
\node()at(4,0){\tiny{$\circ$}};
\node()at(4,1){\tiny{$\circ$}};
\node()at(5,0){\tiny{$\circ$}};
\node()at(5,1){\tiny{$\circ$}};
\node()at(2.5,0){$\cdots$};
\node()at(2.5,1){$\cdots$};
\draw[-,dashed](0.95,0.05)--(0.05,0.95);
\draw[-,dashed](1.95,0.05)--(1.05,0.95);
\draw[-,dashed](3.95,0.05)--(3.05,0.95);
\draw[-,dashed](4.95,0.05)--(4.05,0.95);
\draw[->](0,0.95)--(0,0.05);
\draw[->](1,0.95)--(1,0.05);
\draw[->](2,0.95)--(2,0.05);
\draw[->](3,0.95)--(3,0.05);
\draw[->](0.05,0)--(0.95,0);
\draw[->](0.05,1)--(0.95,1);
\draw[->](1.05,0)--(1.95,0);
\draw[->](1.05,1)--(1.95,1);
\draw[->](3.05,0)--(3.95,0);
\draw[->](3.05,1)--(3.95,1);
\draw[->](4,0.95)--(4,0.05);
\draw[->](4.05,0)--(4.95,0);
\draw[->](4.05,1)--(4.95,1);
\draw[->](5,0.95)--(5,0.05);
\draw[decorate,decoration={brace,mirror,amplitude=2mm}](0,-0.05)--(5,-0.05);
\node()at(2.5,-0.35){\tiny{$u$-points}};
\node()at(6,1){\tiny{$\circ$}};
\node()at(7.5,1){$\cdots$};
\node()at(9,1){\tiny{$\circ$}};
\node()at(7,1){\tiny{$\circ$}};
\node()at(8,1){\tiny{$\circ$}};
\draw[<-](5.95,1)--(5.05,1);
\draw[<-](6.95,1)--(6.05,1);
\draw[<-](8.95,1)--(8.05,1);
\draw[decorate,decoration={brace,mirror,amplitude=2mm}](6,0.95)--(9,0.95);
\node()at(7.5,0.65){\tiny{$v$-points}};
\end{tikzpicture}

\begin{tikzpicture}
\node()at(-5,0.5){$_vQ(u)$:};
\node()at(0,0){\tiny{$\circ$}};
\node()at(1,0){\tiny{$\circ$}};
\node()at(1,1){\tiny{$\circ$}};
\node()at(0,1){\tiny{$\circ$}};
\node()at(3,1){\tiny{$\circ$}};
\node()at(3,0){\tiny{$\circ$}};
\node()at(2,0){\tiny{$\circ$}};
\node()at(2,1){\tiny{$\circ$}};
\node()at(4,0){\tiny{$\circ$}};
\node()at(4,1){\tiny{$\circ$}};
\node()at(5,0){\tiny{$\circ$}};
\node()at(5,1){\tiny{$\circ$}};
\node()at(2.5,0){$\cdots$};
\node()at(2.5,1){$\cdots$};
\draw[-,dashed](0.95,0.05)--(0.05,0.95);
\draw[-,dashed](1.95,0.05)--(1.05,0.95);
\draw[-,dashed](3.95,0.05)--(3.05,0.95);
\draw[-,dashed](4.95,0.05)--(4.05,0.95);
\draw[->](0,0.95)--(0,0.05);
\draw[->](1,0.95)--(1,0.05);
\draw[->](2,0.95)--(2,0.05);
\draw[->](3,0.95)--(3,0.05);
\draw[->](0.05,0)--(0.95,0);
\draw[->](0.05,1)--(0.95,1);
\draw[->](1.05,0)--(1.95,0);
\draw[->](1.05,1)--(1.95,1);
\draw[->](3.05,0)--(3.95,0);
\draw[->](3.05,1)--(3.95,1);
\draw[->](4,0.95)--(4,0.05);
\draw[->](4.05,0)--(4.95,0);
\draw[->](4.05,1)--(4.95,1);
\draw[->](5,0.95)--(5,0.05);
\draw[decorate,decoration={brace,mirror,amplitude=2mm}](0,-0.05)--(5,-0.05);
\node()at(2.5,-0.35){\tiny{$u$-points}};
\node()at(-1,0){\tiny{$\circ$}};
\node()at(-2,0){\tiny{$\circ$}};
\node()at(-3,0){\tiny{$\circ$}};
\node()at(-2.5,0){$\cdots$};
\node()at(-4,0){\tiny{$\circ$}};
\draw[->](-0.95,0)--(-0.05,0);
\draw[->](-1.95,0)--(-1.05,0);
\draw[->](-3.95,0)--(-3.05,0);
\draw[decorate,decoration={brace,mirror,amplitude=2mm}](-4,-0.05)--(-1,-0.05);
\node()at(-2.5,-0.35){\tiny{$v$-points}};
\end{tikzpicture}

\begin{tikzpicture}
\node()at(-5,0.5){$^vQ(u)$:};
\node()at(-2,1){\tiny{$\circ$}};
\node()at(-3,1){\tiny{$\circ$}};
\node()at(0,0){\tiny{$\circ$}};
\node()at(1,0){\tiny{$\circ$}};
\node()at(1,1){\tiny{$\circ$}};
\node()at(0,1){\tiny{$\circ$}};
\node()at(3,1){\tiny{$\circ$}};
\node()at(3,0){\tiny{$\circ$}};
\node()at(2,0){\tiny{$\circ$}};
\node()at(2,1){\tiny{$\circ$}};
\node()at(4,0){\tiny{$\circ$}};
\node()at(4,1){\tiny{$\circ$}};
\node()at(5,0){\tiny{$\circ$}};
\node()at(5,1){\tiny{$\circ$}};
\node()at(2.5,0){$\cdots$};
\node()at(2.5,1){$\cdots$};
\draw[-,dashed](0.95,0.05)--(0.05,0.95);
\draw[-,dashed](1.95,0.05)--(1.05,0.95);
\draw[-,dashed](3.95,0.05)--(3.05,0.95);
\draw[-,dashed](4.95,0.05)--(4.05,0.95);
\draw[->](0,0.95)--(0,0.05);
\draw[->](1,0.95)--(1,0.05);
\draw[->](2,0.95)--(2,0.05);
\draw[->](3,0.95)--(3,0.05);
\draw[->](0.05,0)--(0.95,0);
\draw[->](0.05,1)--(0.95,1);
\draw[->](1.05,0)--(1.95,0);
\draw[->](1.05,1)--(1.95,1);
\draw[->](3.05,0)--(3.95,0);
\draw[->](3.05,1)--(3.95,1);
\draw[->](4,0.95)--(4,0.05);
\draw[->](4.05,0)--(4.95,0);
\draw[->](4.05,1)--(4.95,1);
\draw[->](5,0.95)--(5,0.05);
\draw[decorate,decoration={brace,mirror,amplitude=2mm}](0,-0.05)--(5,-0.05);
\node()at(2.5,-0.35){\tiny{$u$-points}};
\node()at(-1,1){\tiny{$\circ$}};
\node()at(-2.5,1){$\cdots$};
\node()at(-4,1){\tiny{$\circ$}};
\draw[->](-0.95,1)--(-0.05,1);
\draw[->](-1.95,1)--(-1.05,1);
\draw[->](-3.95,1)--(-3.05,1);
\draw[decorate,decoration={brace,mirror,amplitude=2mm}](-4,0.95)--(-1,0.95);
\node()at(-2.5,0.65){\tiny{$v$-points}};
\end{tikzpicture}
\caption{One-branch extensions of ``rectangles''}\label{algebraquiver}
\end{figure}
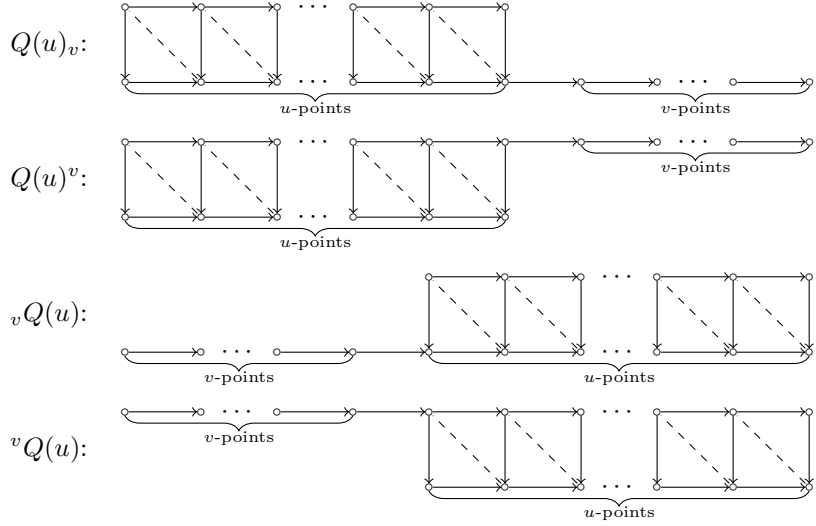

\begin{thm}\label{theorem}{\textup{(see Proposition \ref{main1} and Proposition \ref{main2})}}
For any $u\geq 1$ and $v\geq 1$, we have the following derived equivalences:
$$\textup{D}^{b}(A(u)_v)\simeq\textup{D}^{b}(A(u)^v)\simeq\textup{D}^{b}(_vA(u))\simeq\textup{D}^{b}(^vA(u)).
$$
\end{thm}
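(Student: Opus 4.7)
By Rickard's Morita theorem for derived categories, each equivalence in the chain $\textup{D}^{b}(A(u)_v)\simeq\textup{D}^{b}(A(u)^v)\simeq\textup{D}^{b}({}_vA(u))\simeq\textup{D}^{b}({}^vA(u))$ reduces to producing a tilting complex over one of the algebras whose endomorphism algebra is isomorphic to the next. Since derived equivalence is transitive, only three tilting complexes are needed in total. All four algebras are iterated one-point (co-)extensions of the rectangle algebra $A(u)$ by a ``tail module'' attached at one of the four corner columns; the guiding idea is that these four tail modules are matched under suitable auto-equivalences of $\textup{D}^{b}(A(u))$, and that Happel's theorem on derived equivalences of one-point extensions (combined with its iterated version) then upgrades them to derived equivalences of the full extension algebras.

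\textbf{Reflecting the row of attachment.} For $\textup{D}^{b}(A(u)_v)\simeq\textup{D}^{b}(A(u)^v)$, the tail stays on the right-hand side of the rectangle but its row of attachment flips from bottom to top. I would produce the tilting complex over $A(u)_v$ by performing a BGP/APR-style reflection at the rightmost column of $Q(u)$: replace the indecomposable projective at the bottom vertex of that column by the two-term complex formed by the column arrow $P_{(1,u)}\to P_{(2,u)}$, while keeping all other summands, including the $v$-tail, unchanged. The commutativity relations defining $A(u)$ ensure that the $\Hom$-spaces between the new summand and the tail projectives have precisely the shape required for the endomorphism algebra to be $A(u)^v$. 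The identical recipe, applied at the leftmost column, yields $\textup{D}^{b}({}_vA(u))\simeq\textup{D}^{b}({}^vA(u))$.

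\textbf{Swapping the side of attachment.} The remaining equivalence, for instance $\textup{D}^{b}(A(u)^v)\simeq\textup{D}^{b}({}_vA(u))$, requires moving the tail from the right side of the rectangle to the left, and no local reflection suffices. Following the strategy advertised in the abstract, I would embed both algebras into a common ambient derived category $\textup{D}^{b}(N)$ for a suitable Nakayama algebra $N=N(n,r)$, generalizing Ladkani's equivalence $A(u)\sim N(2u,3)$. The goal is to exhibit, inside $\textup{D}^{b}(N)$, four explicit tilting complexes $T_1,T_2,T_3,T_4$ built from shifted indecomposable projectives of $N$, whose endomorphism algebras are (the opposites of) $A(u)_v,\ A(u)^v,\ {}_vA(u),\ {}^vA(u)$ respectively. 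Once such realizations are given, the four derived equivalences are automatic.

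\textbf{Main obstacle.} The bulk of the work, and the step I expect to be hardest, is guessing the correct tilting complexes $T_i$ in $\textup{D}^{b}(N)$ and verifying that their endomorphism algebras have exactly the prescribed bound-quiver shapes $Q(u)_v,\ Q(u)^v,\ {}_vQ(u),\ {}^vQ(u)$, commutativity relations included. Once a candidate $T_i = \bigoplus_j P_{a_j}[n_j]$ is written down, the verification is purely combinatorial: compute $\Hom_{\textup{D}^{b}(N)}(P_{a_j}[n_j],P_{a_k}[n_k])$ using the Auslander--Reiten structure of the Nakayama algebra $N$, read off the arrows of the endomorphism quiver, and check that the composition relations are exactly the ones prescribing the target algebra. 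The tilting property (generation and vanishing of positive self-extensions) should follow from a length count matching the number of simples of the extension algebra together with a short direct check of $\Hom$-vanishing in nonzero degrees.
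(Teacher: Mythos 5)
Your plan has two parts, and both diverge from what actually carries the paper's proof; more importantly, each part has a genuine problem. The concrete step you do specify — the ``reflection'' replacing the projective at the bottom-right corner of the rectangle in $A(u)_v$ by the two-term complex $P_{(1,u)}\to P_{(2,u)}$ while keeping every other projective (including the tail) — does not work. In $A(u)_v$ the only arrow out of the top-right vertex $u+1$ goes down to the corner vertex $1$, so $0\to P_1\to P_{u+1}\to S_{u+1}\to 0$ is exact and your two-term complex is just $S_{u+1}$ (up to shift). For $u\geq 2$ the resulting object is not even rigid: with $P_2$ the neighbouring bottom-row projective one has $\Hom(P_{u+1},P_2)=0$ but $\Hom(P_1,P_2)\neq 0$, hence $\textup{Ext}^1(S_{u+1},P_2)\neq 0$, i.e.\ $\Hom_{\textup{D}^b(A(u)_v)}(T,T[1])\neq 0$ (and the alternative shift placement fails because $\Hom(P_{u+1},S_{u+1})\neq 0$). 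Even for $u=1$, where the object is a tilting module, its endomorphism algebra is a linear quiver with one zero relation, not the hereditary algebra $A(1)^v$; so the claim that the commutativity relations force $\End(T)\cong A(u)^v$ is false. Moving the tail from one row to the other is not a local one-vertex mutation: in the paper's explicit realizations the non-projective summands form whole families ($S_{u+v}^{(j)}$ for $j=1,\dots,u+v-1$, and genuinely three-term complexes $G_l$), not a single cone.

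The second part of your plan — embedding everything into $\textup{D}^b(N(n,r))$ and exhibiting four tilting complexes whose endomorphism algebras are the four extension algebras — is a viable route, and the paper does carry it out, but only in Section 5 and only after the hard data are supplied: the correct Nakayama algebra is $N(2u+v,u+v+1)$ (not an obvious extrapolation of Ladkani's $A(u)\sim N(2u,3)$), the complexes must be written down explicitly, the $\Hom$-vanishing in nonzero degrees requires the homotopy-category computations of Lemmas \ref{lem1}--\ref{endt}, and generation is checked directly (a count of summands does not by itself give generation). Your proposal defers exactly this content (``guessing the correct tilting complexes''), so as it stands it is a strategy, not a proof. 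The paper's actual proof of Theorem \ref{theorem} is different and softer: Proposition \ref{main1} uses Ladkani's poset theorem (Proposition \ref{Ladkani}) — flipping a closed subset of the poset of ${}^vQ(u)$ — to get $\textup{D}^b({}^vA(u))\simeq\textup{D}^b(A(u)_v)\simeq\textup{D}^b(N(2u+v,u+v+1))$ purely combinatorially, while Proposition \ref{main2} works in $\underline{\textup{vect}}\mbox{-}\mathbb{X}$ for weight type $(2,3,u+v+1)$, using the Kussin--Lenzing--Meltzer tilting objects and perpendicular categories of exceptional sequences of extension bundles (together with the twist $(\vec{x}_1-\vec{x}_2)$ and the Serre functor) to identify $\textup{D}^b({}_vA(u))$ and $\textup{D}^b(A(u)^v)$ with the same Nakayama derived category. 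Neither ingredient appears in your proposal, and without them (or the explicit tilting complexes of Section 5) the chain of equivalences is not established.
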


All these four kinds of algebras are derived equivalent to the Nakayama algebra $N(2u+v,u+v+1)$, but the explicit derived functors are not clear. This problem has been solved in Section 5. We provide a realization for each of them by a tilting complex over the Nakayama algebra $N(2u+v,u+v+1)$.
As an application, we obtain the explicit formulas of the Coxeter polynomials for a half of Nakayama algebras, which is interesting in its own right.

\begin{prop}\label{coxeterpolyofNakayama}{\textup{(See Corollary \ref{corollaryofpolynomial})}}
For $r,n\in\mathbb{N}$ with $2r\geq n+2$, the Coxeter polynomial $\chi_{N(n,r)}(\lambda)$ of the Nakayama algebra $N(n,r)$ is given as below:
\begin{itemize}
\item[(1)] For $2r=n+2$,
$$\chi_{N(n,r)}(\lambda)=\left\{\begin{array}{lcl}\frac{(\lambda+1)\big(\lambda^{3n-3r+6}-(-1)^{n-r}\big)}
{(\lambda^3+1)\big(\lambda^{n-r+2}-(-1)^{n-r}\big)},&&n-r\not\equiv 1\,(\mod 3),\\\\
\frac{(\lambda+1)\big(\lambda^{n-r+2}-(-1)^{n-r}\big)^2}{\lambda^3+1},&&n-r\equiv 1\,(\mod 3).\end{array}\right.$$
\item[(2)] For $2r\geq n+3$,
$$\chi_{N(n,r)}(\lambda)=\left\{\begin{array}{lcl}\frac{(\lambda+1)
\big(\lambda^{n+2}+(-1)^{n-r}\sum\limits_{j=0}^{2r-n-2}\lambda^{n-r+2+j}+1\big)}{\lambda^3+1},
&&n-r\equiv 0\,(\mod 3),\\\\
\frac{(\lambda+1)\big(\lambda^{n-r+2}-(-1)^{n-r}\big)\big(\lambda^{r}-(-1)^{n-r}\big)}{\lambda^3+1},
&&n-r\equiv 1\,(\mod 3),\\\\
\frac{(\lambda+1)\big(\lambda^{n+2}+(-1)^{n-r+1}\sum\limits_{j=2}^{2r-n-2}\lambda^{n-r+1+j}+1\big)}{\lambda^3+1},
&&n-r\equiv 2\,(\mod 3).\end{array}\right.$$
\end{itemize}
\end{prop}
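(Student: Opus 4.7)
The plan is to combine Theorem \ref{theorem} with the fact that the Coxeter polynomial is a derived invariant. Given $(n,r)$ with $2r\geq n+2$, introduce the parameters $u = n-r+1\geq 1$ and $v = 2r-n-2\geq 0$, so that $n = 2u+v$ and $r = u+v+1$. When $v\geq 1$, Theorem \ref{theorem} (together with the realization of Section 5) gives $\textup{D}^{b}(N(n,r))\simeq \textup{D}^{b}(A(u)_v)$, while the boundary case $v=0$ (i.e.\ $2r=n+2$) is $N(2u,u+1)$, derived equivalent to the rectangle $A(u)=A(u)_0$ itself via the tilting complex constructed in Section~5. In either situation, the Coxeter polynomials of $N(n,r)$ and of the corresponding one-branch extension coincide, so it suffices to compute $\chi_{A(u)_v}(\lambda)$.

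To carry this out, I would write down the Cartan matrix $C$ of $A(u)_v$ directly from its bound quiver with full commutativity relations: the rectangle portion $\vec{A}_2\otimes\vec{A}_u$ contributes a $2u\times 2u$ block which factors as a Kronecker product of the Cartan matrices of $\vec{A}_2$ and $\vec{A}_u$, while the linear tail of $v$ vertices contributes an additional block linked to the rectangle only through the attachment vertex. With this block structure, $C^{-1}$ is tridiagonal in the tail and easily inverted in the rectangle, so the Coxeter matrix $\Phi = -C^{-\top}C$ inherits a similar block form. Expanding $\det(\lambda I-\Phi)$ by successive row/column operations along the tail, and grouping terms using the cyclotomic-type identity
\[
\lambda^{m}-(-1)^m=(\lambda+1)\sum_{j=0}^{m-1}(-1)^{m-1-j}\lambda^j,
\]
one obtains a compact rational expression in which the recurrent factor $\lambda^{3}+1$ reflects the weight~$3$ of the weighted projective line $\bbX(2,3,u+1)$ associated to $A(u)$ via \cite{KLM2013, KLM20132}.

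The bulk of the work, and the main obstacle, is the case analysis needed to put the answer into the clean closed forms displayed. The trichotomy on $n-r\equiv u-1\pmod 3$ is precisely the trichotomy governing whether $\lambda^3+1$ divides $\lambda^{u+1}-(-1)^u$, $\lambda^{u+2}-(-1)^u$, or neither, since these are the ``branch factors'' arising after unwinding the tensor structure of $A(u)$. In the residues $n-r\equiv 0,2\pmod 3$ exactly one such factor absorbs a copy of $\lambda^3+1$, producing the first and third branches of part (2), whereas in the residue $n-r\equiv 1\pmod 3$ no cancellation is possible, which is why the relevant factor $(\lambda^{n-r+2}-(-1)^{n-r})$ appears squared in the middle branch. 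Specialising $v=0$ collapses the three subcases of (2) into the two subcases of (1), since on the rectangle the two branch factors coincide and one of them absorbs $\lambda^3+1$ whenever $n-r\not\equiv 1\pmod 3$. The remaining verification is an essentially mechanical bookkeeping exercise with geometric and cyclotomic sums.
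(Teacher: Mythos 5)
Your first step coincides with the paper's: set $u=n-r+1$, $v=2r-n-2$, use the derived equivalence $\textup{D}^b(N(2u+v,u+v+1))\simeq\textup{D}^b(A(u)_v)$ (Proposition \ref{main1}, and the $v=0$ case from the tilting objects $T_1,T_2$ in the proof of Proposition \ref{main2}), and invoke derived invariance of the Coxeter polynomial. But from that point on there is a genuine gap: the entire content of the statement is the closed formula for $\chi_{A(u)_v}(\lambda)$, and your proposal never establishes it. The paper obtains it in Section 3 through a nontrivial chain of arguments: Hille--M\"uller's formula for the rectangle $A(u)$ (Proposition \ref{d=0}), Happel's one-point-extension formula (Proposition \ref{Hap 1-ex}) combined with the computation of the Euler forms $\left\langle \tau^{j}M,M\right\rangle$ via Auslander bundles on the weighted projective line of type $(2,3,u+1)$ (Lemmas \ref{lemma<>}--\ref{<mf,m>}) to handle $A(u)_1$, and then Lenzing's perpendicular-category recursion $\chi_{A(u)_{v+1}}=(1+\lambda)\chi_{A(u)_v}-\lambda\chi_{A(u)_{v-1}}$ (Proposition \ref{one-point-extension}) together with the three-term-sequence argument to reach general $v$. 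Your substitute --- expand $\det(\lambda I-\Phi)$ directly from the block Cartan matrix of $A(u)_v$ and sort out the cases ``by mechanical bookkeeping'' --- is only a plan; no recursion, induction, or determinant identity is actually produced, and for a result whose whole point is an explicit formula this omission is the proof.

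Moreover, the heuristics you offer for how the case analysis would come out indicate the sketch has not been thought through and would not deliver the stated formulas as described. The squared factor $\big(\lambda^{n-r+2}-(-1)^{n-r}\big)^2$ occurs in part (1) with $n-r\equiv 1$ (mod $3$), which is exactly the case where $\lambda^3+1$ \emph{does} divide $\lambda^{n-r+2}-(-1)^{n-r}$ (indeed it must, or the displayed expression would not be a polynomial), contrary to your claim that this case is the one with ``no cancellation''; and the branches $n-r\equiv 0,2$ (mod $3$) of part (2) are not products of two cyclotomic-type ``branch factors'' at all, so the picture of ``exactly one factor absorbing a copy of $\lambda^3+1$'' does not match the actual shape of the answer. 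If instead you simply cited Proposition \ref{d=0} (for $2r=n+2$) and Proposition \ref{coxeterpoly} (for $2r\geq n+3$) after your reduction, the argument would be complete and would coincide with the paper's proof; as written, the crucial computation is missing.
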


For the proof of Theorem \ref{theorem}, the powerful derived equivalence between categories of sheaves over finite posets developed by Ladkani \cite{Lad2008} has been used, and it also relies on the explicit realization of $A(u)$ by tilting bundles in the stable category of vector bundles over the weighted projective line $\bbX$ of weight type $(2,3,u+1)$. And the proof of Proposition \ref{coxeterpolyofNakayama} is basing on the one-point extension approach and perpendicular calculations, c.f., Happel \cite{Hap2009} and Lenzing \cite{Len1997}.

The paper is organized as follows. In Section 2, we recall basic definitions and facts on incidence algebras and weighted projective lines. In Section 3, we give the Coxeter polynomials for the one-branch extension algebra $A(u)_v$ by using one-point extension approach and perpendicular calculations. The derived equivalences between four kinds of one-branch extension algebras are proved in Section 4. As an application, we obtain the explicit formulas of the Coxeter polynomials for all the Nakayama algebra $N(n,r)$ with $2r\geq n+2$, and an unexpected derived equivalence between the Nakayama algebras $N(2r-1,r)$ and $N(2r-1,r+1)$ is given. In the final Section 5, we give explicit realizations of $A(u)_v,\ A(u)^v,\ _vA(u)$ and $^vA(u)$ by tilting complexes over the Nakayama algebra $N(2u+v,u+v+1)$.

\section{preliminary}

We recall some notations and facts on incidence algebras of posets and weighted projective lines in this subsection.

\subsection{Incidence algebras of posets}

A set $X$ with a binary relation $\leq$ is a \emph{partially ordered set} if $\leq$ is reflexive, transitive and antisymmetric. Throughout this paper, the term \emph{poset} will mean a finite partially ordered set.

A poset can be represented visually by \emph{Hasse diagrams}. Such a diagram consists of vertices which represent the elements of the poset, and line segments which determine the relationships between elements. A line segment is drawn from $x$ down to $y$ if $x\leq y$, $x\neq y$ and whenever $x\leq z\leq y$ with $y\neq z$, then $x=z$. In this paper, we focus on posets with the following Hasse diagrams.

\begin{figure}[H]\begin{tikzpicture}
\node()at(0,0){\tiny{$\bullet$}};
\node()at(-0.25,0.5){\tiny{$\bullet$}};
\node()at(-1,2){\tiny{$\bullet$}};
\node()at(-1.25,2.5){\tiny{$\bullet$}};
\node()at(-0.75,1.5){\tiny{$\bullet$}};
\node()at(-0.5,1){\tiny{$\bullet$}};
\node()at(0,1.25){\tiny{$\bullet$}};
\node()at(-0.25,1.75){\tiny{$\bullet$}};
\node()at(1.25,1.5){\tiny{$\bullet$}};
\node()at(1.5,1){\tiny{$\bullet$}};
\node()at(2,1.25){\tiny{$\bullet$}};
\node()at(1.75,1.75){\tiny{$\bullet$}};
\draw[-](0,0)--(-0.25,0.5);
\draw[dotted](-0.5,1)--(-0.75,1.5);
\draw[-](-0.25,0.5)--(-0.5,1);
\draw[-](-1,2)--(-0.75,1.5);
\draw[-](-1,2)--(-1.25,2.5);
\node()at(0.5,0.25){\tiny{$\bullet$}};
\node()at(0.25,0.75){\tiny{$\bullet$}};
\node()at(-0.5,2.25){\tiny{$\bullet$}};
\node()at(-0.75,2.75){\tiny{$\bullet$}};
\draw[-](-0.5,2.25)--(-1,2);
\draw[-](-0.75,2.75)--(-1.25,2.5);
\draw[-](0.5,0.25)--(0.25,0.75);
\node()at(0.5,-1){\tiny{$\bullet$}};
\node()at(0.75,-1.5){\tiny{$\bullet$}};
\node()at(3,-0.75){\tiny{$\bullet$}};
\node()at(3.25,-1.25){\tiny{$\bullet$}};
\draw[dotted](0,1.25)--(-0.25,1.75);
\draw[-](0.25,0.75)--(0,1.25);
\draw[-](0,0)--(0.5,0.25);
\draw[-](-0.25,0.5)--(0.25,0.75);
\draw[-](-0.5,1)--(0,1.25);
\draw[-](-0.75,1.5)--(-0.25,1.75);
\draw[-](-0.5,2.25)--(-0.25,1.75);
\draw[-](-0.5,2.25)--(-0.75,2.75);
\node()at(0.25,-0.5){\tiny{$\bullet$}};
\node()at(1,-2){\tiny{$\bullet$}};
\draw[-](0,0)--(0.25,-0.5);
\draw[-](0.5,-1)--(0.25,-0.5);
\draw[-](1,-2)--(0.75,-1.5);
\draw[dotted](0.75,-1.5)--(0.5,-1);
\node()at(2,0){\tiny{$\bullet$}};
\node()at(1.75,0.5){\tiny{$\bullet$}};
\node()at(1,2){\tiny{$\bullet$}};
\node()at(0.75,2.5){\tiny{$\bullet$}};
\draw[-](0.75,2.5)--(1.25,2.75);
\draw[-](1,2)--(1.5,2.25);
\draw[-](0.75,2.5)--(1,2);
\draw[-](1,2)--(1.25,1.5);
\draw[-](2,0)--(1.75,0.5);
\draw[dotted](1.5,1)--(1.25,1.5);
\draw[-](1.75,0.5)--(1.5,1);
\node()at(2.5,0.25){\tiny{$\bullet$}};
\node()at(2.25,0.75){\tiny{$\bullet$}};
\node()at(1.5,2.25){\tiny{$\bullet$}};
\node()at(1.25,2.75){\tiny{$\bullet$}};
\draw[-](1.25,2.75)--(1.5,2.25);
\draw[-](1.75,1.75)--(1.5,2.25);
\draw[-](2.5,0.25)--(2.25,0.75);
\draw[dotted](2,1.25)--(1.75,1.75);
\draw[-](2.25,0.75)--(2,1.25);
\draw[-](2,0)--(2.5,0.25);
\draw[-](1.75,0.5)--(2.25,0.75);
\draw[-](1.5,1)--(2,1.25);
\draw[-](1.25,1.5)--(1.75,1.75);
\node()at(2.75,-0.25){\tiny{$\bullet$}};
\node()at(3.5,-1.75){\tiny{$\bullet$}};
\draw[-](2.5,0.25)--(2.75,-0.25);
\draw[-](3,-0.75)--(2.75,-0.25);
\draw[-](3.25,-1.25)--(3.5,-1.75);
\draw[dotted](3,-0.75)--(3.25,-1.25);
\node()at(5.5,-1.75){\tiny{$\bullet$}};
\node()at(5.25,-1.25){\tiny{$\bullet$}};
\node()at(4.5,0.25){\tiny{$\bullet$}};
\node()at(4.25,0.75){\tiny{$\bullet$}};
\node()at(4,1.25){\tiny{$\bullet$}};
\node()at(3.25,2.75){\tiny{$\bullet$}};
\draw[-](4,1.25)--(3.75,1.75);
\draw[-](4,1.25)--(4.25,0.75);
\draw[-](3.25,2.75)--(3.5,2.25);
\draw[dotted](3.75,1.75)--(3.5,2.25);
\draw[-](5.5,-1.75)--(5.25,-1.25);
\draw[-](5.25,-1.25)--(5,-0.75);
\draw[-](4.5,0.25)--(4.75,-0.25);
\draw[-](4.5,0.25)--(4.25,0.75);
\draw[dotted](5,-0.75)--(4.75,-0.25);
\draw[-](5.5,-1.75)--(6,-1.5);
\draw[-](5.25,-1.25)--(5.75,-1);
\draw[-](4.5,0.25)--(5,0.5);
\draw[-](4.75,1)--(4.25,0.75);
\draw[-](5,-0.75)--(5.5,-0.5);
\draw[-](4.75,-0.25)--(5.25,0);
\node()at(6,-1.5){\tiny{$\bullet$}};
\node()at(5.75,-1){\tiny{$\bullet$}};
\node()at(5,0.5){\tiny{$\bullet$}};
\node()at(4.75,1){\tiny{$\bullet$}};
\draw[-](6,-1.5)--(5.75,-1);
\draw[-](5.75,-1)--(5.5,-0.5);
\draw[-](5,0.5)--(5.25,0);
\draw[-](5,0.5)--(4.75,1);
\node()at(5.25,0){\tiny{$\bullet$}};
\node()at(5.5,-0.5){\tiny{$\bullet$}};
\node()at(4.75,-0.25){\tiny{$\bullet$}};
\node()at(5,-0.75){\tiny{$\bullet$}};
\node()at(7.25,0){\tiny{$\bullet$}};
\node()at(7.5,-0.5){\tiny{$\bullet$}};
\node()at(6.75,-0.25){\tiny{$\bullet$}};
\node()at(7,-0.75){\tiny{$\bullet$}};
\draw[dotted](5.25,0)--(5.5,-0.5);
\node()at(7.5,-1.75){\tiny{$\bullet$}};
\node()at(7.25,-1.25){\tiny{$\bullet$}};
\node()at(6.5,0.25){\tiny{$\bullet$}};
\node()at(6.25,0.75){\tiny{$\bullet$}};
\draw[-](7.5,-1.75)--(7.25,-1.25);
\draw[-](7.25,-1.25)--(7,-0.75);
\draw[-](6.5,0.25)--(6.75,-0.25);
\draw[-](6.5,0.25)--(6.25,0.75);
\node()at(3.75,1.75){\tiny{$\bullet$}};
\node()at(3.5,2.25){\tiny{$\bullet$}};
\node()at(6.25,2){\tiny{$\bullet$}};
\node()at(6,2.5){\tiny{$\bullet$}};
\draw[dotted](7,-0.75)--(6.75,-0.25);
\draw[-](7.5,-1.75)--(8,-1.5);
\draw[-](7.25,-1.25)--(7.75,-1);
\draw[-](6.5,0.25)--(7,0.5);
\draw[-](6.75,1)--(6.25,0.75);
\draw[-](7,-0.75)--(7.5,-0.5);
\draw[-](6.75,-0.25)--(7.25,0);
\node()at(8,-1.5){\tiny{$\bullet$}};
\node()at(7.75,-1){\tiny{$\bullet$}};
\node()at(7,0.5){\tiny{$\bullet$}};
\node()at(6.75,1){\tiny{$\bullet$}};
\draw[-](8,-1.5)--(7.75,-1);
\draw[-](7.75,-1)--(7.5,-0.5);
\draw[-](7,0.5)--(7.25,0);
\draw[-](7,0.5)--(6.75,1);
\draw[dotted](7.25,0)--(7.5,-0.5);
\node()at(6.5,1.5){\tiny{$\bullet$}};
\node()at(5.75,3){\tiny{$\bullet$}};
\draw[-](6.5,1.5)--(6.25,2);
\draw[-](6.5,1.5)--(6.75,1);
\draw[-](5.75,3)--(6,2.5);
\draw[dotted](6.25,2)--(6,2.5);
\end{tikzpicture}
\caption{Four types of Hasse diagrams}\label{fourHasse}
\end{figure}
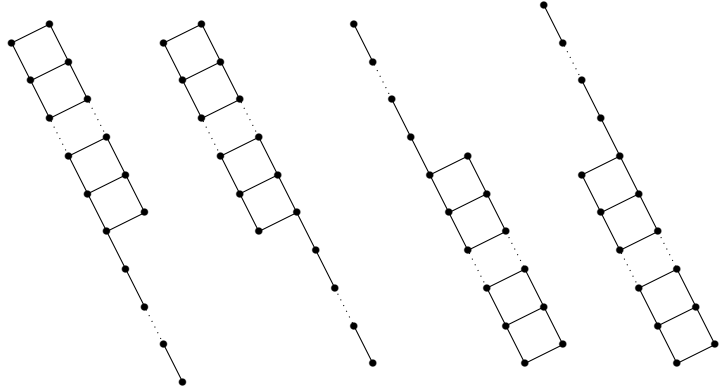

Let $X$ be a poset. The \emph{incidence algebra} $I(X)$ of $X$ with coefficients in ${\bf k}$ is the algebra with ${\bf k}$-basis $\{e_{xx'}|x\leq x'\in X\},$ whose multiplication is defined by
$$e_{xx'}e_{x'x''}=e_{xx''}$$
for $x\leq x'\leq x''\in X$ (all other products are zero).

The poset $X$ carries a structure of a topological space by defining the closed sets to be the subsets $Y\subseteq X$ satisfying that: if $y\in Y$ and $y'\leq y$, then $y'\in Y$.

Let $Y\subseteq X$ be a closed subset and $U$ be its complement. Then $X$ is the disjoint union of $Y$ and $U$. Let $A_Y$ be the algebra with the elements $\{e_{yy'}|y\leq y'\}\cup\{e_{u'u}|u'\leq u\}\cup\{e_{uy}|y<u\}$ as a $\bf{k}$-basis, where $y,y'\in Y$, $u,u'\in U$, and the multiplication is defined by
$$\begin{array}{lcl}e_{yy'}e_{y'y''}=e_{yy''}&&e_{u''u'}e_{u'u}=e_{u''u}\\
e_{uy}e_{yy'}=\left\{\begin{array}{lcl}e_{uy'}&&y'<u\\
0&&\textup{otherwise}\end{array}\right.&&e_{u'u}e_{uy}=\left\{\begin{array}{lcl}e_{u'y}&&y<u'\\
0&&\textup{otherwise}\end{array}\right.
\end{array}$$
for $y\leq y'\leq y''\in Y$, $u''\leq u'\leq u\in U$ (all other products are zero).

The following powerful result due to Ladkani plays an important role for the derived equivalences between incidence algebras and other algebras.

\begin{prop}\cite[Corollary 4.6]{Lad2008}\label{Ladkani} Keep notations as above. Then we have
$$\textup{D}^b(I(X))\simeq\textup{D}^b(A_Y).$$
\end{prop}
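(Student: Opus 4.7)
The plan is to invoke Rickard's theorem by exhibiting a tilting complex $T\in \textup{D}^{b}(I(X))$ whose endomorphism algebra is isomorphic to $A_Y$. I would identify $\mod I(X)$ with the category of sheaves on $X$ equipped with the Alexandrov topology (whose closed sets are precisely the down-closed subsets of $X$). Under this identification, the closed inclusion $i\colon Y\hookrightarrow X$ and the open complement $j\colon U\hookrightarrow X$ produce the standard recollement
\[
\textup{D}^{b}(I(Y))\rightleftarrows \textup{D}^{b}(I(X))\rightleftarrows \textup{D}^{b}(I(U)),
\]
and the key asymmetry to exploit is that the right adjoint $j_*$, applied to injectives of $I(U)$, produces $I(X)$-modules whose internal $\Hom$-pattern realises the \emph{opposite} poset structure on $U$.

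Concretely, I would set $T=T_Y\oplus T_U$, where $T_Y:=\bigoplus_{y\in Y}P_y$ is the sum of indecomposable projectives of $I(X)$ at the points of the closed set $Y$, and $T_U:=\bigoplus_{u\in U}j_*(I_u)$ with $I_u$ the indecomposable injective of $I(U)$ at $u$. Under the Alexandrov-sheaf dictionary, $j_*(I_u)$ has nonzero stalk at $u'\in U$ precisely when $u'\leq u$ (the reversed order on $U$) and nonzero stalk at $y\in Y$ precisely when $y<u$; this already matches the three families of generators $e_{yy'},\ e_{u'u},\ e_{uy}$ listed in the definition of $A_Y$.

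To verify the tilting conditions, the vanishing of $\Hom^n(T,T)$ for $n\neq 0$ splits into three blocks. The $T_Y$-block reduces to the standard $\Ext$-vanishing among projectives of $I(Y)$ after transport through the fully faithful exact functor $i_*$. For the $T_U$-block, $j^*$ is exact so $j_*$ preserves injectives, yielding $\Hom^{n>0}(T_U,T_U)=0$; meanwhile the adjunction $\Hom(j_*A,j_*B)\cong \Hom(j^*j_*A,B)=\Hom(A,B)$ (using $j^*j_*=\mathrm{id}$ for an open immersion) reduces the degree-zero part to $\Hom_{I(U)}(I_{u'},I_u)$, which is one-dimensional exactly when $u'\leq u$ and realises the opposite-poset composition rule $e_{u''u'}e_{u'u}=e_{u''u}$. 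The mixed block $\Hom^{\bullet}(T_U,T_Y)$ is computed by an explicit stalk-wise model for $j_*(I_u)$ paired against $P_y$; it contributes a one-dimensional space in degree zero exactly when $y<u$, producing $e_{uy}$, while $\Hom^{\bullet}(T_Y,T_U)=0$ follows from support considerations since $P_y$ is supported on the down-closure of $y$ inside $Y$. Generation of $\textup{D}^{b}(I(X))$ by $T$ is then a formal consequence of the recollement: $T_Y$ generates the essential image of $i_*$, and $T_U$ together with $T_Y$ recovers the image of $j_*$.

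The principal obstacle is the mixed $\Hom$-computation --- concentrating $\Hom^{\bullet}(T_U,T_Y)$ in degree zero with exactly one-dimensional contribution per pair $y<u$, and checking that the resulting basis elements satisfy the mixed multiplication rules in the definition of $A_Y$. This forces one to pin down a workable model for $j_*(I_u)$ (for instance via a cosimplicial resolution along the upper set generated by $u$) and to match its stalks against $P_y$ vertex by vertex. Once this is settled, assembling the three blocks produces an algebra map $A_Y\to \End(T)$ whose bijectivity on the prescribed ${\bf k}$-basis closes the argument via Rickard's theorem.
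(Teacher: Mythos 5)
There is a genuine gap here, and it is located exactly at the step you defer to the end as the ``principal obstacle''. First, your $T_U$ is not the flexible object you treat it as: since $j^*$ is exact, its right adjoint $j_*$ preserves injectives, and the socle computation $\Hom_{I(X)}(S_x,j_*I_u)\cong\Hom_{I(U)}(j^*S_x,I_u)$ shows that $j_*I_u$ is injective with simple socle $S_u$; hence $j_*I_u\cong I_u^X$, the indecomposable injective $I(X)$-module at $u$. So your proposed tilting object is simply $\bigoplus_{y\in Y}P_y\oplus\bigoplus_{u\in U}I^X_u$, a module, and there is no ``model for $j_*(I_u)$'' left to pin down. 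Second, your two support statements contradict each other: in a convention where $P_y$ is the interval module on the down-set of $y$ (as you assert), $I^X_u$ is the interval module on the \emph{up-set} of $u$, so the claimed stalk pattern of $j_*I_u$ (nonzero at $u'\le u$ and at $y<u$) is false; in the opposite convention the stalk pattern holds, but then $P_y$ is supported on the up-set of $y$, which meets $U$, and $\Hom(P_y,j_*I_u)\cong\Hom(j^*P_y,I_u)\cong D\bigl((P_y)_u\bigr)\neq 0$ for every $y<u$, destroying the asserted vanishing $\Hom^{\bullet}(T_Y,T_U)=0$: the one-dimensional mixed Homs exist, but they point in the wrong direction for $A_Y$. (A related slip: the $U$-block of $A_Y$ is isomorphic to $I(U)$ itself, with no internal order reversal; only the connecting bimodule between the two blocks is dualized, which is precisely the part your construction gets wrong.)

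Concretely, fix the convention you state (projectives on down-sets, so $\End(\bigoplus_xP_x)\cong I(X)$) and take $X=\{y_1<y_2<u\}$, $Y=\{y_1,y_2\}$, $U=\{u\}$. Then $j_*I_u\cong I^X_u=S_u$, and from $0\to P_{y_2}\to P_u\to S_u\to 0$ one gets $\Ext^1(S_u,P_{y_2})\cong{\bf k}\neq 0$, so $T$ has self-extensions and is not tilting; moreover $\Hom^{\bullet}(S_u,P_{y_1})=0$, so the basis element $e_{uy_1}$ of $A_Y$ has no counterpart in $\End(T)$, which decomposes as ${\bf k}\vec{A}_2\times{\bf k}$ rather than the connected algebra $A_Y$. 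The failure is structural, not repairable by adjusting $T_U$: requiring $\Hom^{\bullet}(P_y,T_u)=0$ for all $y\in Y$ forces the homology of $T_u$ to be supported on $U$, and in this (hereditary) example every such object is a direct sum of shifts of $S_u$, all of which satisfy $\Hom^{\bullet}(-,P_{y_1})=0$; hence no choice of $U$-indexed summands can create the required arrows while keeping $T_Y=\bigoplus_{y\in Y}P_y$. Note also that the paper itself gives no proof of this proposition; it is quoted from Ladkani, whose tilting complex is built from genuine two-term complexes (the equivalence is a generalized APR/BGP reflection) and is necessarily of a different shape from the ``projectives on $Y$ plus $j_*$ of injectives on $U$'' object you propose.
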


\subsection{Quiver representations and path algebras}

Let $A$ be a basic and connected finite dimensional ${\bf k}$-algebra. By Gabriel's theorem, there exists a finite connected quiver $Q$ and an admissible ideal $\rho$ such that $A={\bf k}Q/(\rho)$. A quiver $Q$ with an admissible ideal $\rho$ is called a bound quiver, and just denoted by $Q$ without confusion. We say that $A$ is the algebra associated to the bound quiver $Q$. It is obvious that the bound quivers in Figure \ref{algebraquiver} correspond to the incidence algebras of posets in Figure \ref{fourHasse}. Denoted by $A(u)$, $A(u)_v$, $A(u)^v$, $_vA(u)$, $^vA(u)$ the associated path algebras of the bound quivers $Q(u)$, $Q(u)_v$, $Q(u)^v$, $_vQ(u)$ and $^vQ(u)$, respectively.

Let $\textup{mod}\mbox{-}A$ be the category of finitely generated left $A$-modules. We choose a complete set $P_1,\cdots,P_n$ of representatives from the isomorphism classes of indecomposable projective left $A$-modules. The \emph{Cartan matrix} $C=C_{A}$ is defined to be the integer-valued $n\times n$-matrix with entries
$$c_{ij}=\textup{dim}_{\bf k}\textup{Hom}_A(P_i,P_j)\;\;\textup{for}\;1\leq i,j\leq n.$$
The entries of the Cartan matrix can also be interpreted as the dimensions of a two-sided Pierce decomposition of $A$. For this let $e_1,\cdots,e_n$ be a complete system of primitive orthogonal idempotents of $A$ such that $P_i=Ae_i$. Let $A=\oplus_{i,j}e_iAe_j$ be the two-sided Pierce decomposition with respect to $e_1,\cdots,e_n$. Then $c_{ij}=\textup{dim}_{\bf k}e_iAe_j$.

Let $K_0(A)$ denote the Grothendieck group of $A$, then $K_0(A)\simeq\mathbb{Z}^n$. We consider the elements $x=(x_1,\cdots,x_n)\in\mathbb{Z}^n$ as row vectors. The transpose of a vector $x\in\mathbb{Z}^n$ or a matrix $M$ is denoted by $x^t$ or $M^t$. If $\textup{gl.dim.}\;A<\infty$, then the Cartan matrix is invertible over $\mathbb{Z}$. Thus we can define a bilinear form $\left\langle-,-\right\rangle_A$ on $\mathbb{Z}^n$ by $\left\langle x,y\right\rangle_A=xC_A^{-t}y^t$ for $x,y\in\mathbb{Z}^n$. This bilinear form corresponds to the Euler form on $K_0(A)$, which has the following homological interpretation for $X,Y\in\textup{mod}\mbox{-}A$: $$\left\langle[X],[Y]\right\rangle_A=\sum_{i\geq0}\textup{dim}_{\bf k}\textup{Ext}_A^i(X,Y).$$

We denote by $\textup{D}^b(A)$ the bounded derived category of $\textup{mod}\mbox{-}A$. Then we can identify $K_0(A)$ with the Grothendieck group of the triangulated category $\textup{D}^b(A)$, see \cite{Gro1977,Hap1988}. In fact, if $X^{\bullet}=(X^i,d^i)\in\textup{D}^b(A)$ is a bounded complex, then the corresponding element $[X^{\bullet}]$ in $K_0(A)$ is $\sum_i(-1)^i[X^i]$. Using this identification we consider $\left\langle-,-\right\rangle_A$ also as a bilinear form on the Grothendieck group of $\textup{D}^b(A)$.

The \emph{Coxeter matrix} of $A$ is defined to be the matrix $$\Phi_{A}:=-C_{A}^{-t}C_A.$$ We denote by $\chi_A(\lambda)$ the characteristic polynomial of $\Phi_A$ and call it the \emph{Coxeter polynomial} of $A$.
Since $A$ is of finite global dimension, there is a triangle equivalence $\tau=\tau_{\textup{D}^b(A)}$ on $\textup{D}^b(A)$. It follows from the construction in \cite{Hap1988} that the linear transformation induced by $\tau$ on $K_0(A)$ coincides with $\Phi_A$. Thus for a $A$-module $X$ we have in $K_0(A)$ that $[\tau^j X]=x(\Phi_A)^j$ for all $j\in\mathbb{Z}$, where $x=\textup{dim} X$ is the dimension vector of $X$.

\subsection{Tilting objects}

Suppose $\mathcal{A}$ is an abelian category. Denote by $\textup{C}\mathcal{A}$ the category of complexes over $\mathcal{A}$ and by $\textup{H}\mathcal{A}$ the homotopy category. A quasi-isomorphism in $\textup{H}\mathcal{A}$ is a morphism whose image under the homology functors is invertible. Let $\Sigma$ be the class of quasi-isomorphisms in $\textup{H}\mathcal{A}$. We obtain the derived category $\textup{D}\mathcal{A}$\cite{Ver1977}: it is the category of fractions of the homotopy category with respect to the class of quasi-isomorphisms. A complex $I$ (resp. $P$) is \emph{fibrant}(resp. \emph{cofibrant}) if the canonical map
$$\textup{Hom}_{\textup{H}\mathcal{A}}(L,I)\rightarrow\textup{Hom}_{\textup{D}\mathcal{A}}(L,I)\;\textup{resp.}
\;\textup{Hom}_{\textup{H}\mathcal{A}}(P,L)\rightarrow\textup{Hom}_{\textup{D}\mathcal{A}}(P,L)$$
is bijective for each complex $L$.

\begin{lem}\label{fibrant}\cite[Lemma 2.3]{Kel2007}
If $I$ is a left bounded complex with injective components, then $I$ is fibrant.
Dually, if $P$ is a right bounded complex with projective components, then $P$ is cofibrant.
\end{lem}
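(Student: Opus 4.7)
The plan is to first reformulate fibrancy in a workable form. Since $\textup{D}\mathcal{A}$ is the localization of $\textup{H}\mathcal{A}$ at the class of quasi-isomorphisms, and a morphism in $\textup{H}\mathcal{A}$ is a quasi-isomorphism precisely when its cone is acyclic, the long exact sequence of $\textup{Hom}$'s along a distinguished triangle $L'\to L\to C\to L'[1]$ shows that the canonical map $\textup{Hom}_{\textup{H}\mathcal{A}}(L,I)\to\textup{Hom}_{\textup{D}\mathcal{A}}(L,I)$ is bijective for every $L$ if and only if $\textup{Hom}_{\textup{H}\mathcal{A}}(K,I)=0$ for every acyclic complex $K$. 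It therefore suffices to show that whenever $L$ is acyclic and $f\colon L\to I$ is a chain map, $f$ is null-homotopic.

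For this, suppose $I^n=0$ for all $n<N$. The idea is to construct a null-homotopy $h^n\colon L^n\to I^{n-1}$ satisfying $f^n=d_I^{n-1}h^n+h^{n+1}d_L^n$ by induction on $n$, taking $h^n=0$ for all $n\leq N$ (which is forced, since $I^{n-1}=0$ in that range). Given $h^1,\ldots,h^n$, set $g^n:=f^n-d_I^{n-1}h^n\colon L^n\to I^n$. A short computation using the chain-map identity $f^nd_L^{n-1}=d_I^{n-1}f^{n-1}$ together with the inductive relation $f^{n-1}=d_I^{n-2}h^{n-1}+h^nd_L^{n-1}$ shows that $g^n\circ d_L^{n-1}=0$. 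Hence $g^n$ vanishes on $\textup{im}\,d_L^{n-1}=\ker d_L^n$, where the equality uses acyclicity of $L$. Consequently $g^n$ factors through the monomorphism $L^n/\ker d_L^n\hookrightarrow L^{n+1}$ induced by $d_L^n$, and injectivity of $I^n$ extends this factorization to the desired $h^{n+1}\colon L^{n+1}\to I^n$.

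The dual statement for right-bounded complexes of projectives follows by applying the same argument inside $\mathcal{A}^{\textup{op}}$: under the opposite-category equivalence, projective components become injective components, right-bounded becomes left-bounded, and cofibrancy in $\textup{D}\mathcal{A}$ translates precisely into fibrancy in $\textup{D}(\mathcal{A}^{\textup{op}})$. I do not expect any substantive obstacle beyond the reformulation step, which is standard calculus of fractions; once we have reduced to the null-homotopy assertion on acyclic complexes, the inductive construction is entirely classical and mirrors the proof that bounded-below complexes of injectives admit lifts along quasi-isomorphisms.
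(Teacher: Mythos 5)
Your proof is correct and is essentially the standard argument used in the cited source (Keller's Lemma 2.3), which the paper itself does not reprove: you reduce fibrancy to the vanishing of $\textup{Hom}_{\textup{H}\mathcal{A}}(K,I)$ for acyclic $K$ via calculus of fractions, and then build the null-homotopy inductively using left-boundedness and injectivity of the components, with the dual case handled in $\mathcal{A}^{\mathrm{op}}$. No gaps; this matches the intended proof.
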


Let $\mathcal{T}$ be a triangulated ${\bf k}$-category and $[1]$ be the corresponding translation functor. An indecomposable object $E$ in $\mathcal{T}$ is called \emph{exceptional} if $\textup{Hom}_{\mathcal{T}}(E,E)\cong\bf{k}$ and $\textup{Hom}_{\mathcal{T}}(E,E[i])=0$ for any $i\neq0$. A sequence $(E_1,\cdots,E_n)$ of exceptional objects in $\mathcal{T}$ is called an \emph{exceptional sequence} if $\textup{Hom}_{\mathcal{T}}(E_t,E_s[i])=0$ for any $1\leq s< t \leq n$ and $i\in\mathbb{Z}$; and it is further called a \emph{strongly exceptional sequence} if additionally $\textup{Hom}_{\mathcal{T}}(E_s,E_t[i])=0$ for any $1\leq s< t \leq n$ and $i\neq 0$.

\begin{lem}\label{tilt+equiv}\cite[Section 6]{Bon1989}
Let $\mathcal{A}$ be an abelian ${\bf k}$-category and let $(E_1,\cdots,E_n)$ be a strongly exceptional sequence which generates $\textup{D}^{b}(\mathcal{A})$. Set $T=\oplus_{s=1}^n E_s$. Then $T$ is a tilting object in $\textup{D}^{b}(\mathcal{A})$ and the functor
$$\textup{{\bf R}Hom}(T,-):\;\textup{D}^{b}(\mathcal{A})\rightarrow\textup{D}^{b}
(\textup{End}_{\textup{D}^{b}(\mathcal{A})}(T))$$
is a triangulated equivalence.
\end{lem}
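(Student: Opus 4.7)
The plan is to verify that $T = \bigoplus_{s=1}^n E_s$ is a tilting object in $\textup{D}^b(\mathcal{A})$ --- meaning $\textup{Hom}_{\textup{D}^b(\mathcal{A})}(T,T[i]) = 0$ for all $i \neq 0$, together with the generation hypothesis --- and then to apply the standard derived Morita machinery to obtain the equivalence with $\textup{D}^b(\textup{End}_{\textup{D}^b(\mathcal{A})}(T))$.

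First I would split the self-Hom into three cases. Expanding
\[
\textup{Hom}_{\textup{D}^b(\mathcal{A})}(T,T[i]) \;=\; \bigoplus_{s,t=1}^n \textup{Hom}_{\textup{D}^b(\mathcal{A})}(E_s, E_t[i]),
\]
the diagonal $s=t$ vanishes for $i\neq 0$ by exceptionality of each $E_s$; the strictly upper triangular part $s<t$ vanishes for $i\neq 0$ by the strongly exceptional hypothesis; and the strictly lower triangular part $s>t$ vanishes for \emph{all} $i\in\mathbb{Z}$ by the exceptional sequence condition. Summing these establishes the required Hom-vanishing, while the generation of $\textup{D}^b(\mathcal{A})$ is part of the hypothesis.

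Next I would realize the derived functor concretely using Lemma \ref{fibrant}: replace $X \in \textup{D}^b(\mathcal{A})$ by a left-bounded injective (fibrant) resolution --- or dually replace $T$ by a bounded-above projective (cofibrant) resolution --- so that $\textup{{\bf R}Hom}(T,X)$ is represented by an honest complex of right $\textup{End}_{\textup{D}^b(\mathcal{A})}(T)$-modules. The Hom-vanishing above identifies $\textup{{\bf R}Hom}(T,T)$ with $\textup{End}_{\textup{D}^b(\mathcal{A})}(T)$ concentrated in degree zero. A devissage argument then extends full faithfulness from $T$ to arbitrary pairs of objects: fixing $Y$, the full subcategory of those $X$ for which the comparison map is an isomorphism is triangulated, closed under shifts and direct summands, and contains $T$; hence by the generation hypothesis it exhausts $\textup{D}^b(\mathcal{A})$. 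Iterating in the second variable yields full faithfulness throughout. Essential surjectivity follows because the essential image is a triangulated subcategory closed under summands and contains the free module $\textup{End}_{\textup{D}^b(\mathcal{A})}(T)$, which is itself a generator of the target.

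The step I expect to be the main obstacle is the devissage: propagating an isomorphism on the single pair $(T,T[i])$ to \emph{all} pairs in $\textup{D}^b(\mathcal{A})$ requires a careful induction via the five lemma along distinguished triangles, and one must track boundedness so that the resulting equivalence lands inside the \emph{bounded} derived category rather than its unbounded cousin. This is precisely the point where Lemma \ref{fibrant} does the heavy lifting, by providing well-behaved models that legitimize the point-set computations of $\textup{{\bf R}Hom}$ underlying the whole argument.
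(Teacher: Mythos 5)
The paper offers no argument for this lemma at all: it is quoted verbatim from Bondal \cite[Section 6]{Bon1989}, so there is no internal proof to compare against. What you have written is essentially the standard proof that lies behind that citation. Your Hom-vanishing bookkeeping is exactly right (diagonal terms by exceptionality, $s<t$ terms by strong exceptionality, $s>t$ terms by the exceptional-sequence condition), and the two-variable d\'evissage for full faithfulness --- fix $Y$, show the objects $X$ for which the comparison map is an isomorphism form a thick triangulated subcategory containing $T$, then iterate in the second variable --- is the standard and correct way to propagate the isomorphism from the single pair $(T,T[i])$ to all of $\textup{D}^b(\mathcal{A})$, using that $\textup{D}^b(\mathcal{A})$ is idempotent complete.

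The one step that is genuinely underjustified is essential surjectivity. You assert that the free module $B=\textup{End}_{\textup{D}^b(\mathcal{A})}(T)$ ``is itself a generator of the target'', but the thick subcategory of $\textup{D}^b(\textup{mod}\mbox{-}B)$ generated by $B$ is only the category of perfect complexes; it equals all of $\textup{D}^b(\textup{mod}\mbox{-}B)$ precisely when $B$ has finite global dimension. That is true here, but it needs an argument: since $\textup{Hom}(E_t,E_s)=0$ for $s<t$ and $\textup{End}(E_s)\cong{\bf k}$, the algebra $B$ is a directed (triangular) finite-dimensional algebra, hence of finite global dimension --- and this in turn uses Hom-finiteness of $\textup{D}^b(\mathcal{A})$, which is not literally part of the stated hypotheses but holds in all of the paper's applications. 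Relatedly, your construction of $\textup{{\bf R}Hom}(T,-)$ as an honest triangulated functor via Lemma \ref{fibrant} presupposes enough injectives (or projectives, or functorial resolutions) in $\mathcal{A}$; again this is harmless in the settings where the lemma is used (module categories of finite-dimensional algebras), but for an arbitrary abelian ${\bf k}$-category one would have to either add such hypotheses or pass to a DG/enhanced formulation, which is what Bondal's and Keller's treatments do. With those two points patched, your proof is complete.
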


\subsection{Weighted projective lines and the category of coherent sheaves}

In this paper, we only consider the weighted projective line of three weights. Let $p_1,p_2,p_3\geq2$ be integers, called weights. Denote by $S$ the commutative algebra
$$S=\frac{{\bf k}[X_1,X_2,X_3]}{(X_1^{p_1}+X_2^{p_2}+X_3^{p_3})}={\bf k}[x_1,x_2,x_3].$$
Let $\mathbb{L}=\mathbb{L}(p_1,p_2,p_3)$ be the abelian group given by generators $\vec{x}_1,\vec{x}_2,\vec{x}_3$ and defining relations $p_1\vec{x}_1=p_2\vec{x}_2=p_3\vec{x}_3:=\vec{c}$. The $\mathbb{L}$-graded algebra $S$ is the appropriate object to study the triangle singularity $x_1^{p_1}+x_2^{p_2}+x_3^{p_3}$. The element $\vec{c}$ is called the \emph{canonical element}. Each element $\vec{x}\in\mathbb{L}$ can be written in \emph{normal form} $\vec{x}=n_1\vec{x}_1+n_2\vec{x}_2+n_3\vec{x}_3+m\vec{c}$ with unique $n_i,m\in\mathbb{Z}$, $0\leq n_i<p_i$. The algebra $S$ is $\mathbb{L}$-graded by setting $\textup{deg}\;x_i=\vec{x}_i\,(i=1,2,3)$.

Let $\mathbb{X}=\mathbb{X}(p_1,p_2,p_3)$ denote the weighted projective line of weight type $(p_1,p_2,p_3)$. By an $\mathbb{L}$-graded version of the Serre construction \cite{Ser1955}, $\mathbb{X}$ is given by its category of coherent sheaves $\textup{coh}\mbox{-}\mathbb{X}=\textup{mod}^{\mathbb{L}}\mbox{-}S/\textup{mod}_0^{\mathbb{L}}\mbox{-}S$, the quotient category of finitely generated $\mathbb{L}$-graded modules modulo the Serre subcategory of graded modules of finite length. The abelian group $\mathbb{L}$ is ordered by defining the positive cone $\{\vec{x}\in\mathbb{L}|\vec{x}\geq0\}$ to consist of the elements of the form $n_1\vec{x}_1+n_2\vec{x}_2+n_3\vec{x}_3$, where $n_1,n_2,n_3\geq0$.

The image $\mathcal{O}$ of $S$ in $\textup{mod}^{\mathbb{L}}\mbox{-}S/\textup{mod}_0^{\mathbb{L}}\mbox{-}S$ serves as the structure sheaf of $\textup{coh}\mbox{-}\mathbb{X}$, and $\mathbb{L}$ acts on the above data, in particular on $\textup{coh}\mbox{-}\mathbb{X}$, by grading shift. Each line bundle has the form $\mathcal{O}(\vec{x})$ for a uniquely determined $\vec{x}$ in $\mathbb{L}$. Defining the \emph{dualizing element} of $\mathbb{L}$ as $\vec{\omega}=\vec{c}-(\vec{x}_1+\vec{x}_2+\vec{x}_3)$, the category $\textup{coh}\mbox{-}\mathbb{X}$ satisfies Serre duality in the form $D\textup{Ext}^1(X,Y)=\textup{Hom}(Y,X(\vec{\omega}))$, functorially in $X$ and $Y$. Moreover, Serre duality implies the existence of almost split sequences for $\textup{coh}\mbox{-}\mathbb{X}$ with the Auslander-Reiten translation $\tau$ given by the shift with $\vec{\omega}$.

\subsection{Stable category of vector bundles}

Denote by $\textup{vect}\mbox{-}\mathbb{X}$ the full subcategory of the category $\textup{coh}\mbox{-}\mathbb{X}$ formed by all vector bundles. The category $\textup{vect}\mbox{-}\mathbb{X}$ carries the structure of a Frobenius category such that the system $\mathcal{L}$ of all line bundles is the system of all indecomposable projective-injectives: a sequence $\eta$: $0\rightarrow E'\rightarrow E\rightarrow E''\rightarrow 0$ in $\textup{vect}\mbox{-}\mathbb{X}$ is \emph{distinguished exact} if all the sequence $\textup{Hom}(L,\eta)$ with $L$ a line bundle are exact. Accordingly, the stable category $\textup{\underline{vect}}\mbox{-}\mathbb{X}=\textup{vect}\mbox{-}\mathbb{X}/[\mathcal{L}]$ is triangulated by \cite{Hap1988}. Moreover, it follows that the triangulated category $\textup{\underline{vect}}\mbox{-}\mathbb{X}$ is Krull-Schmidt with Serre duality $\mathbb{S}$ induced from that of $\textup{coh}\mbox{-}\mathbb{X}$.

For each line bundle $L$, the extension term of the almost split sequence
$$0\rightarrow L(\vec{\omega})\rightarrow E_L\rightarrow L\rightarrow 0$$
is called the \emph{Auslander bundle} corresponding to $L$. More generally, we consider \emph{extension bundles} $E_L\langle\vec{x}\rangle$ defined as the extension terms of the `unique' non-split exact sequence
$$0\rightarrow L(\vec{\omega})\rightarrow E_L\langle\vec{x}\rangle\rightarrow L(\vec{x})\rightarrow 0,$$
where $0\leq\vec{x}\leq2\vec{\omega}+\vec{c}$. For the sake of simplicity, we denote $E_{\mathcal{O}}\langle\vec{x}\rangle$ by $E\langle\vec{x}\rangle$. Since we are dealing with the case of three weights $p_i\geq2\,(i=1,2,3)$, each extension bundle, in particular each Auslander bundle, is exceptional in $\textup{\underline{vect}}\mbox{-}\mathbb{X}$ by \cite[Corollary 4.11]{KLM2013}.

\begin{lem}\cite[Corollary 4.14]{KLM2013}\label{414}
Let $E$ be the Auslander bundle and $\vec{x}\in\mathbb{L}$. Then $\textup{Hom}_{\underline{\textup{vect}}\mbox{-}\mathbb{X}}(E,E(\vec{x}))\neq0$ if and only if $\vec{x}\in\{0,\vec{x}_1+\vec{\omega},\vec{x}_2+\vec{\omega},\vec{x}_3+\vec{\omega}\}$, and in this case $\textup{Hom}_{\underline{\textup{vect}}\mbox{-}\mathbb{X}}(E,E(\vec{x}))$ is isomorphic to ${\bf k}$.
\end{lem}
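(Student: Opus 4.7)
The plan is to compute the ambient Hom space $\Hom_{\vect\bbX}(E,E(\vx))$ and then to quotient out the subspace $P(E,E(\vx))$ of morphisms factoring through line bundles (the projective-injectives of the Frobenius structure), so as to recover $\underline{\Hom}(E,E(\vx)):=\Hom_{\underline{\textup{vect}}\mbox{-}\bbX}(E,E(\vx))$. The essential inputs are the two short exact sequences
\begin{equation*}
(\Delta)\colon\ 0\to\co(\vw)\to E\to\co\to 0,\qquad
(\Delta_{\vx})\colon\ 0\to\co(\vw+\vx)\to E(\vx)\to\co(\vx)\to 0,
\end{equation*}
the identification $\Hom(\co(\vec{a}),\co(\vec{b}))\cong S_{\vec{b}-\vec{a}}$ (nonzero iff $\vec{b}-\vec{a}$ lies in the positive cone of $\bbL$, with dimension read from its normal form), and Serre duality $D\Ext^1(F,G)\cong\Hom(G,F(\vw))$ on $\coh\bbX$.

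First, applying $\Hom(E,-)$ to $(\Delta_{\vx})$ and, in each resulting slot, $\Hom(-,\co(\vy))$ to $(\Delta)$, I express $\dim_{\bf k}\Hom_{\vect\bbX}(E,E(\vx))$ as a signed combination of graded-piece dimensions $\dim_{\bf k}S_{\vz}$ for $\vz$ in a short explicit list (some entries arising through Serre duality). Each such $\dim_{\bf k}S_{\vz}$ is determined by the normal form $\vz=n_1\vx_1+n_2\vx_2+n_3\vx_3+m\vc$, making this step a finite combinatorial calculation.

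Next, a case analysis of the composites $\co(\vw)\hookrightarrow E\xrightarrow{h}L\xrightarrow{g}E(\vx)\twoheadrightarrow\co(\vx)$, using the uniqueness of the line-bundle quotient $\co$ of $E$ in $(\Delta)$ and of the line-bundle sub $\co(\vw+\vx)$ of $E(\vx)$ in $(\Delta_{\vx})$, shows that $P(E,E(\vx))$ is spanned by morphisms factoring as $E\twoheadrightarrow\co\to E(\vx)$ or as $E\to\co(\vw+\vx)\hookrightarrow E(\vx)$. Both subspaces appear as images of natural maps from $\Hom(\co,E(\vx))$ and $\Hom(E,\co(\vw+\vx))$, whose dimensions are again controlled by graded pieces of $S$ computed in the previous paragraph.

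Subtracting the two dimensions case-by-case proves the statement. The case $\vx=0$ is immediate since $E$ is exceptional (stated just before the lemma). For $\vx=\vx_i+\vw$ ($i=1,2,3$), a direct check shows that exactly one class survives the quotient, giving the claimed one-dimensional space. For every other $\vx$ the combinatorics force either $\Hom_{\vect\bbX}(E,E(\vx))$ already to vanish, or the total Hom space to be absorbed entirely into $P(E,E(\vx))$. The principal obstacle is the bookkeeping in the previous paragraph: carefully tracking the connecting $\Ext^1$-terms at boundary values of $\vx$ where several of the relevant $S_{\vz}$ are simultaneously nonzero, and verifying that no ``mixed'' morphism factoring through some intermediate line bundle $L\neq\co,\co(\vw+\vx)$ contributes independently to $P(E,E(\vx))$ beyond the two canonical factorisations above.
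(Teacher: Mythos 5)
The paper does not actually prove this lemma: it is imported verbatim as \cite[Corollary 4.14]{KLM2013}, so the only question is whether your sketch would stand on its own as a replacement proof, and as written it does not. The decisive step is the one you yourself flag as ``the principal obstacle'' and then leave unverified: the claim that $P(E,E(\vx))$, the space of maps factoring through sums of line bundles, is spanned by the two canonical families $E\twoheadrightarrow\co\to E(\vx)$ and $E\to\co(\vw+\vx)\hookrightarrow E(\vx)$. For a factorization $E\xrightarrow{h}L\xrightarrow{g}E(\vx)$ in which $h|_{\co(\vw)}\neq 0$ and the composite $L\to E(\vx)\twoheadrightarrow\co(\vx)$ is nonzero (which happens for every line bundle $L=\co(\vec{y})$ with $\vw\leq\vec{y}\leq\vx$, hence for a large range of $\vx$), neither reduction applies, and you give no argument that such mixed maps lie in the span of the two canonical families. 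Since your computation of the stable Hom is $\dim\Hom_{\vect\bbX}(E,E(\vx))-\dim P(E,E(\vx))$, any undercount of $P$ inflates the answer and destroys precisely the ``only if'' direction of the lemma; note also that the correct general criterion in a Frobenius category is factorization through a fixed inflation of $E$ into a projective-injective (or a fixed deflation onto $E(\vx)$), not through the quotient line bundle of $E$ or the sub line bundle of $E(\vx)$, so the asserted reduction is not a standard fact one may simply invoke.

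A second, related gap: the statement quantifies over all $\vx\in\bbL$, an infinite set, and $\dim\Hom_{\vect\bbX}(E,E(\vx))$ grows without bound as $\vx$ increases, so ``subtracting the two dimensions case-by-case'' is not a finite combinatorial verification. One needs a uniform structural argument showing that for all but the four listed values of $\vx$ the entire Hom space is absorbed by $P(E,E(\vx))$ (for instance a reduction to a fundamental domain such as $0\leq\vx\leq 2\vw+\vc$ together with the vanishing results for extension bundles). This is exactly the machinery developed in \cite{KLM2013} leading to their Corollary 4.14; without it, your outline computes the right kind of quantities but does not close either direction of the equivalence.
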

\subsection{One-point extension}

Let $M$ be a left $A$-module. The \emph{one-point extension} $A[M]$ of $A$ by $M$ is defined as the triangular matrix ring
$$A[M]=\begin{bmatrix}A&M\\0&{\bf k}\end{bmatrix}$$
with multiplication given by
$$\begin{pmatrix}a&m\\0&\alpha\end{pmatrix}\begin{pmatrix}a'&m'\\0&\alpha'\end{pmatrix}=\begin{pmatrix}aa'& am'+m\alpha\\0&\alpha\alpha'\end{pmatrix}$$
for $a,a'\in A$, $m,m'\in M$ and $\alpha,\alpha'\in {\bf k}$.

If $A$ has finite global dimension, we trivially have that $A[M]$ has finite global dimension. In fact, $\textup{gl.dim}\;A[M]\leq\textup{gl.dim}\;A+1$.

For one-point extension algebra, we have following useful results.

\begin{prop}\cite[Theorem 3.2]{Hap2009}\label{Hap 1-ex}
Let $A$ be a finite-dimensional algebra of finite global dimension and let $M\in\textup{mod}\mbox{-}A$. Let $B=A[M]$ be the one-point extension algebra. Let $\chi_{A}(\lambda)=\sum_{i=0}^n a_i\lambda^{n-i}$ and $\chi_{B}(\lambda)=\sum_{i=0}^{n+1}b_i\lambda^{n+1-i}$ be the Coxeter polynomials. Then
$$b_i=a_i-a_{i-1}(\left\langle M,M\right\rangle_A-1)-\sum_{j=2}^i a_{i-j}\left\langle \tau^{j-1}M,M\right\rangle_A,$$
where we set $a_{-1}=a_{n+1}=0$.
\end{prop}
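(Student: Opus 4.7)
The plan is to exploit the derived equivalences established in this paper to reduce the Coxeter-polynomial computation for $N(n,r)$ to one on the one-branch extension $A(u)_v$, and then to build the latter inductively from $A(u)$ by applying Proposition \ref{Hap 1-ex}. Given $n,r$ with $2r\geq n+2$, set $u = n-r+1$ and $v = 2r-n-2\geq 0$, so that $n = 2u+v$, $r = u+v+1$, and in particular $n-r = u-1$; this last identity already explains why the stated case split is organised by $n-r \pmod{3}$. By Theorem \ref{theorem} (together with the explicit tilting-complex realisation announced for Section 5), $\textup{D}^b(N(2u+v,\,u+v+1))\simeq \textup{D}^b(A(u)_v)$ for $v\geq 1$, while the boundary $v=0$ is covered by $A(u) = A(u)_0$, derived equivalent to $N(2u,u+1)$ and corresponding to case (1). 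Since the Coxeter polynomial is a derived invariant, it suffices to compute $\chi_{A(u)_v}(\lambda)$ and then substitute the parameters.

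\textbf{Base case ($v=0$).} Because $A(u) = \vec{A}_2 \otimes \vec{A}_u$, the Coxeter matrix $\Phi_{A(u)}$ coincides, up to sign conventions, with $\Phi_{\vec{A}_2}\otimes \Phi_{\vec{A}_u}$, so $\chi_{A(u)}(\lambda)$ factors through the eigenvalues of the two factor matrices. Equivalently, using the realisation of $A(u)$ inside $\underline{\textup{vect}}\mbox{-}\mathbb{X}$ for $\mathbb{X}$ of weight type $(2,3,u+1)$, one reads $\Phi_{A(u)}$ off the action of the shift by $\vec{\omega}$ on the line bundles indexing the tilting summands. Simplifying the resulting rational function, and recognising the factor $\lambda^{3}+1$ as recording the weight $3$ in $\mathbb{X}$, produces the two formulas of case (1); the split on $n-r \pmod 3$ reflects how the cyclotomic factor $\lambda^{u+1}-(-1)^{u}$ interacts with $\lambda^{3}+1$.

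\textbf{Inductive step ($v\geq 1$).} I would extend the branch one vertex at a time, writing $A(u)_v = A(u)_{v-1}[M_v]$ as a one-point extension by an explicit indecomposable $M_v$ supported at the next vertex of the tail. Proposition \ref{Hap 1-ex} then gives a linear recursion for the coefficients of $\chi_{A(u)_v}(\lambda)$ in terms of those of $\chi_{A(u)_{v-1}}(\lambda)$ and the Euler pairings $\langle \tau^{j-1}M_v, M_v\rangle_{A(u)_{v-1}}$. The key observation is that, via the tilting-bundle equivalence, each $M_v$ corresponds to an Auslander or extension bundle in $\underline{\textup{vect}}\mbox{-}\mathbb{X}$; Serre duality (which identifies $\tau$ with the shift by $\vec{\omega}$) together with Lemma \ref{414} then forces $\langle \tau^{j-1}M_v, M_v\rangle$ to vanish outside a short, explicit set of indices, where each contributing term equals $\pm 1$. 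The recursion in $v$ consequently collapses into a geometric-series sum, yielding the three mod-$3$ closed forms of case (2).

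\textbf{Main obstacle.} The delicate part is the inductive step: pinning down precisely which shifts $\tau^{j-1}M_v$ pair nontrivially with $M_v$, and with which signs, for each $v$ and each residue class of $n-r\pmod 3$. Doing this directly inside $\mod\text{-}A(u)_v$ would require a daunting Auslander--Reiten computation; the entire force of transporting the question to $\underline{\textup{vect}}\mbox{-}\mathbb{X}$ is that Lemma \ref{414} replaces this by a finite combinatorial check. Once that list is fixed, the collapse of the Happel recursion into the stated rational expressions with denominator $\lambda^{3}+1$ is mechanical, but still requires careful bookkeeping to match the numerator formulas correctly across the residue classes and across the two cases $2r=n+2$ and $2r\geq n+3$.
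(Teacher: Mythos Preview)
Your proposal targets the wrong statement. The proposition labelled \texttt{Hap 1-ex} is Happel's general formula expressing the Coxeter-polynomial coefficients $b_i$ of a one-point extension $B=A[M]$ in terms of the coefficients $a_i$ of $\chi_A$ and the Euler pairings $\langle \tau^{j-1}M,M\rangle_A$. In the paper this proposition is simply \emph{cited} from \cite[Theorem 3.2]{Hap2009} and is not proved; a proof would have to work at the level of the Cartan/Coxeter matrices of an arbitrary one-point extension, with no reference to Nakayama algebras, weighted projective lines, or the specific algebras $A(u)_v$.

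What you have written is instead a proof sketch for Proposition~\ref{coxeterpolyofNakayama} (the explicit Coxeter polynomials of $N(n,r)$ for $2r\geq n+2$). You set $u=n-r+1$, $v=2r-n-2$, invoke the derived equivalence with $A(u)_v$, and then propose to run an induction on $v$ by ``applying Proposition~\ref{Hap 1-ex}''. That is circular for the stated task: you are using the proposition you were asked to prove as a black box. Even as a strategy for Proposition~\ref{coxeterpolyofNakayama}, your sketch diverges from the paper in the inductive step: the paper only uses Happel's formula once, to pass from $\chi_{A(u)}$ to $\chi_{A(u)_1}$ (via Lemma~\ref{<mf,m>}), and for $v\geq 2$ it switches to the three-term recursion of Proposition~\ref{one-point-extension} (perpendicular calculus), which avoids having to identify $\langle \tau^{j-1}M_v,M_v\rangle$ for every $v$.
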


\begin{prop}\cite[Proposition 18.3 and Corollary 18.2]{Len1997}\label{one-point-extension}
Let $M$ be an exceptional $A$-module and $B=A[M]$ be an one-point extension of $A$ by $M$. Let $M^{\perp}$ be the right perpendicular category. Assume that $M^{\perp}$ is derived equivalent to $\textup{mod}\mbox{-}A'$ for some algebra $A'$.
Then $$\chi_{B}(\lambda)=(1+\lambda)\cdot\chi_A(\lambda)-\lambda\cdot \chi_{A'}(\lambda).$$
\end{prop}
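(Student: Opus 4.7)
The plan is to establish $\chi_B(\lambda) = (1+\lambda)\chi_A(\lambda) - \lambda\chi_{A'}(\lambda)$ by a direct block matrix computation. With the new extension vertex ordered last, the Cartan matrix of $B = A[M]$ reads
$$C_B = \begin{pmatrix} C_A & m^t \\ 0 & 1\end{pmatrix}, \qquad m = \underline{\dim}\, M.$$
Writing $\Phi_B = -C_B^{-t}C_B$ in block form and expanding $\det(\lambda I - \Phi_B)$ by the Schur complement with respect to the bottom-right scalar block (using the Sherman--Morrison rank-one update formula to handle the perturbation $\lambda I - \Phi_A - C_A^{-t}m^tm$ that arises), I obtain
$$\chi_B(\lambda) = (1+\lambda)\chi_A(\lambda) - \lambda \cdot P(\lambda), \qquad P(\lambda) := m\cdot \mathrm{adj}(\lambda I - \Phi_A)\cdot C_A^{-t}m^t.$$
Since $P(\lambda)$ has degree at most $n-1$, the proof reduces to the polynomial identity $P(\lambda) = \chi_{A'}(\lambda)$.

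To pin down $P(\lambda)$, the next step is to exploit the Geigle--Lenzing perpendicular calculus. Exceptionality of $M$ yields a semiorthogonal decomposition $\textup{D}^b(\textup{mod}\mbox{-}A) = \langle \textup{D}^b(M^\perp), M\rangle$, hence a splitting $K_0(A) = K_0(M^\perp)\oplus\mathbb{Z}[M]$. The inclusion $M^\perp\hookrightarrow\textup{mod}\mbox{-}A$ preserves $\mathrm{Ext}$-groups, so the Euler form on $K_0(M^\perp)$ is the restriction of that on $K_0(A)$; under the hypothesized equivalence $\textup{D}^b(M^\perp)\simeq\textup{D}^b(\textup{mod}\mbox{-}A')$ this restriction becomes the Euler form of $A'$. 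Choosing a basis $\{f_1,\dots,f_{n-1},m\}$ of $K_0(A)$ compatible with the splitting, and using $\langle M, f_i\rangle_A = 0$ together with $\langle M, M\rangle_A = 1$, I obtain the adapted block forms
$$C_A = \begin{pmatrix} C_{A'} & 0 \\ v & 1\end{pmatrix}, \qquad C_A^{-t} = \begin{pmatrix} C_{A'}^{-t} & u \\ 0 & 1\end{pmatrix}$$
for a suitable column $u$ and row $v$.

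In this adapted basis, $m = e_n$ and the same block arithmetic yields $\Phi_A = \begin{pmatrix}\Phi_{A'} - uv & -u\\-v & -1\end{pmatrix}$. A Schur-complement expansion of $\chi_A(\lambda) = \det(\lambda I - \Phi_A)$ relative to the bottom-right scalar block (a further rank-one update via Sherman--Morrison) produces
$$\chi_A(\lambda) = \chi_{A'}(\lambda)\bigl(\lambda+1+\lambda g(\lambda)\bigr), \qquad g(\lambda) := v(\lambda I - \Phi_{A'})^{-1}u.$$
Applying the same block-inversion recipe to evaluate $P(\lambda) = e_n^t\,\mathrm{adj}(\lambda I - \Phi_A)\,\begin{pmatrix}u\\1\end{pmatrix}$ (by computing the last row of $(\lambda I - \Phi_A)^{-1}$ and rescaling by $\chi_A(\lambda)$), and performing the parallel Sherman--Morrison simplification, I get $P(\lambda) = \chi_A(\lambda)\cdot\bigl(\lambda+1+\lambda g(\lambda)\bigr)^{-1} = \chi_{A'}(\lambda)$, as required.

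The two computational passes (for $\chi_B$ and for $P$) are essentially straightforward block-matrix bookkeeping, but careful tracking of signs, transposes, and consistent row/column ordering through the iterated Sherman--Morrison updates is essential. The main obstacle is the passage to the adapted basis: one needs the right-perpendicular inclusion to be $\mathrm{Ext}$-preserving (a standard consequence of the semiorthogonal decomposition for exceptional $M$) and then must identify the restricted Euler form with $C_{A'}^{-t}$ using the derived invariance of the Euler form under the hypothesized equivalence. Once the adapted basis is in place, the clean cancellation in the final step is forced by the parallel block structures on both sides.
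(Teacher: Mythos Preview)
The paper does not supply its own proof of this proposition; it is quoted from Lenzing \cite{Len1997} (Proposition 18.3 and Corollary 18.2 there) and used only as a black box in Section~3. So there is no in-paper argument to compare against.

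That said, your strategy is sound and the computations check out. Two remarks on points where a reader might stumble: (i) in the first step the bottom-left block of $\Phi_B$ is $-m\Phi_A$ (not $m$), and it is the identity $\Phi_A(\lambda I-\Phi_A)^{-1}=-I+\lambda(\lambda I-\Phi_A)^{-1}$ that converts the Schur complement into the clean form $\chi_B=(\lambda+1)\chi_A-\lambda P$; (ii) the passage to the adapted basis requires knowing that $P(\lambda)=m\,\mathrm{adj}(\lambda I-\Phi_A)\,C_A^{-t}m^t$ is basis-independent, which holds because $(\lambda I-\Phi_A)^{-1}C_A^{-t}$ transforms as a bilinear form on $K_0(A)$. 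Once these are in place, the Sherman--Morrison cancellation giving $P=\chi_A/(\lambda+1+\lambda g)=\chi_{A'}$ is routine. Lenzing's original treatment packages the same $K_0$-level linear algebra around the semiorthogonal splitting $K_0(A)=K_0(M^\perp)\oplus\mathbb{Z}[M]$, applied once to $M$ in $\mathrm{mod}\,A$ and once to the projective at the extension vertex in $\mathrm{mod}\,B$ (whose right perpendicular is $\mathrm{mod}\,A$); your Schur-complement bookkeeping is an equivalent, if more explicitly matricial, route to the same identity.
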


\section{Coxeter polynomial of $A(u)_v$}

In this section, we will give the formulas of the Coxeter polynomials for the one-branch extension algebra $A(u)_v$ for any $u\geq 1, v\geq 1$.

\subsection{Coxeter polynomial of $A(u)$}

In \cite{HM2014}, Hille-M\"uller have calculated the Coxeter polynomials for the tensor product of path algebras of Dynkin type $\mathbb{A}$. In particular, for $A(u)$ we have:

\begin{prop}\cite[Example 2.8(a)]{HM2014}\label{d=0}
For any $u\geq 1$, the Coxeter polynomial of $A(u)$ is given by
$$\chi_{A(u)}(\lambda)=\left\{\begin{array}{lcl}\frac{(\lambda+1)\big(\lambda^{3u+3}-(-1)^{u+1}\big)}
{(\lambda^3+1)\big(\lambda^{u+1}-(-1)^{u+1}\big)},&&u\not\equiv 2\,(\mod 3),\\\\
\frac{(\lambda+1)\big(\lambda^{u+1}-(-1)^{u+1}\big)^2}{\lambda^3+1},&&u\equiv 2\,(\mod 3).\end{array}\right.$$
\end{prop}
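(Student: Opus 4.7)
The plan is to exploit the tensor decomposition $A(u) = \vec{A}_2 \otimes_{\bf k} \vec{A}_u$ and reduce the Coxeter polynomial to a product over roots of unity. Since the indecomposable projectives of a tensor product of algebras are tensor products of indecomposable projectives, the Cartan matrices satisfy $C_{A(u)} = C_{\vec{A}_2} \otimes C_{\vec{A}_u}$. A short manipulation, inserting this factorization into $\Phi = -C^{-t}C$, then yields
$$\Phi_{A(u)} = -\,\Phi_{\vec{A}_2} \otimes \Phi_{\vec{A}_u},$$
the stray sign reflecting that the $-1$ in the Coxeter matrix appears only once after factoring the two $-C^{-t}C$ contributions. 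Hence the eigenvalues of $\Phi_{A(u)}$ are exactly $\{-\alpha\beta\}$ as $\alpha$ runs over $\operatorname{Spec}(\Phi_{\vec{A}_2})$ and $\beta$ over $\operatorname{Spec}(\Phi_{\vec{A}_u})$.

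Next I would compute these two spectra. For the linear orientation of $\vec{A}_n$, the Cartan matrix is upper unitriangular with ones on and above the diagonal, giving $\chi_{\vec{A}_n}(\lambda) = (\lambda^{n+1}-1)/(\lambda-1)$, whose roots are precisely the non-trivial $(n+1)$-th roots of unity. So the eigenvalues of $\Phi_{\vec{A}_2}$ are the primitive cube roots of unity, and the eigenvalues of $\Phi_{\vec{A}_u}$ are the elements of $\mu_{u+1} \setminus \{1\}$, whence
$$\chi_{A(u)}(\lambda) = \prod_{\beta \in \mu_{u+1}\setminus\{1\}} \prod_{\alpha \in \mu_3 \setminus \{1\}}(\lambda + \alpha\beta).$$

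To collapse this double product, I would invoke the two basic identities $\prod_{\alpha \in \mu_3}(\lambda + \alpha\beta) = \lambda^3 + \beta^3$ and $\prod_{\beta \in \mu_{u+1}}(\lambda + \beta) = \lambda^{u+1} - (-1)^{u+1}$, both immediate from substituting $x = -\alpha\beta$ or $x = -\beta$ into $\prod_{\zeta \in \mu_N}(x-\zeta) = x^N - 1$. After restoring the removed $\alpha = 1$ and $\beta = 1$ factors, this gives
$$\chi_{A(u)}(\lambda) = \frac{\lambda+1}{\lambda^{u+1}-(-1)^{u+1}} \cdot \prod_{\beta \in \mu_{u+1}\setminus\{1\}}(\lambda^3 + \beta^3).$$

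The remaining task, and the only nontrivial step, is to evaluate $\prod_{\beta \in \mu_{u+1}\setminus\{1\}}(\lambda^3 + \beta^3)$, where the behaviour of the cubing map $\beta \mapsto \beta^3$ on $\mu_{u+1}$ dictates two cases. If $\gcd(3,u+1) = 1$, i.e.\ $u \not\equiv 2 \pmod 3$, cubing is a bijection of $\mu_{u+1}$; setting $\delta = \beta^3$ and re-applying the second identity with $\lambda$ replaced by $\lambda^3$ produces $(\lambda^{3u+3}-(-1)^{u+1})/(\lambda^3+1)$, which after division by the denominator yields the first branch. If $3 \mid u+1$, write $u+1 = 3m$; cubing is then $3$-to-$1$ onto $\mu_m$, the product becomes $(\lambda^{3m}-(-1)^m)^3/(\lambda^3+1)$, and the congruence $(-1)^m = (-1)^{3m} = (-1)^{u+1}$ converts it into the second branch. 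The main obstacle throughout is the careful sign bookkeeping that makes $(-1)^{u+1}$ appear uniformly in both branches of the stated formula.
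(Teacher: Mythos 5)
Your argument is correct: the paper does not prove this proposition but simply cites \cite[Example 2.8(a)]{HM2014}, and your route --- $C_{A(u)}=C_{\vec{A}_2}\otimes C_{\vec{A}_u}$, hence $\Phi_{A(u)}=-\Phi_{\vec{A}_2}\otimes\Phi_{\vec{A}_u}$, so the eigenvalues are $-\alpha\beta$ with $\alpha,\beta$ nontrivial third and $(u+1)$-th roots of unity, collapsed via $\prod_{\alpha\in\mu_3}(\lambda+\alpha\beta)=\lambda^3+\beta^3$ and $\prod_{\beta\in\mu_{u+1}}(\lambda+\beta)=\lambda^{u+1}-(-1)^{u+1}$ --- is exactly the standard tensor-product computation underlying that citation. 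The case split according to whether cubing is a bijection on $\mu_{u+1}$ ($\gcd(3,u+1)=1$) or $3$-to-$1$ onto $\mu_m$ with $u+1=3m$, together with the sign identity $(-1)^m=(-1)^{u+1}$, is handled correctly, so your proof is a valid self-contained substitute for the reference.
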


For later use, we need to expand the Coxeter polynomials $\chi_{A(u)}(\lambda)$.
Observe that for any integer $k\geq 1$,
$$\frac{\lambda^{3k}-(-1)^{k}}{\lambda^k-(-1)^{k}}
=\frac{(\lambda^{k})^3-((-1)^{k})^3}{\lambda^k-(-1)^{k}}
=\lambda^{2k}+(-\lambda)^k+1;$$
and
$$\frac{\lambda^{3k}-(-1)^{3k}}{\lambda^3+1}
=\frac{(\lambda^{3})^k-(-1)^{k}}{\lambda^3-(-1)}
=\sum_{j=0}^{k-1}(-1)^{k-1-j}\lambda^{3j}
=(-1)^{k-1}\sum_{j=0}^{k-1}(-\lambda)^{3j}.$$
Therefore, for $u=3i$,
\begin{align}\notag\chi_{A(u)}(\lambda)&=\frac{(\lambda+1)(\lambda^{3u+3}-(-1)^{u+1})}
{(\lambda^3+1)(\lambda^{u+1}-(-1)^{u+1})}\\\notag
&=\frac{\lambda+1}{\lambda^3+1}(\lambda^{2u+2}+(-\lambda)^{u+1}+1)\\\notag
&=\frac{\lambda+1}{\lambda^3+1}[(\lambda^{2u+2}-(-\lambda)^{u+2})+((-\lambda)^{u+2}
+(-\lambda)^{u+1}+(-\lambda)^{u})-((-\lambda)^{u}-1)]\\\notag
&=(\lambda+1)\lambda^{u+2}\cdot\frac{\lambda^{u}-(-1)^{u}}{\lambda^3+1}+(-\lambda)^{u}
-(\lambda+1)\cdot(-1)^u\cdot\frac{\lambda^{u}-(-1)^{u}}{\lambda^3+1}\\\notag
&=(\lambda+1)\lambda^{3i+2}\cdot(-1)^{i-1}\sum_{j=0}^{i-1}(-\lambda)^{3j}+(-\lambda)^{3i}
-(\lambda+1)\cdot(-1)^{3i}\cdot(-1)^{i-1}\sum_{j=0}^{i-1}(-\lambda)^{3j}\\\label{remark1}
&=-(\lambda+1)\cdot\sum_{j=0}^{i-1}(-\lambda)^{3i+2+3j}+(-\lambda)^{3i}+
(\lambda+1)\cdot\sum_{j=0}^{i-1}(-\lambda)^{3j}.
\end{align}
For $u=3i+1$,
\begin{align}\notag\chi_{A(u)}(\lambda)&
=\frac{\lambda+1}{\lambda^3+1}(\lambda^{2u+2}+(-\lambda)^{u+1}+1)\\\notag
&=\frac{\lambda+1}{\lambda^3+1}[(\lambda^{2u+2}-(-\lambda)^{u+3})+((-\lambda)^{u+3}
+(-\lambda)^{u+1}+(-\lambda)^{u-1})-((-\lambda)^{u-1}-1)]\\\notag
&=(\lambda+1)\lambda^{u+3}\cdot\frac{\lambda^{u-1}-(-1)^{u-1}}{\lambda^3+1}
+(-\lambda)^{u-1}\cdot\frac{(\lambda+1)(\lambda^{4}+\lambda^2+1)}{\lambda^3+1}\\\notag
&\qquad-(\lambda+1)\cdot(-1)^{u-1}\cdot\frac{\lambda^{u-1}-(-1)^{u-1}}{\lambda^3+1}\\\notag
&=(\lambda+1)\lambda^{3i+4}\cdot(-1)^{i-1}\sum_{j=0}^{i-1}(-\lambda)^{3j}
+(-\lambda)^{3i}\cdot(\lambda^{2}+\lambda+1)\\\notag
&\qquad-(\lambda+1)\cdot(-1)^{3i}\cdot(-1)^{i-1}\sum_{j=0}^{i-1}(-\lambda)^{3j}\\\label{remark2}
&=-(\lambda+1)\cdot\sum_{j=0}^{i-1}(-\lambda)^{3i+4+3j}+(-\lambda)^{3i}\cdot(\lambda^{2}+\lambda+1)
+(\lambda+1)\cdot\sum_{j=0}^{i-1}(-\lambda)^{3j}.
\end{align}
For $u=3i+2$,
\begin{align}\notag\chi_{A(u)}(\lambda)&=\frac{(\lambda+1)(\lambda^{u+1}-(-1)^{u+1})^2}
{\lambda^3+1}\\\notag
&=(\lambda+1)(\lambda^{3i+3}-(-1)^{3i+3})\frac{\lambda^{3i+3}-(-1)^{3i+3}}{\lambda^3+1}\\\notag
&=(\lambda+1)(\lambda^{3i+3}-(-1)^{3i+3})\cdot (-1)^{i}\sum_{j=0}^i(-\lambda)^{3j}\\\label{remark3}
&=-(\lambda+1)\cdot\sum_{j=0}^{i}(-\lambda)^{3i+3+3j}+(\lambda+1)\cdot\sum_{j=0}^{i}(-\lambda)^{3j}.
\end{align}

\subsection{Euler form between Auslander bundles}

Recall that the path algebra $A(u)$ has a close relationship with the weighted projective line of weight type $(2,3,u+1)$. From now on, we always assume that $\mathbb{X}$ is a weighted projective line of weight type $(2,3,u+1)$. Let $\mathbb{L}(2,3,u+1)$ be the abelian group given by generators $\vec{x}_1,\vec{x}_2,\vec{x}_3$ and defining relations $2\vec{x}_1=3\vec{x}_2=(u+1)\vec{x}_3:=\vec{c}$.

\begin{lem}\label{lemma<>}
Assume $a,b\in\mathbb{Z}$ and $1\leq b\leq 2u+1$. Then in $\mathbb{L}(2,3,u+1)$ we have
\begin{itemize}
\item[(1)] $a\vec{x}_1-b\vec{\omega}=0$ if and only if $3|(u+1)$, $a=(u-5)/3$, $b=u+1$;
\item[(2)] $a\vec{x}_1-b\vec{\omega}=\vec{x}_1$ if and only if $3|(u+1)$, $a=(u-2)/3$, $b=u+1$;
\item[(3)] $a\vec{x}_1-b\vec{\omega}=\vec{x}_2$ if and only if $3|u$, $a=(u-3)/3$, $b=u+1$;
\item[(4)] $a\vec{x}_1-b\vec{\omega}=\vec{x}_3$ if and only if $3|(u+2)$, $a=(u-4)/3$, $b=u+2$.
\end{itemize}
\end{lem}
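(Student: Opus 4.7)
\medskip
\noindent\textbf{Proof proposal.}
The plan is to reduce everything to the uniqueness of the normal form in $\mathbb{L}=\mathbb{L}(2,3,u+1)$. Recalling $\vec{\omega}=\vec{c}-\vec{x}_1-\vec{x}_2-\vec{x}_3$, I first rewrite
$$a\vec{x}_1-b\vec{\omega}=(a+b)\vec{x}_1+b\vec{x}_2+b\vec{x}_3-b\vec{c}\in\mathbb{L}.$$
Dividing $a+b$ by $2$, $b$ by $3$, and $b$ by $u+1$ with remainders, write $a+b=2q_1+r_1$, $b=3q_2+r_2$, $b=(u+1)q_3+r_3$ with $0\leq r_1<2$, $0\leq r_2<3$, $0\leq r_3<u+1$. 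Using $2\vec{x}_1=3\vec{x}_2=(u+1)\vec{x}_3=\vec{c}$, the normal form of $a\vec{x}_1-b\vec{\omega}$ is
$$r_1\vec{x}_1+r_2\vec{x}_2+r_3\vec{x}_3+(q_1+q_2+q_3-b)\vec{c}.$$

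For each of the four targets $\vec{y}\in\{0,\vec{x}_1,\vec{x}_2,\vec{x}_3\}$ (whose normal forms are self-evident), I match coefficients. The crucial step is matching the $\vec{x}_3$-coefficient: in cases (1)--(3) one has $r_3=0$, i.e.\ $(u+1)\mid b$, which together with $1\leq b\leq 2u+1$ forces $b=u+1$; in case (4) one has $r_3=1$, i.e.\ $b\equiv 1\pmod{u+1}$, which together with the range and the forthcoming divisibility by $3$ rules out $b=1$ and leaves $b=u+2$. After $b$ is determined, matching the $\vec{x}_2$-coefficient $r_2$ gives the required congruence on $u$ (namely $3\mid u+1$ in (1)(2), $3\mid u$ in (3), $3\mid u+2$ in (4)), and matching the $\vec{x}_1$-coefficient $r_1$ together with the $\vec{c}$-coefficient equation $q_1+q_2+q_3=b$ pins down $a$ uniquely: substituting $q_2,q_3$ and solving for $q_1=(a+b-r_1)/2$ yields exactly $a=(u-5)/3,\,(u-2)/3,\,(u-3)/3,\,(u-4)/3$ in cases (1)--(4) respectively.

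Conversely, I check that these explicit values of $(a,b)$ (under the stated divisibility hypothesis on $u$) do give normal-form coefficients $(r_1,r_2,r_3)$ equal to those of the prescribed $\vec{y}$ and satisfy $q_1+q_2+q_3=b$; this is a direct arithmetic verification, in particular one checks the parity of $a+b-r_1$ to ensure $q_1\in\mathbb{Z}$ (e.g.\ in (1), $(4u-2)/3$ is even because $u\equiv 2\pmod 3$ makes $(2u-1)/3$ an integer).

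The only delicate point I expect is the bookkeeping for $b$: without the restriction $1\leq b\leq 2u+1$ there would be infinitely many admissible $b$'s (shifted by multiples of $u+1$), so the range hypothesis is what makes the solution unique. Everything else is a mechanical normal-form computation exploiting that $\mathbb{L}$ has a unique representation with bounded digit coefficients in $\vec{x}_1,\vec{x}_2,\vec{x}_3$ and an arbitrary $\mathbb{Z}$-multiple of $\vec{c}$.
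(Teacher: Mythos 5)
Your proposal is correct, and at its core it uses the same reduction as the paper: rewrite $a\vec{x}_1-b\vec{\omega}=(a+b)\vec{x}_1+b\vec{x}_2+b\vec{x}_3-b\vec{c}$ and use the presentation of $\mathbb{L}(2,3,u+1)$ to translate the equation into congruence conditions modulo $2$, $3$, $u+1$ plus one linear relation coming from the $\vec{c}$-coefficient; your normal-form matching and the paper's criterion for a combination $n_1\vec{x}_1+n_2\vec{x}_2+n_3\vec{x}_3+m\vec{c}$ to vanish are the same fact. Where you genuinely diverge is the endgame. You use $(u+1)\mid b$ (resp.\ $b\equiv 1\pmod{u+1}$ in case (4)) together with $1\leq b\leq 2u+1$ to pin down $b=u+1$ (resp.\ $b=u+2$, since $3\mid b$ excludes $b=1$) immediately, and then the $\vec{c}$-coefficient equation determines $a$ by a one-line linear computation, with the condition $2\mid(a+b-r_1)$ serving only as a parity consistency check; all four cases are handled uniformly. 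The paper instead eliminates to the rational identity $\frac{a}{2b}=\frac{u-5}{6(u+1)}$, treats $u=5$ separately, chases divisibility to get $(u-5)\mid 3a$, parametrizes $3a=t(u-5)$ so that $b=t(u+1)$, and only then uses the bound on $b$ to force $t=1$ (it also deduces (2) from (1) by the shift $a\mapsto a-1$, which your uniform treatment does not need). Both arguments are complete; yours is shorter and avoids the special case and the auxiliary parameter, which is a genuine simplification of the arithmetic, not just a cosmetic one.
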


\begin{proof}
(1) If $a\vec{x}_1-b\vec{\omega}=0$, we have $(a+b)\vec{x}_1+b\vec{x}_2+b\vec{x}_3-b\vec{c}=0$. It implies that $2|(b+a)$, $3|b$, $(u+1)|b$ and $b=\frac{b+a}{2}+\frac{b}{3}+\frac{b}{u+1}$. That is, $$\frac{a}{2b}=\frac{1}{2}-\frac{1}{3}-\frac{1}{u+1}=\frac{1}{6}-\frac{1}{u+1}=\frac{u-5}{6(u+1)}. $$ If $u=5$, we get $a=0$ and $6|b$. It implies $b=6$ since $1\leq b\leq 2u+1$. If $u\neq5$, then $$b=\frac{3a(u+1)}{u-5}=3a+\frac{18a}{u-5}.$$ Note that $2|(b+a)$ and $3|b$. It follows that $6|(b-3a)$, that is, $6|\frac{18a}{u-5}$, which implies $(u-5)|(3a)$. Assume $3a=t(u-5)$ for some integer $t$. Then we have $b=t(u+1)$. Recall that $1\leq b\leq 2u+1$. So $t=1$ and then $b=u+1$, $a=\frac{u-5}{3}$. The converse is trivial.

(2) Note that $a\vec{x}_1-b\vec{\omega}=\vec{x}_1$ if and only if $(a-1)\vec{x}_1-b\vec{\omega}=0$, then the result follows from (1).

(3) If $a\vec{x}_1-b\vec{\omega}=\vec{x}_2$, we have $(a+b)\vec{x}_1+(b-1)\vec{x}_2+b\vec{x}_3-b\vec{c}=0$. It implies that $2|(b+a)$, $3|(b-1)$, $(u+1)|b$ and $b=\frac{b+a}{2}+\frac{b-1}{3}+\frac{b}{u+1}$. That is, $$\frac{3a-2}{6b}=\frac{u-5}{6(u+1)}.$$ Then $a\in\mathbb{Z}$ implies $u\neq5$. Hence $$b=\frac{(3a-2)(u+1)}{u-5}=3a-2+\frac{6(3a-2)}{u-5}.$$ Note that $3|(b-1)$ and $2|(b+a)$. It follows that $6|(b-3a+2)$, that is, $6|\frac{6(3a-2)}{u-5}$. It implies $(u-5)|(3a-2)$. Assume $3a-2=t(u-5)$ for some integer $t$. Then we have $b=t(u+1)$. Recall that $1\leq b\leq 2u+1$. So $t=1$ and then $b=u+1$, $a=\frac{u-3}{3}$.

(4) If $a\vec{x}_1-b\vec{\omega}=\vec{x}_3$, we have $(a+b)\vec{x}_1+b\vec{x}_2+(b-1)\vec{x}_3-b\vec{c}=0$. It implies that $2|(b+a)$, $3|b$, $(u+1)|(b-1)$ and $b=\frac{b+a}{2}+\frac{b}{3}+\frac{b-1}{u+1}$. That is, $$\frac{a(u+1)-2}{2b(u+1)}=\frac{u-5}{6(u+1)}.$$ Clearly, $u\neq5$ and then $$b=\frac{3(a(u+1)-2)}{u-5}=3a+\frac{6(3a-1)}{u-5}.$$ Note that $3|b$ and $2|(b+a)$. It follows that $6|(b-3a)$, that is, $6|\frac{6(3a-1)}{u-5}$. It implies $(u-5)|(3a-1)$. Assume $3a-1=t(u-5)$ for some integer $t$. Then we have $b=t(u+1)+1$. Recall that $1\leq b\leq 2u+1$. So $t=1$ and then  $b=u+2$, $a=\frac{u-4}{3}$.
\end{proof}

\begin{lem}\label{Euler form of tau E and E} Let $E$ be an Auslander bundle and $\mathbb{S}$ be the Serre functor of $\textup{\underline{vect}}\mbox{-}\mathbb{X}$. Then for any $1\leq j\leq 2u+1$, $\left\langle \mathbb{S}^jE,E\right\rangle\neq 0$ if and only if one of the following holds:
$$(1)\, 3|n,\, j=u; \quad (2)\, 3|(u+1),\, j=u\; \textup{{or}} \;u+1; \quad (3)\, 3|(u+2),\, j=u+1. $$
\end{lem}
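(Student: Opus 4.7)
My strategy is to convert the non-vanishing of the Euler form $\langle \mathbb{S}^jE,E\rangle$ into an arithmetic identity inside the grading group $\bbL(2,3,u+1)$, so that Lemma \ref{lemma<>} applies directly.

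First I would identify $\mathbb{S}^jE$ inside $\textup{\underline{vect}}\mbox{-}\bbX$. Since the Auslander--Reiten translation on extension bundles in the Frobenius category $\textup{vect}\mbox{-}\bbX$ is the grading shift by $\vec{\omega}$, and since the Serre functor of the associated triangulated stable category is $\mathbb{S}=\tau[1]$, iteration gives $\mathbb{S}^jE\cong E(j\vec{\omega})[j]$. I would then invoke the description of cosyzygies in this Frobenius structure (cf.\ \cite{KLM2013}) to identify the suspension $[1]$ on extension bundles with the grading shift by $\vec{x}_1$, so that $E(\vec{y})[k]\cong E(\vec{y}+k\vec{x}_1)$ in $\textup{\underline{vect}}\mbox{-}\bbX$. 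Combining these with translation-invariance of Hom yields
$$\textup{Hom}_{\textup{\underline{vect}}\mbox{-}\bbX}(\mathbb{S}^jE,E[i])\;\cong\;\textup{Hom}_{\textup{\underline{vect}}\mbox{-}\bbX}\bigl(E,E((i-j)\vec{x}_1-j\vec{\omega})\bigr),$$
so every Hom contributing to the Euler form becomes a graded Hom from the Auslander bundle into one of its own $\bbL$-translates.

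Next I would apply Lemma \ref{414} to this Hom space: it is non-zero (and then one-dimensional) exactly when $(i-j)\vec{x}_1-j\vec{\omega}\in\{0,\vec{x}_1+\vec{\omega},\vec{x}_2+\vec{\omega},\vec{x}_3+\vec{\omega}\}$. Setting $a:=i-j$ and letting $b:=j$ (resp.\ $b:=j+1$) according as the right-hand value is $0$ (resp.\ $\vec{x}_\ell+\vec{\omega}$), this rewrites as $a\vec{x}_1-b\vec{\omega}\in\{0,\vec{x}_1,\vec{x}_2,\vec{x}_3\}$ with $b\in\{j,j+1\}\subseteq[1,2u+1]$, which is precisely the setting of Lemma \ref{lemma<>}. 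Reading off its four parts: part~(1) forces $3\mid(u+1)$ and $b=u+1=j$, giving $j=u+1$; part~(2) forces $3\mid(u+1)$ and $b=u+1=j+1$, giving $j=u$; part~(3) forces $3\mid u$ and $b=u+1=j+1$, giving $j=u$; and part~(4) forces $3\mid(u+2)$ and $b=u+2=j+1$, giving $j=u+1$. These four sub-cases are exactly the three cases listed in the statement. Moreover, at most one of them can hold for a given $(u,j)$ (the congruences $3\mid u,\ 3\mid(u+1),\ 3\mid(u+2)$ are mutually exclusive) and Lemma \ref{lemma<>} produces a unique $(a,b)$ in each, so exactly one index $i$ contributes a one-dimensional Hom; the Euler form therefore collapses to $\pm1\neq0$ in the listed cases and vanishes in all others.

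The main obstacle is securing the identification of the suspension $[1]$ with the grading shift by $\vec{x}_1$ on extension bundles; this is what absorbs all triangulated shift data into pure $\bbL$-arithmetic and reduces the whole lemma to a transparent case analysis via Lemma \ref{lemma<>}. A secondary check is that the range $b\in\{j,j+1\}\subseteq[1,2u+1]$ is respected under the hypothesis $1\leq j\leq 2u+1$, which is immediate except at the boundary $j=2u+1$ where only $b=j$ survives and a quick inspection confirms no contribution.
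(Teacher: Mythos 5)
Your reduction is essentially the paper's own proof: you use $\tau=(\vec{\omega})$, $\mathbb{S}=\tau[1]$ and the identification $[1]=(\vec{x}_1)$ to turn every $\Hom(\mathbb{S}^jE,E[i])$ into a space $\Hom_{\textup{\underline{vect}}\mbox{-}\mathbb{X}}(E,E(a\vec{x}_1-j\vec{\omega}))$, then apply Lemma \ref{414} and translate the four admissible values into the four cases of Lemma \ref{lemma<>} with $b=j$ or $b=j+1$; the paper does exactly this (it states the two identifications and says the result ``follows from Lemma \ref{lemma<>} directly''), and your reading of the four cases, together with the observation that at most one index contributes a one-dimensional Hom so that the Euler form is $\pm1$ or $0$, is correct. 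For $1\leq j\leq 2u$ your argument is complete and agrees with the paper.

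The one step that fails is your treatment of the boundary $j=2u+1$, where $b=j+1=2u+2$ lies outside the hypothesis $1\leq b\leq 2u+1$ of Lemma \ref{lemma<>}: you claim ``a quick inspection confirms no contribution'', but the contribution is actually there. For $b=2u+2=2(u+1)$ the solutions with $t=2$ that the proof of Lemma \ref{lemma<>} rules out for $b\leq 2u+1$ reappear: if $3\mid(u+1)$ then $(2u+2)\vec{\omega}=\tfrac{2(u-5)}{3}\vec{x}_1$, so $a\vec{x}_1-(2u+1)\vec{\omega}=\vec{x}_1+\vec{\omega}$ has the solution $a=\tfrac{2u-7}{3}$ and hence $\left\langle\mathbb{S}^{2u+1}E,E\right\rangle=\pm1\neq0$; concretely, for $u=2$ one has $\mathbb{S}^{6}=(6\vec{\omega}+3\vec{c})=(2\vec{c})=[4]$, so $\left\langle\mathbb{S}^{5}E,E\right\rangle=\left\langle\mathbb{S}^{-1}E,E\right\rangle=\left\langle E,E\right\rangle=1$. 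Similarly, if $3\mid(u+2)$ then $(2u+2)\vec{\omega}=-\vec{x}_2+\tfrac{u-4}{3}\vec{c}$ produces a contribution through the value $\vec{x}_2+\vec{\omega}$. Thus at $j=2u+1$ the form is non-zero whenever $3\nmid u$, so your endpoint check cannot be repaired: it is the stated range of the lemma (and the paper's proof, which tacitly applies Lemma \ref{lemma<>} with $b=2u+2$) that is off by one at $j=2u+1$, not your method. Since the only later use, via Proposition \ref{Hap 1-ex} in the proof of Proposition \ref{d=1}, requires $\left\langle\tau^{j}M,M\right\rangle$ only for $j\leq 2u$, the range one should (and can) prove is $1\leq j\leq 2u$, where your proof and the paper's coincide; also note that ``$3|n$'' in case (1) of the statement should read ``$3|u$'', as you implicitly assumed.
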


\begin{proof}
Since $[1]=(\vec{x}_1)$ and $[2]=(\vec{c})$ in $\textup{\underline{vect}}\mbox{-}\mathbb{X}$, we have
$$\begin{array}{ll}\left\langle \mathbb{S}^jE,E\right\rangle
&=\sum\limits_{l\in\mathbb{Z}}(-1)^l\textup{dim}\;\textup{Hom}_{\textup{\underline{vect}}\mbox{-}\mathbb{X}}(E,\mathbb{S}^{-j}E[l])\\
&=\sum\limits_{l\in\mathbb{Z}}(-1)^l\textup{dim}\;\textup{Hom}_{\textup{\underline{vect}}\mbox{-}\mathbb{X}}(E,E(l\vec{x}_1-j\vec{\omega})).\end{array}
$$
By Lemma \ref{414}, $\textup{Hom}_{\textup{\underline{vect}}\mbox{-}\mathbb{X}}(E,E(l\vec{x}_1-j\vec{\omega}))\neq 0$ if and only if $l\vec{x}_1-j\vec{\omega}\in\{0,\vec{x}_1+\vec{\omega},\vec{x}_2+\vec{\omega},\vec{x}_3+\vec{\omega}\}$. Then the result follows from Lemma \ref{lemma<>} directly.
\end{proof}

Let $A(u)$ be the path algebra associated to the bound quiver $Q(u)$ as follow:
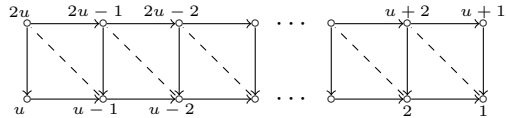
\begin{figure}[H]\begin{tikzpicture}
\node()at(0,0){\tiny{$\circ$}};
\node()at(-0.1,-0.15){\tiny{$u$}};
\node()at(-0.1,1.15){\tiny{$2u$}};
\node()at(0.9,-0.15){\tiny{$u-1$}};
\node()at(0.9,1.15){\tiny{$2u-1$}};
\node()at(1.9,-0.15){\tiny{$u-2$}};
\node()at(1.9,1.15){\tiny{$2u-2$}};
\node()at(1,0){\tiny{$\circ$}};
\node()at(1,1){\tiny{$\circ$}};
\node()at(0,1){\tiny{$\circ$}};
\node()at(2,0){\tiny{$\circ$}};
\node()at(2,1){\tiny{$\circ$}};
\node()at(3,0){\tiny{$\circ$}};
\node()at(3,1){\tiny{$\circ$}};
\node()at(4,0){\tiny{$\circ$}};
\node()at(4,1){\tiny{$\circ$}};
\node()at(5,0){\tiny{$\circ$}};
\node()at(5,1){\tiny{$\circ$}};
\node()at(6,0){\tiny{$\circ$}};
\node()at(6,1){\tiny{$\circ$}};
\node()at(6,-0.15){\tiny{$1$}};
\node()at(6,1.15){\tiny{$u+1$}};
\node()at(5,-0.15){\tiny{$2$}};
\node()at(5,1.15){\tiny{$u+2$}};
\node()at(3.5,0){$\cdots$};
\node()at(3.5,1){$\cdots$};
\draw[-,dashed](0.95,0.05)--(0.05,0.95);
\draw[-,dashed](1.95,0.05)--(1.05,0.95);
\draw[-,dashed](2.95,0.05)--(2.05,0.95);
\draw[-,dashed](4.95,0.05)--(4.05,0.95);
\draw[-,dashed](5.95,0.05)--(5.05,0.95);
\draw[->](0,0.95)--(0,0.05);
\draw[->](1,0.95)--(1,0.05);
\draw[->](0.05,0)--(0.95,0);
\draw[->](0.05,1)--(0.95,1);
\draw[->](2,0.95)--(2,0.05);
\draw[->](3,0.95)--(3,0.05);
\draw[->](4,0.95)--(4,0.05);
\draw[->](1.05,0)--(1.95,0);
\draw[->](1.05,1)--(1.95,1);
\draw[->](2.05,0)--(2.95,0);
\draw[->](2.05,1)--(2.95,1);
\draw[->](4.05,0)--(4.95,0);
\draw[->](4.05,1)--(4.95,1);
\draw[->](5,0.95)--(5,0.05);
\draw[->](5.05,0)--(5.95,0);
\draw[->](5.05,1)--(5.95,1);
\draw[->](6,0.95)--(6,0.05);
\end{tikzpicture}
\caption{The bound quiver $Q(u)$}\label{figurerect}
\end{figure}
Let $M$ be the injective module corresponding to the vertex $1$. Then we have the following result.

\begin{lem}\label{<mf,m>} Let $1\leq j\leq 2u+1$. We have
\begin{itemize}
\item[(1)] for $u\equiv 0\,(\mod 3)$, $\left\langle \tau^jM,M\right\rangle=\left\{\begin{array}{lcl}(-1)^{(u-3)/3},&&j=u,\\0,&&\textup{else;}
\end{array}\right.$
\item[(2)] for $u\equiv 1\,(\mod 3)$, $\left\langle \tau^jM,M\right\rangle=\left\{\begin{array}{lcl}(-1)^{(u-4)/3},&&j=u+1,\\0,&&\textup{else;}
\end{array}\right.$
\item[(3)] for $u\equiv 2\,(\mod 3)$, $\left\langle \tau^jM,M\right\rangle=\left\{\begin{array}{lcl}(-1)^{(u-2)/3},&&j=u,\\(-1)^{(u-5)/3},&&j=u+1,\\0,&&\textup{else.}
\end{array}\right.$
\end{itemize}
\end{lem}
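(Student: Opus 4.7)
My plan is to transfer the computation from $A(u)$-modules to the stable category $\textup{\underline{vect}}\mbox{-}\mathbb{X}$ of vector bundles over the weighted projective line $\mathbb{X}$ of weight type $(2,3,u+1)$, and to reuse the Euler-form analysis of Auslander bundles that underlies Lemma~\ref{Euler form of tau E and E}. By the Kussin--Lenzing--Meltzer tilting construction \cite{KLM2013} recalled in the introduction, $A(u)$ is realised as the endomorphism algebra of a tilting object in $\textup{\underline{vect}}\mbox{-}\mathbb{X}$ whose indecomposable summands are Auslander bundles, and the induced triangle equivalence $\textup{D}^b(A(u))\simeq\textup{\underline{vect}}\mbox{-}\mathbb{X}$ preserves the Euler form and intertwines the Auslander--Reiten translations. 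I would first identify, by tracking the labelling of the tilting summands, the injective $A(u)$-module $M$ at the sink vertex $1$ of $Q(u)$ with the standard Auslander bundle $E=E_{\mathcal{O}}$.

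Granting this identification, the same rewriting used in the proof of Lemma~\ref{Euler form of tau E and E} gives
$$\left\langle\tau^jM,M\right\rangle_{A(u)}=\left\langle\tau^jE,E\right\rangle_{\textup{\underline{vect}}\mbox{-}\mathbb{X}}=\sum_{l\in\mathbb{Z}}(-1)^l\dim_{\bf k}\textup{Hom}_{\textup{\underline{vect}}\mbox{-}\mathbb{X}}\bigl(E,E(l\vec{x}_1-j\vec{\omega})\bigr),$$
where the second equality exploits $[1]=(\vec{x}_1)$ together with the fact that the Auslander--Reiten translation on $\textup{\underline{vect}}\mbox{-}\mathbb{X}$ is shift by $\vec{\omega}$. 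By Lemma~\ref{414} this sum is nonzero only when $l\vec{x}_1-j\vec{\omega}\in\{0,\vec{x}_1+\vec{\omega},\vec{x}_2+\vec{\omega},\vec{x}_3+\vec{\omega}\}$, and each surviving term contributes exactly $(-1)^l$.

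I would then feed the four possible target elements into Lemma~\ref{lemma<>} (after shifting $j\mapsto j+1$ for the three targets of the form $\vec{x}_i+\vec{\omega}$, to bring them into the normal form treated there) and read off the unique admissible pair $(j,l)$ in the range $1\leq j\leq 2u+1$: the case $\vec{x}_2+\vec{\omega}$ forces $3\mid u$, $j=u$, $l=(u-3)/3$; the case $\vec{x}_3+\vec{\omega}$ forces $3\mid(u+2)$, $j=u+1$, $l=(u-4)/3$; the case $\vec{x}_1+\vec{\omega}$ forces $3\mid(u+1)$, $j=u$, $l=(u-2)/3$; and the case $0$ forces $3\mid(u+1)$, $j=u+1$, $l=(u-5)/3$. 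Organising these by the residue of $u$ modulo $3$ produces precisely the three items (1), (2) and (3) of the lemma, with each value being the corresponding $(-1)^l$.

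The main obstacle is the identification in the first paragraph: one must pinpoint exactly which Auslander bundle in the KLM tilting object corresponds to the injective module $M$ at vertex $1$, and verify that the sign conventions for the Auslander--Reiten translation, the suspension $[1]=(\vec{x}_1)$, and the Serre functor on $\textup{\underline{vect}}\mbox{-}\mathbb{X}$ are compatible with the module-theoretic Euler form, so that the factor $(-1)^l$ coming from the stable category reproduces the correct sign on the module side. Once that bookkeeping is settled, the arithmetic extraction in the second paragraph is immediate from Lemma~\ref{lemma<>}.
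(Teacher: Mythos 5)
Your proposal follows essentially the same route as the paper: transfer $\left\langle\tau^jM,M\right\rangle$ to $\textup{\underline{vect}}\mbox{-}\mathbb{X}$ via the Kussin--Lenzing--Meltzer tilting object, reduce to $\left\langle\tau^jE,E\right\rangle$ for the Auslander bundle $E$, and then combine Lemma~\ref{414} with the arithmetic of Lemma~\ref{lemma<>} (applied with $b=j+1$ for the three targets $\vec{x}_i+\vec{\omega}$), which is exactly the content of Lemma~\ref{Euler form of tau E and E} and the paper's proof. The only deviation is your identification of $M$ with $E=E_{\mathcal{O}}$ itself: the summand attached to vertex $1$ is in fact $E\langle\vec{x}_2+(u-1)\vec{x}_3\rangle=\mathbb{S}E$ (this is how the paper settles the ``obstacle'' you flag, via \cite[Proposition 4.15, Theorem 6.1]{KLM2013}), but since the pairing $\left\langle\tau^j(-),-\right\rangle$ is unchanged under shifts and autoequivalences commuting with $\tau$, this discrepancy does not affect the computation.
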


\begin{proof}
We consider the category $\textup{\underline{vect}}\mbox{-}\mathbb{X}$ for $\mathbb{X}$ of weight type $(2,3,u+1)$. By \cite[Theorem 6.1]{KLM2013},
$$T_{\textup{cub}}=(\bigoplus_{i=0}^{u-1}E\langle i\vec{x}_3\rangle) \oplus (\bigoplus_{i=0}^{u-1} E\langle\vec{x}_2+i\vec{x}_3\rangle)$$
is tilting objects in $\textup{\underline{vect}}\mbox{-}\mathbb{X}$, whose endomorphism algebra has the following shape:
$$\tiny{\xymatrix{E\ar[r]\ar[d]&E\langle\vec{x}_3\rangle\ar[r]\ar[d]&\cdots\ar[r]&E\langle(u-2)\vec{x}_3\rangle\ar[r]\ar[d]&E\langle(u-1)\vec{x}_3\rangle\ar[d]\\
E\langle{\vec{x}_2}\rangle\ar[r]&E\langle\vec{x}_2+\vec{x}_3\rangle\ar[r]&\cdots\ar[r]&E\langle\vec{x}_2+(u-2)\vec{x}_3\rangle\ar[r]&E\langle\vec{x}_2+(u-1)\vec{x}_3\rangle.}}$$
We also note that by \cite[Proposition 4.15]{KLM2013}, we have $$E\langle\vec{x}_2+(u-1)\vec{x}_3\rangle=E(\vec{c}-\vec{x}_2-\vec{x}_3)=\mathbb{S}E.$$
Therefore, $$\begin{array}{ll}\left\langle \tau^jM,M\right\rangle&=\left\langle \mathbb{S}^jE\langle\vec{x}_2+(u-1)\vec{x}_3\rangle,E\langle\vec{x}_2+(u-1)\vec{x}_3\rangle\right\rangle=\left\langle\mathbb{S}^{j}E,E\right\rangle.\end{array}$$
Recall that the nonzero homomorphism between Auslander bundles are of dimension one, see Lemma \ref{414}. Then the result follows from Lemma \ref{Euler form of tau E and E}.
\end{proof}

\subsection{Coxeter polynomial of $A(u)_1$}

Now we can give the formula of the Coxeter polynomial of the one-point extension algebra $A(u)_1$ associated to $Q(u)_1$.

\begin{prop}\label{d=1}
The Coxeter polynomial of $A(u)_1$ is given by
$$\chi_{A(u)_1}(\lambda)=\left\{\begin{array}{ll}
\frac{(\lambda+1)(\lambda^{2u+3}+1)}{\lambda^3+1}, &u=3i,\\\\
\frac{(\lambda+1)\big(\lambda^{u+1}+(-1)^{u+1}\big)\big(\lambda^{u+2}+(-1)^{u+1}\big)}{\lambda^3+1}, &u=3i+1,\\\\
\frac{(\lambda+1)\big(\lambda^{u+1}-(-1)^{u+1}\big)\big(\lambda^{u+2}-(-1)^{u+1}\big)}{\lambda^3+1},&u=3i+2.\end{array}\right.$$
\end{prop}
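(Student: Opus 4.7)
The plan is to derive $\chi_{A(u)_1}(\lambda)$ from $\chi_{A(u)}(\lambda)$ by applying Happel's one-point extension formula (Proposition \ref{Hap 1-ex}), using Lemma \ref{<mf,m>} to control which correction terms actually appear.

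First I would identify the extension module. In the quiver $Q(u)_1$, the extension module $M$ sits at vertex $1$ of $Q(u)$, which is a sink, so $M$ coincides with the simple module $S_1$. Since $Q(u)$ has no loops at vertex $1$ and $M$ is simple, $M$ is exceptional in $\mod$-$A(u)$: $\End(M)={\bf k}$ and $\Ext^i(M,M)=0$ for $i\geq 1$. In particular $\langle M,M\rangle_{A(u)}=1$, so the term $\bigl(\langle M,M\rangle_{A(u)}-1\bigr)$ drops out of Happel's formula, leaving
$$b_i=a_i-\sum_{j=2}^{i}a_{i-j}\left\langle\tau^{j-1}M,M\right\rangle_{A(u)}.$$

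Next I would invoke Lemma \ref{<mf,m>}, which shows that $\langle\tau^kM,M\rangle_{A(u)}$ vanishes for all but at most two values of $k$, with those values and signs depending on the residue of $u$ modulo $3$: namely $k=u$ for $u\equiv 0$, $k=u+1$ for $u\equiv 1$, and both $k=u$ and $k=u+1$ for $u\equiv 2$. Rewriting the Happel recursion in generating-function form gives
$$\chi_{A(u)_1}(\lambda)=\lambda\cdot\chi_{A(u)}(\lambda)-\sum_{k}\left\langle\tau^kM,M\right\rangle_{A(u)}\cdot T_k(\lambda),$$
where $T_k(\lambda):=\sum_{m=0}^{2u-k}a_m\lambda^{2u-k-m}$ is the truncation of $\chi_{A(u)}$ to degree $2u-k$, and the sum ranges over the at most two values of $k$ just identified.

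Finally I would substitute the closed formulas for $\chi_{A(u)}$ from Proposition \ref{d=0}, using the explicit expansions (\ref{remark1})--(\ref{remark3}), and simplify case by case. In each case the correction polynomials $T_k(\lambda)$ are themselves partial sums of the same alternating geometric series $\sum_j(-\lambda)^{3j}$ that appear in those expansions, so after combining them with $\lambda\cdot\chi_{A(u)}(\lambda)$ there is a substantial telescoping cancellation. The residual polynomial should then factor as $(\lambda+1)(\lambda^{2u+3}+1)/(\lambda^3+1)$ when $u=3i$, and as $(\lambda+1)(\lambda^{u+1}\pm 1)(\lambda^{u+2}\pm 1)/(\lambda^3+1)$ in the remaining two cases, the signs determined by $(-1)^{u+1}$.

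The main obstacle is the sign bookkeeping when $u\equiv 2\pmod 3$: two nonzero correction terms appear with opposite parity indices $(-1)^{(u-2)/3}$ and $(-1)^{(u-5)/3}$, so one has to assemble $\lambda\chi_{A(u)}(\lambda)$ minus both contributions and recognize the outcome not as a single cyclotomic-type quotient but as a genuine product of two factors $(\lambda^{u+1}-(-1)^{u+1})(\lambda^{u+2}-(-1)^{u+1})$. The cases $u\equiv 0,1\pmod 3$ are easier, since then only a single correction term is present and the simplification amounts to a straightforward shift-and-subtract on the expansion of $\chi_{A(u)}$.
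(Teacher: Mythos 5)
Your overall strategy is the same as the paper's: realize $A(u)_1$ as a one-point extension of $A(u)$, apply Happel's formula (Proposition \ref{Hap 1-ex}) with $\langle M,M\rangle=1$, feed in the nonvanishing values of $\langle\tau^{k}M,M\rangle$ from Lemma \ref{<mf,m>}, and rewrite the recursion as $\chi_{A(u)_1}=\lambda\chi_{A(u)}-\sum_k\langle\tau^kM,M\rangle T_k(\lambda)$ with $T_k$ a truncation of $\chi_{A(u)}$, which is then simplified case by case via the expansions \eqref{remark1}--\eqref{remark3}; this is literally the computation in the paper (and your observation that only $k\le 2u$ enters is correct, so the stated range of the lemma suffices).

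The genuine gap is in your identification of the extension module and the ensuing citation of Lemma \ref{<mf,m>}. You take $M=S_1$, arguing ``vertex $1$ is a sink, so $M=S_1$''; the paper instead takes $M$ to be the \emph{injective} module at vertex $1$, and Lemma \ref{<mf,m>} is stated and proved only for that module (via its identification with $\mathbb{S}E=E\langle\vec{x}_2+(u-1)\vec{x}_3\rangle$ among the summands of the tilting object $T_{\textup{cub}}$). For $Q(u)$ the two modules are quite different: since vertex $1$ is the unique sink, $S_1$ is the simple \emph{projective} there, while $I_1$ is the sincere thin module (equal to the projective at the source $2u$). So as written you quote the crucial Euler-form input for a module the lemma does not cover, and your one-line identification of $M$ is exactly the delicate point (extension versus co-extension, and whether the attached vertex must carry the top or the socle of $M$), which deserves a real argument rather than ``sink $\Rightarrow$ simple''. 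The gap is repairable: either identify $M$ as the paper does, or justify transferring the lemma to $S_1$ by noting that under the KLM equivalence $S_1=P_1$ corresponds to the Auslander bundle $E$ and $I_1=P_{2u}$ to $\mathbb{S}E$, so $\langle\tau^jS_1,S_1\rangle=\langle\mathbb{S}^jE,E\rangle=\langle\tau^jI_1,I_1\rangle$ by $\mathbb{S}$-invariance of the Euler form; but some such step must be supplied, and your proposal shows no awareness that it is needed. (Minor additional points: ``no loops at vertex $1$'' only controls $\textup{Ext}^1$, not higher self-extensions -- exceptionality of $S_1$ is better justified by its projectivity -- and the final factorizations are only asserted, not carried out, though they do follow by the same telescoping the paper performs.)
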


\begin{proof}
Let $A(u)[M]$ be the one-point extension algebra of $A(u)$ by the injective module $M$ corresponding to the vertex $1$, c.f. Figure \ref{figurerect}.
Observe that the path algebra $A(u)_1$ associated to the bound quiver $Q(u)_1$ is isomorphic to $A(u)[M]$.

Let $$\chi_{A(u)}(\lambda)=\sum_{j=0}^{2u} a_j\lambda^{2u-j}\quad\text{and}\quad\chi_{A(u)_1}(\lambda)=\sum_{j=0}^{2u+1}b_j\lambda^{2u+1-j}$$ be the Coxeter polynomials of $A(u)$ and $A(u)_1$ respectively. Since $M$ is exceptional, we have $\left\langle M,M\right\rangle=1$. Then by Proposition \ref{Hap 1-ex},
$$b_j=a_j-\sum_{l=2}^j a_{j-l}\left\langle \tau^{l-1}M,M\right\rangle.$$

(1) For $u=3i$, by Lemma \ref{<mf,m>} (1), we have $\left\langle \tau^{j-1}M,M\right\rangle\neq0$ if and only if $j=u+1$, and $\left\langle \tau^{u}M,M\right\rangle=(-1)^{i-1}$. Hence,
$$b_j=\left\{\begin{array}{ll}a_j,&0\leq j\leq u,\\
a_j-(-1)^{i-1}a_{j-u-1},&u+1\leq j\leq 2u+1.\end{array}\right.$$
Hence,
\begin{align*}\chi_{A(u)_1}(\lambda)&=\sum_{j=0}^{2u+1}b_j\lambda^{2u+1-j}\\
&=\sum_{j=0}^{u}a_j\lambda^{2u+1-j}+\sum_{j=u+1}^{2u+1}(a_j-(-1)^{i-1}a_{j-u-1})\lambda^{2u+1-j}\\
&=\sum_{j=0}^{2u+1}a_j\lambda^{2u+1-j}-(-1)^{i-1}\sum_{j=u+1}^{2u+1}a_{j-u-1}\lambda^{2u+1-j}\\
&=\lambda\cdot\chi_{A(u)}(\lambda)+(-1)^i\sum_{j=0}^{u}a_j\lambda^{u-j}.\end{align*}
By formula \ref{remark1}, we have that
\begin{align*}\sum_{j=0}^{u}a_j\lambda^{u-j}&=\frac{1}{\lambda^{u}}\sum_{j=0}^{u}a_j\lambda^{2u-j}\\
&=\frac{1}{\lambda^{u}}(-(\lambda+1)\sum_{j=0}^{i-1}(-\lambda)^{3i+2+3j}+(-\lambda)^{3i})\\
&=(-1)^{u+1}(\lambda+1)\sum_{j=0}^{i-1}(-\lambda)^{2+3j}+(-1)^{u}\\
&=(-1)^{u+1}(\lambda+1)\frac{\lambda^2-(-\lambda)^{2+3i}}{1-(-\lambda)^{3}}+(-1)^{u}\\
&=(-1)^{u+1}\frac{(\lambda+1)(\lambda^2-(-\lambda)^{2+3i})-(\lambda^3+1)}{\lambda^3+1}\\
&=(-1)^{u+1}(\lambda+1)\frac{\lambda^2-(-\lambda)^{2+3i}-\lambda^2+\lambda-1}{\lambda^3+1}\\
&=(-1)^{u}\frac{\lambda+1}{\lambda^3+1}((-\lambda)^{2+3i}-\lambda+1).
\end{align*}
Therefore,
\begin{align*}\chi_{A(u)_1}(\lambda)&=\lambda\cdot\chi_{A(u)}(\lambda)+\frac{\lambda+1}{\lambda^3+1}((-\lambda)^{2+3i}-\lambda+1)\\
&=\frac{\lambda+1}{\lambda^3+1}\big(\lambda(\lambda^{2u+2}+(-\lambda)^{u+1}+1)
+(-\lambda)^{2+u}-\lambda+1\big)\\
&=\frac{\lambda+1}{\lambda^3+1}(\lambda^{2u+3}+1)
\end{align*}

(2) For $u=3i+1$, by Lemma \ref{<mf,m>} (2), we have $\left\langle \tau^{j-1}M,M\right\rangle\neq0$ if and only if $j=u+2$, and $\left\langle \tau^{u+1}M,M\right\rangle=(-1)^{i-1}$. Hence,
$$b_j=\left\{\begin{array}{ll}a_j,&0\leq j\leq u+1,\\
a_j-(-1)^{i-1}a_{j-u-2},&u+2\leq j\leq 2u+1.\end{array}\right.$$
Hence,
\begin{align*}\chi_{A(u)_1}(\lambda)&=\sum_{j=0}^{2u+1}b_j\lambda^{2u+1-j}\\
&=\sum_{j=0}^{u+1}a_j\lambda^{2u+1-j}+\sum_{j=u+2}^{2u+1}(a_j-(-1)^{i-1}a_{j-u-2})\lambda^{2u+1-j}\\
&=\sum_{j=0}^{2u+1}a_j\lambda^{2u+1-j}-(-1)^{i-1}\sum_{j=u+2}^{2u+1}a_{j-u-2}\lambda^{2u+1-j}\\
&=\lambda\cdot\chi_{A(u)}(\lambda)+(-1)^i\sum_{j=0}^{u-1}a_j\lambda^{u-1-j}\\
\end{align*}
By formula \ref{remark2}, we have that
\begin{align*}\sum_{j=0}^{u-1}a_j\lambda^{u-1-j}&=\frac{1}{\lambda^{u+1}}\sum_{j=0}^{u-1}a_j\lambda^{2u-j}\\
&=\frac{1}{\lambda^{u+1}}\big(-(\lambda+1)\cdot\sum_{j=0}^{i-1}(-\lambda)^{3i+4+3j}+(-\lambda)^{3i+2}\big)\\
&=(-1)^{u+2}(\lambda+1)\sum_{j=0}^{i-1}(-\lambda)^{2+3j}+(-1)^{u+1}\\
&=(-1)^{u}(\lambda+1)\frac{\lambda^2-(-\lambda)^{2+3i}}{1-(-\lambda)^{3}}+(-1)^{u+1}\\
&=(-1)^{u}\frac{(\lambda+1)(\lambda^2-(-\lambda)^{2+3i})-(\lambda^3+1)}{\lambda^3+1}\\
&=(-1)^{u}(\lambda+1)\frac{\lambda^2-(-\lambda)^{2+3i}-\lambda^2+\lambda-1}{\lambda^3+1}\\
&=(-1)^{u+1}\frac{\lambda+1}{\lambda^3+1}((-\lambda)^{2+3i}-\lambda+1).
\end{align*}
Therefore,
\begin{align*}\chi_{A(u)_1}(\lambda)&=\lambda\cdot\chi_{A(u)}(\lambda)
+\frac{\lambda+1}{\lambda^3+1}\cdot\big((-\lambda)^{2+3i}-\lambda+1\big)\\
&=\frac{\lambda+1}{\lambda^3+1}\cdot\big(\lambda(\lambda^{2u+2}+(-\lambda)^{u+1}+1)+((-\lambda)^{u+1}-\lambda+1)\big)\\
&=\frac{\lambda+1}{\lambda^3+1}\cdot(\lambda^{2u+3}-(-\lambda)^{u+2}+(-\lambda)^{u+1}+1)\\
&=\frac{(\lambda+1)(\lambda^{u+1}+(-1)^{u+1})(\lambda^{u+2}+(-1)^{u+1})}{\lambda^3+1}
\end{align*}

(3) For $u=3i+2$, by Lemma \ref{<mf,m>} (3), we have $\left\langle \tau^{j-1}M,M\right\rangle\neq0$ if and only if $j=u+1, u+2$, and $\left\langle \tau^{u}M,M\right\rangle=(-1)^{i}$, $\left\langle \tau^{u+1}M,M\right\rangle=(-1)^{i-1}$. Hence,
$$b_j=\left\{\begin{array}{ll}a_j,&0\leq j\leq u,\\
a_{u+1}-(-1)^ia_0,&j=u+1,\\
a_j-(-1)^{i}a_{j-u-1}-(-1)^{i-1}a_{j-u-2},&u+2\leq j\leq 2u+1.\end{array}\right.$$
Hence,
\begin{align*}\chi_{A(u)_1}(\lambda)&=\sum_{j=0}^{2u+1}b_j\lambda^{2u+1-j}\\
&=\sum_{j=0}^{u}a_j\lambda^{2u+1-j}+(a_{u+1}-(-1)^ia_0)\lambda^{u}\\
&\qquad+\sum_{j=u+2}^{2u+1}(a_j-(-1)^{i}a_{j-u-1}-(-1)^{i-1}a_{j-u-2})\lambda^{2u+1-j}\\
&=\sum_{j=0}^{2u+1}a_j\lambda^{2u+1-j}+(-1)^{i+1}\sum_{j=u+1}^{2u+1}a_{j-u-1}\lambda^{2u+1-j}\\
&+(-1)^{i}\sum_{j=u+2}^{2u+1}a_{j-u-2}\lambda^{2u+1-j}\\
&=\lambda\cdot\chi_{A(u)}(\lambda)+(-1)^{i+1}\sum_{j=0}^{u}a_j\lambda^{u-j}+(-1)^{i}\sum_{j=0}^{u-1}a_j\lambda^{u-1-j}
\end{align*}
By formula \ref{remark3}, we have that
\begin{align*}&(-1)^{i+1}\sum_{j=0}^{u}a_j\lambda^{u-j}+(-1)^{i}\sum_{j=0}^{u-1}a_j\lambda^{u-1-j}\\
=&(-1)^{i+1}\frac{1}{\lambda^{u}}\sum_{j=0}^{u}a_{j}\lambda^{2u-j}
+(-1)^{i}\frac{1}{\lambda^{u+1}}\sum_{j=0}^{u-1}a_{j}\lambda^{2u-j}\\
=&-\frac{1}{(-\lambda)^{3i+2}}
\big(-(\lambda+1)\cdot\sum_{j=0}^{i}(-\lambda)^{3i+3+3j}\big)-\frac{1}{(-\lambda)^{3i+3}}\big(-(\lambda+1)\cdot\sum_{j=0}^{i}(-\lambda)^{3i+3+3j}\big)\\
=&(\lambda+1)\cdot\sum_{j=0}^{i}(-\lambda)^{1+3j}
+(\lambda+1)\cdot\sum_{j=0}^{i}(-\lambda)^{3j}\\
=&(\lambda+1)(1-\lambda)\cdot\sum_{j=0}^{i}(-\lambda)^{3j}\\
=&(\lambda+1)(1-\lambda)\frac{1-(-\lambda)^{3i+3}}{\lambda^3+1}
\end{align*}
Therefore,
\begin{align*}\chi_{A(u)_1}(\lambda)&=\lambda\cdot\chi_{A(u)}(\lambda)+(\lambda+1)(1-\lambda)\frac{1-(-\lambda)^{3i+3}}{\lambda^3+1}\\
&=\frac{\lambda+1}{\lambda^3+1}\big(\lambda(\lambda^{2u+2}-2(-\lambda)^{u+1}+1)
+(1-\lambda)(1-(-\lambda)^{u+1})\big)\\
&=\frac{\lambda+1}{\lambda^3+1}\big(\lambda^{2u+3}+(-\lambda)^{u+2}-(-\lambda)^{u+1}+1\big)\\
&=\frac{(\lambda+1)(\lambda^{u+1}-(-1)^{u+1})(\lambda^{u+2}-(-1)^{u+1})}{\lambda^3+1}
\end{align*}
\end{proof}

\subsection{Coxeter polynomial of $A(u)_v$}

Now we plan to calculate the Coxeter polynomial for the one-branch extension algebra $A(u)_v$.

For a rational function over $\bf{k}$ we refer to the fractional function $\frac{a(\lambda)}{b(\lambda)}$, where $a(\lambda), b(\lambda)\in\bf{k[\lambda]}$. Let $V$ be the set of rational function series $(\phi_j(\lambda))_{j\geq 1}$ satisfying
\begin{equation}\label{three term relation}
\phi_{j+2}(\lambda)=(\lambda+1)\phi_{j+1}(\lambda)-\lambda\phi_{j}(\lambda).
\end{equation}
Then
$V$ forms a $\bf{k}$-vector space, and
any function series $(\phi_j(\lambda))_{j\geq 1}\in V$ is determined by its first two terms $\phi_1(\lambda)$ and $\phi_2(\lambda)$.
It is easy to see that the following function series $(\phi_j(\lambda))_{j\geq 1}$ belong to $V$:
\begin{itemize}
  \item[(1)] $\phi_j(\lambda)=f(\lambda)$ for any $j\geq 1$, where $f(\lambda)$ is a given rational function;
  \item[(2)] $\phi_j(\lambda)=\lambda^j$ for any $j\geq 1$.
\end{itemize}

Moreover, we have the following result.

\begin{lem}\label{jj+1j+2} For any given rational function $f(\lambda)$ and $g(\lambda)$, let $$\phi_j(\lambda)=\lambda^{j} f(\lambda)+g(\lambda), \qquad j\geq 1.$$Then \eqref{three term relation} holds, i.e., $(\phi_j(\lambda))_{j\geq 1}\in V$.
\end{lem}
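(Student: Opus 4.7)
The plan is a one-line substitution, preceded by an observation that makes the verification conceptually transparent. Note that $\phi_j(\lambda) = \lambda^j f(\lambda) + g(\lambda)$ is a $\mathbf{k}(\lambda)$-linear combination of two sequences already known to lie in $V$: namely $f(\lambda)$ times the sequence $(\lambda^j)_{j\geq 1}$ from item~(2), and $g(\lambda)$ times the constant sequence obtained from item~(1) with fixed rational function equal to $1$. The recursion \eqref{three term relation} has coefficients in $\mathbf{k}[\lambda]$, so the property of satisfying it is preserved under forming $\mathbf{k}(\lambda)$-linear combinations (not merely $\mathbf{k}$-linear ones). Hence $(\phi_j)_{j\geq 1}$ lies in $V$, which is exactly the assertion.

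For a direct check I would write
$$(\lambda+1)\phi_{j+1}(\lambda) - \lambda\phi_j(\lambda) = (\lambda+1)\bigl(\lambda^{j+1}f(\lambda)+g(\lambda)\bigr) - \lambda\bigl(\lambda^j f(\lambda)+g(\lambda)\bigr),$$
and observe that the $f$-contribution collapses via $(\lambda+1)\lambda^{j+1}-\lambda\cdot\lambda^{j} = \lambda^{j+2}$ while the $g$-contribution collapses via $(\lambda+1)-\lambda = 1$, giving $\lambda^{j+2}f(\lambda)+g(\lambda) = \phi_{j+2}(\lambda)$. There is no substantive obstacle: the lemma is a formal bookkeeping statement whose real role is to supply a convenient closed form lying in $V$, so that in the subsequent computation of $\chi_{A(u)_v}(\lambda)$ one can identify the Coxeter-polynomial sequence as a specific element of $V$ and pin it down using only its first two terms $\phi_1(\lambda)$ and $\phi_2(\lambda)$.
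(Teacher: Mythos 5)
Your proposal is correct, and your "direct check" is word-for-word the paper's own proof: substitute $\phi_{j+1},\phi_j$ into $(\lambda+1)\phi_{j+1}-\lambda\phi_j$ and observe the two contributions collapse to $\lambda^{j+2}f(\lambda)+g(\lambda)=\phi_{j+2}(\lambda)$. The preliminary remark about closure of the recursion under $\mathbf{k}(\lambda)$-linear combinations is a valid (and slightly more conceptual) packaging of the same computation, so there is nothing to flag.
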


\begin{proof} By definition we have,
\begin{align*}
&(\lambda+1)\phi_{j+1}(\lambda)-\lambda\phi_{j}(\lambda)\\
=&(\lambda+1)\big(\lambda^{j+1} f(\lambda)+g(\lambda)\big)-\lambda\big(\lambda^{j} f(\lambda)+g(\lambda)\big)\\
=&\lambda^{j+2}f(\lambda)+g(\lambda)\\
=&\phi_{j+2}(\lambda).
\end{align*}
\end{proof}

Now we can state the main result of this section.
\begin{prop}\label{coxeterpoly} Assume $v\geq 1$.
The Coxeter polynomial of $A(u)_v$ is given by
$$\chi_{A(u)_v}(\lambda)=\left\{\begin{array}{ll}
\frac{(\lambda+1)\big(\lambda^{2u+v+2}+(-1)^{u}\sum_{j=2}^{v}\lambda^{u+j}+1\big)}
    {\lambda^3+1}, &u=3i,\\\\
\frac{(\lambda+1)
    \big(\lambda^{2u+v+2}+(-1)^{u+1}\sum_{j=0}^{v}\lambda^{u+j+1}+1\big)}{\lambda^3+1}, &u=3i+1,\\\\
\frac{(\lambda+1)\big(\lambda^{u+1}-(-1)^{u+1}\big)
    \big(\lambda^{u+v+1}-(-1)^{u+1}\big)}{\lambda^3+1},&u=3i+2.\end{array}\right.$$
\end{prop}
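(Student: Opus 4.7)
The plan is to establish a three-term recurrence for the sequence $\bigl(\chi_{A(u)_v}(\lambda)\bigr)_{v\geq 1}$ and reduce, via Lemma~\ref{jj+1j+2}, to verifying the formula at two base indices. The central step is the recurrence
\[
\chi_{A(u)_{v+1}}(\lambda) \;=\; (\lambda+1)\,\chi_{A(u)_v}(\lambda) \;-\; \lambda\,\chi_{A(u)_{v-1}}(\lambda), \qquad v \geq 1,
\]
(with $A(u)_0 := A(u)$), which I would derive from Proposition~\ref{one-point-extension} by viewing $A(u)_{v+1}$ as the one-point extension $A(u)_v[M]$ for an appropriate exceptional $M$, and identifying $M^{\perp}$ as derived equivalent to $\textup{mod}\mbox{-}A(u)_{v-1}$. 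A natural candidate is the simple module at the terminal vertex of the $v$-tail of $A(u)_v$: this vertex is a sink, so its simple is simple projective (hence exceptional), and the perpendicular category should consist of modules supported off this vertex, yielding $\textup{mod}\mbox{-}A(u)_{v-1}$.

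Next, I would check that each of the three proposed closed forms in Proposition~\ref{coxeterpoly} can be rewritten in the shape $\phi_v(\lambda) = \lambda^{v}f_u(\lambda) + g_u(\lambda)$ by recognizing the $v$-dependent middle summation as a geometric series. For example, in the case $u = 3i$,
\[
\sum_{j=2}^{v}\lambda^{u+j} \;=\; \frac{\lambda^{u+v+1} - \lambda^{u+2}}{\lambda-1},
\]
which separates cleanly into a $\lambda^{v}$-part and a $v$-independent remainder; the cases $u = 3i+1$ (where one expands $\sum_{j=0}^{v}\lambda^{u+j+1}$) and $u = 3i+2$ (where one expands the product $(\lambda^{u+v+1}-(-1)^{u+1})$) admit analogous rewritings. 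By Lemma~\ref{jj+1j+2}, every such sequence automatically obeys the three-term recurrence.

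Since both the true Coxeter polynomials and the proposed formulas then satisfy the same recurrence, it suffices by Lemma~\ref{jj+1j+2} to check agreement at two initial indices. I would match $v=1$ against Proposition~\ref{d=1} by direct comparison in each residue class of $u \pmod{3}$, and match $v=2$ by computing the left-hand side from the recurrence $\chi_{A(u)_2} = (\lambda+1)\chi_{A(u)_1} - \lambda\,\chi_{A(u)}$ together with Propositions~\ref{d=0} and~\ref{d=1}, then verifying it equals the claimed closed form.

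The main obstacle is the recurrence itself, specifically the identification of the right perpendicular category $M^{\perp}$ with $\textup{mod}\mbox{-}A(u)_{v-1}$ in the derived sense. An alternative route is to apply Ladkani's Proposition~\ref{Ladkani} directly to the poset underlying $A(u)_v$, bypassing the perpendicular computation at the cost of a more careful poset-theoretic argument about the closed subset corresponding to the terminal branch vertex. Everything downstream is routine polynomial manipulation, parallel in the three residue classes and similar in spirit to the proof of Proposition~\ref{d=1}.
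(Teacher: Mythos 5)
Your overall architecture coincides with the paper's proof: derive the three-term recurrence $\chi_{A(u)_{v+1}}(\lambda)=(\lambda+1)\chi_{A(u)_v}(\lambda)-\lambda\chi_{A(u)_{v-1}}(\lambda)$ from Proposition \ref{one-point-extension}, rewrite the three closed forms as $\lambda^v f_u(\lambda)+g_u(\lambda)$ so that Lemma \ref{jj+1j+2} puts them in $V$, and check the two base cases against Propositions \ref{d=0} and \ref{d=1}; the polynomial part of your plan is routine and correct. The gap is precisely in the step you call central: your candidate for the extension module. The simple module $S_c$ at the terminal vertex $c$ of the tail of $A(u)_v$ does \emph{not} satisfy $A(u)_{v+1}\cong A(u)_v[S_c]$. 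In a one-point extension $B[M]$ the radical of the new indecomposable projective is $M$; in $A(u)_{v+1}$ the newly added tail vertex is comparable, in the underlying poset, with \emph{every} vertex of $A(u)_v$, so the connecting bimodule is the sincere thin module of dimension vector $(1,1,\dots,1)$, not a one-dimensional simple. Concretely, in $A(u)_v[S_c]$ the new vertex is joined to the rest by a single arrow with all longer compositions zero, so its row of the Cartan matrix has exactly two nonzero entries, whereas the terminal vertex of $A(u)_{v+1}$ has Cartan row consisting entirely of ones; the two algebras are not isomorphic. This is why the paper takes $M$ to be the injective module at the end-of-tail vertex $2u+1$ (the thin sincere module), in complete analogy with the proof of Proposition \ref{d=1}, where $M$ is the injective at the corner vertex $1$ rather than the simple there.

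With the correct $M$ the identification $A(u)_{v+1}\cong A(u)_v[M]$ is immediate from the poset structure, and the substantive assertion becomes that the right perpendicular category $M^{\perp}$ in $\textup{mod}\mbox{-}A(u)_v$ is derived equivalent to $\textup{mod}\mbox{-}A(u)_{v-1}$; note that your clean description of the perpendicular category (``modules supported off the vertex'') is valid only for the simple projective $S_c$, not for the sincere $M$, so it cannot be transported as is. Alternatively, you could keep $S_c$ but then you must additionally prove that $A(u)_v[S_c]$ is derived equivalent to $A(u)_{v+1}$ (its quiver is obtained from $Q(u)_{v+1}$ by reversing the single arrow at the terminal vertex, so a reflection/APR tilt argument is plausible), after which Proposition \ref{one-point-extension} applied to $A(u)_v[S_c]$ would yield the recurrence for $\chi_{A(u)_{v+1}}$; as written, neither this nor the correct choice of $M$ is carried out, so the recurrence is not established. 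Your closing alternative via Proposition \ref{Ladkani} does not substitute for this step either: Ladkani's result produces derived equivalences between incidence algebras and the algebras $A_Y$, but by itself it does not give the inductive formula for the Coxeter polynomial.
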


\begin{proof}
Let $Q(u)_{v}$ be the bound quiver as follow:
$$\begin{tikzpicture}
\node()at(-0.1,-0.15){\tiny{$u$}};
\node()at(-0.1,1.15){\tiny{$2u$}};
\node()at(0.9,-0.15){\tiny{$u-1$}};
\node()at(0.9,1.15){\tiny{$2u-1$}};
\node()at(0,0){\tiny{$\circ$}};
\node()at(1,0){\tiny{$\circ$}};
\node()at(1,1){\tiny{$\circ$}};
\node()at(0,1){\tiny{$\circ$}};
\node()at(2,0){\tiny{$\circ$}};
\node()at(2,1){\tiny{$\circ$}};
\node()at(3,0){\tiny{$\circ$}};
\node()at(3,1){\tiny{$\circ$}};
\node()at(4,-0.15){\tiny{$2$}};
\node()at(4,1.15){\tiny{$u+2$}};
\node()at(5,-0.15){\tiny{$1$}};
\node()at(5,1.15){\tiny{$u+1$}};
\node()at(4,0){\tiny{$\circ$}};
\node()at(4,1){\tiny{$\circ$}};
\node()at(5,0){\tiny{$\circ$}};
\node()at(5,1){\tiny{$\circ$}};
\node()at(2.5,0){$\cdots$};
\node()at(2.5,1){$\cdots$};
\draw[-,dashed](0.95,0.05)--(0.05,0.95);
\draw[-,dashed](1.95,0.05)--(1.05,0.95);
\draw[-,dashed](3.95,0.05)--(3.05,0.95);
\draw[-,dashed](4.95,0.05)--(4.05,0.95);
\draw[->](2,0.95)--(2,0.05);
\draw[->](3,0.95)--(3,0.05);
\draw[->](0,0.95)--(0,0.05);
\draw[->](1,0.95)--(1,0.05);
\draw[->](0.05,0)--(0.95,0);
\draw[->](0.05,1)--(0.95,1);
\draw[->](1.05,0)--(1.95,0);
\draw[->](1.05,1)--(1.95,1);
\draw[->](3.05,0)--(3.95,0);
\draw[->](3.05,1)--(3.95,1);
\draw[->](4,0.95)--(4,0.05);
\draw[->](4.05,0)--(4.95,0);
\draw[->](4.05,1)--(4.95,1);
\draw[->](5,0.95)--(5,0.05);
\node()at(6,0){\tiny{$\circ$}};
\node()at(7.5,0){$\cdots$};
\node()at(9,0){\tiny{$\circ$}};
\node()at(7,0){\tiny{$\circ$}};
\node()at(8,0){\tiny{$\circ$}};
\node()at(6,-0.15){\tiny{$2u+v$}};
\node()at(9,-0.15){\tiny{$2u+1$}};
\draw[<-](5.95,0)--(5.05,0);
\draw[<-](6.95,0)--(6.05,0);
\draw[<-](8.95,0)--(8.05,0);
\end{tikzpicture}$$
Let $M$ be the injective module corresponding to the vertex $2u+1$. Then the path algebra $A(u)_{v+1}$ associated to $Q(u)_{v+1}$ is isomorphic to the one-point extension algebra $A(u)_{v}[M]$. On the other hand, the right perpendicular category $M^{\perp}$ in $\textup{mod}\mbox{-}A(u)_{v}$ is derived equivalent to the module category of the algebra associated to $Q(u)_{v-1}$. Thus by Proposition \ref{one-point-extension}, we have \begin{align}\label{chi-ex}\chi_{A(u)_{v+1}}(\lambda)=(1+\lambda)\cdot\chi_{A(u)_{v}}(\lambda)
-\lambda\cdot\chi_{A(u)_{v-1}}(\lambda).\end{align} Hence,
$(\chi_{A(u)_{v+a}}(\lambda))_{v\geq 1}\in V$ for any $a\geq -1$.
We consider the following three cases.

(1) Assume $u=3i$.
By Lemma \ref{jj+1j+2} we obtain
\begin{align*}\phi_{v}(\lambda):&=\frac{(\lambda+1)\big(\lambda^{2u+v+2}+(-1)^{u}\sum_{j=2}^{v}\lambda^{u+j}+1\big)}
    {\lambda^3+1}\\
&=\frac{\lambda+1}{\lambda^3+1}
\big(\lambda^{2u+v+2}+(-1)^{u}\frac{\lambda^{u+2}-\lambda^{u+v+1}}{1-\lambda}+1  \big)\\
&=\frac{\lambda+1}{\lambda^3+1}
\big(\lambda^{2u+2}+\frac{(-\lambda)^{u+1}}{1-\lambda}\big)\cdot\lambda^v
+\frac{\lambda+1}{\lambda^3+1}\big(1+\frac{(-\lambda)^{u+2}}{1-\lambda}\big)\in V.
\end{align*}
Hence it suffices to show that $\chi_{A(u)_{v}}(\lambda)=\phi_{v}(\lambda)$ for $v=1,2$.

By Proposition \ref{d=1} we know that $\chi_{A(u)_{1}}(\lambda)=\phi_{1}(\lambda)$. For $v=2$,
by Proposition \ref{d=0} and Proposition \ref{d=1} and formula \ref{chi-ex}, we get
\begin{align*}\chi_{A(u)_2}&=(\lambda+1)\cdot\chi_{A(u)_1}-\lambda\cdot\chi_{A(u)_0}\\
&=(\lambda+1)\frac{(\lambda+1)(\lambda^{2u+3}+1)}{\lambda^3+1}-\lambda\frac{\lambda+1}{\lambda^3+1}(\lambda^{2u+2}+(-1)^{u+1}\lambda^{u+1}+1)\\
&=\frac{\lambda+1}{\lambda^3+1}\big((\lambda+1)(\lambda^{2u+3}+1)-\lambda(\lambda^{2u+2}+(-1)^{u+1}\lambda^{u+1}+1)\big)\\
&=\frac{\lambda+1}{\lambda^3+1}(\lambda^{2u+4}+(-1)^{u}\lambda^{u+2}+1)\\
&=\phi_{2}(\lambda).
\end{align*}
We are done.

(2) If $u=3i+1$, by similar argument as in (1) we know that
$$\phi_{v}(\lambda):=\frac{(\lambda+1)
    \big(\lambda^{2u+v+2}+(-1)^{u+1}\sum_{j=0}^{v}\lambda^{u+j+1}+1\big)}{\lambda^3+1}\in V,$$
and $\chi_{A(u)_{1}}(\lambda)=\phi_{1}(\lambda)$ by Proposition \ref{d=1}. Moreover,
\begin{align*}\chi_{A(u)_2}&=(\lambda+1)\cdot\chi_{A(u)_1}-\lambda\cdot\chi_{A(u)_0}\\
&=(\lambda+1)\frac{(\lambda+1)(\lambda^{u+1}+(-1)^{u+1})(\lambda^{u+2}+(-1)^{u+1})}{\lambda^3+1}\\
&\qquad-\lambda\frac{\lambda+1}{\lambda^3+1}(\lambda^{2u+2}+(-1)^{u+1}\lambda^{u+1}+1)\\
&=\frac{\lambda+1}{\lambda^3+1}[(\lambda+1)(\lambda^{2u+3}+(-1)^{u+1}\lambda^{u+2}+(-1)^{u+1}\lambda^{u+1}+1)\\
&\qquad-\lambda(\lambda^{2u+2}+(-1)^{u+1}\lambda^{u+1}+1)]\\
&=\frac{\lambda+1}{\lambda^3+1}(\lambda^{2u+4}+(-1)^{u+1}\lambda^{u+3}
+(-1)^{u+1}\lambda^{u+2}+(-1)^{u+1}\lambda^{u+1}+1)\\
&=\phi_{2}(\lambda).
\end{align*}
Then we are done.

(3) If $u=3i+2$, it is easy to see that
$$\phi_{v}(\lambda):=\frac{(\lambda+1)\big(\lambda^{u+1}-(-1)^{u+1}\big)
    \big(\lambda^{u+v}-(-1)^{u+1}\big)}{\lambda^3+1}\in V.$$
    By Propositions \ref{d=0} and \ref{d=1} we have $\phi_{1}(\lambda)=\chi_{A(u)}(\lambda)$ and $\phi_{2}(\lambda)=\chi_{A(u)_{1}}(\lambda)$. Observe that both of $(\phi_{v}(\lambda))_{v\geq 1}$ and $(\chi_{A(u)_{v-1}}(\lambda))_{v\geq 1}$ belong to $V$. Hence $\chi_{A(u)_{v}}(\lambda)=\phi_{v+1}(\lambda)$ for any $v\geq1$.
\end{proof}

\section{Derived equivalences between one-branch extension algebras and Nakayama algebras}

In this section, we will show that four different kinds of one-branch extension algebras $A(u)_v,\ A(u)^v,\ _vA(u)$ and $^vA(u)$ are all derived equivalent to the same Nakayama algebra $N(2u+v,u+v+1)$, then finish the proof of the main Theorem \ref{theorem}. As an application, we obtain the Coxeter polynomials for a half of Nakayama algebras, namely, those Nakayama algebras $N(n,r)$ with $2r\geq n+2$. As a byproduct, we find a new kind of symmetry between Nakayama algebras $N(2r-1,r)$ and $N(2r-1,r+1)$.

\subsection{Derived equivalences between one-branch extension algebras of $A(u)$}

Let $N(n,r)$ $(2\leq r\leq n-1)$ be the Nakayama algebra given as the path algebra of the equioriented quiver
$$\xymatrix{
  1& \ar[l]_-{x}  2&  \ar[l]_-{x}  3&  \ar[l]_-{x}  \cdots  &\ar[l]_-{x}  n-1& \ar[l]_-{x}  n  }$$
of type $\vec{A}_n$ subject to all relations $x^r=0$.

\begin{prop}\label{main1} For $u\geq 1, v\geq 1$,
we have the following derived equivalences:
$$\textup{D}^{b}(^vA(u))\simeq\textup{D}^{b}(A(u)_v)\simeq\textup{D}^{b}(N(2u+v,u+v+1)).$$
\end{prop}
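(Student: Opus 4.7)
The plan is to prove each equivalence in the chain separately using Ladkani's Proposition \ref{Ladkani}. First, I would observe that each of the bound quivers $Q(u)_v$ and $^vQ(u)$ is the Hasse quiver of a finite poset---the first and fourth posets shown in Figure \ref{fourHasse} respectively---so that $A(u)_v\cong I(X_1)$ and $^vA(u)\cong I(X_4)$ as incidence algebras. Each of these posets is the zigzag of the rectangle $\vec A_2\otimes\vec A_u$ with a linear chain of $v$ elements attached at one extremum.

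For each $i\in\{1,4\}$, I would choose a closed subset $Y_i\subseteq X_i$ consisting of one of the two rows of the rectangular zigzag together with the appropriate portion of the tail chain, so that the complement $U_i=X_i\setminus Y_i$ consists of the other row together with the remaining tail elements. Applying Proposition \ref{Ladkani} then yields derived equivalences $\textup{D}^{b}(A(u)_v)\simeq\textup{D}^{b}(A_{Y_1})$ and $\textup{D}^{b}(^vA(u))\simeq\textup{D}^{b}(A_{Y_4})$. It then suffices to show that, for suitable choices of $Y_1$ and $Y_4$, both algebras $A_{Y_1}$ and $A_{Y_4}$ are isomorphic to the Nakayama algebra $N(2u+v,u+v+1)$; the total count $|Y_i|+|U_i|=2u+v$ matches the number of vertices, and a dimension count gives $\dim A_{Y_i}=(u+v+1)(2u+v)-\binom{u+v+1}{2}=\dim N(2u+v,u+v+1)$, which is a useful consistency check.

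The main obstacle is this final identification of $A_{Y_i}$ with the Nakayama algebra. One must arrange the $2u+v$ vertices of $A_{Y_i}$ in a linear order so that the three kinds of generating arrows---cover relations inside $Y_i$, reversed cover relations inside $U_i$, and the connecting arrows $u\to y$---assemble into the equioriented quiver of $\vec A_{2u+v}$. Moreover, the multiplication rule $e_{uy}\cdot e_{yy'}=e_{uy'}$ if $y'<u$ and zero otherwise must be shown to force exactly the Nakayama relation $x^{u+v+1}=0$. The key combinatorial point is that the longest nonzero composition in $A_{Y_i}$ has length $u+v$, matching the Loewy length of the Nakayama algebra: any would-be longer composition traverses enough rows and tail elements that at some intermediate step a required comparability $y<u$ (or its analogue for the $e_{u'u}\cdot e_{uy}$ rule) must fail. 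Performing this case analysis carefully for both the rectangle core and the attached tail, and for the two sides $Y_1$ and $Y_4$, is the technical heart of the proof; once done, chaining the two equivalences through $\textup{D}^{b}(N(2u+v,u+v+1))$ gives the statement of the proposition.
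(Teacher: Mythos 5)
Your proposal is correct and is essentially the paper's argument: everything rests on Ladkani's Proposition \ref{Ladkani}, together with the explicit identification of a flipped algebra $A_Y$ (closed subset given by one row of the rectangle plus the adjacent part of the tail) with the Nakayama algebra $N(2u+v,u+v+1)$, exactly as you outline, and the identification you defer does go through (for $Q(u)_v$ the right closed subset is the row not meeting the tail, so the ``portion of the tail'' is empty or, dually, all of it). The only organizational difference is that the paper flips the single poset of $^vQ(u)$ twice---once along the tail to recover $A(u)_v$, once along the tail plus one row to get $N(2u+v,u+v+1)$---whereas you flip each of the two posets once toward the Nakayama algebra, which makes you verify the Nakayama identification on both sides but changes nothing essential.
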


\begin{proof}
Consider the poset $X$ corresponding to $^vQ(u)$ as follow:
$$\begin{tikzpicture}
\node()at(7.5,-1.75){\tiny{$\bullet$}};
\node()at(6.9,-1.75){\tiny{$2u+v$}};
\node()at(7.25,-1.25){\tiny{$\bullet$}};
\node()at(6.6,-1.25){\tiny{$2u+v-1$}};
\node()at(6.5,0.25){\tiny{$\bullet$}};
\node()at(5.9,0.25){\tiny{$u+v+2$}};
\node()at(6.25,0.75){\tiny{$\bullet$}};
\node()at(5.65,0.75){\tiny{$u+v+1$}};
\draw[-](7.5,-1.75)--(7.25,-1.25);
\draw[-](7.25,-1.25)--(7,-0.75);
\draw[-](6.5,0.25)--(6.75,-0.25);
\draw[-](6.5,0.25)--(6.25,0.75);
\node()at(6.75,-0.25){\tiny{$\bullet$}};
\node()at(7,-0.75){\tiny{$\bullet$}};
\draw[dotted](7,-0.75)--(6.75,-0.25);
\draw[-](7.5,-1.75)--(8,-1.5);
\draw[-](7.25,-1.25)--(7.75,-1);
\draw[-](6.5,0.25)--(7,0.5);
\draw[-](6.75,1)--(6.25,0.75);
\draw[-](7,-0.75)--(7.5,-0.5);
\draw[-](6.75,-0.25)--(7.25,0);
\node()at(8,-1.5){\tiny{$\bullet$}};
\node()at(8.35,-1.5){\tiny{$u+v$}};
\node()at(7.75,-1){\tiny{$\bullet$}};
\node()at(8.35,-1){\tiny{$u+v-1$}};
\node()at(7,0.5){\tiny{$\bullet$}};
\node()at(7.35,0.5){\tiny{$v+2$}};
\node()at(6.75,1){\tiny{$\bullet$}};
\node()at(7.1,1){\tiny{$v+1$}};
\draw[-](8,-1.5)--(7.75,-1);
\draw[-](7.75,-1)--(7.5,-0.5);
\draw[-](7,0.5)--(7.25,0);
\draw[-](7,0.5)--(6.75,1);
\draw[dotted](7.25,0)--(7.5,-0.5);
\node()at(7.25,0){\tiny{$\bullet$}};
\node()at(7.5,-0.5){\tiny{$\bullet$}};
\node()at(6.5,1.5){\tiny{$\bullet$}};
\node()at(6.65,1.5){\tiny{$v$}};
\node()at(5.75,3){\tiny{$\bullet$}};
\node()at(5.9,3){\tiny{$1$}};
\draw[-](6.5,1.5)--(6.25,2);
\draw[-](6.5,1.5)--(6.75,1);
\draw[-](5.75,3)--(6,2.5);
\draw[dotted](6.25,2)--(6,2.5);
\node()at(6.25,2){\tiny{$\bullet$}};
\node()at(6,2.5){\tiny{$\bullet$}};
\end{tikzpicture}$$
We have $$\textup{D}^b(^vA(u))\simeq\textup{D}^b(I(X)).$$

Let $Y=\{1,2,\cdots,v\}$, which is a closed subset of $X$.
By Proposition \ref{Ladkani}, we have
$$\textup{D}^b(^vA(u))\simeq\textup{D}^b(I(X))\simeq\textup{D}^b(A_Y)\simeq\textup{D}^b(A(u)_v).$$
Similarly, let $Z=\{1,2,\cdots,v+u\}$, which is also a closed subset of $X$.
Note that $A_Z$ is the Nakayama algebra $N(2u+v,u+v+1)$. By Proposition \ref{Ladkani}, we have
$$\begin{array}{ll}\textup{D}^b(^vA(u))\simeq\textup{D}^b(I(X))
\simeq\textup{D}^b(A_Z)\simeq\textup{D}^{b}(N(2u+v,u+v+1)).\end{array}$$
\end{proof}

\begin{prop}\label{main2}
For $u\geq 1, v\geq 1$, we have the following derived equivalences:
$$\textup{D}^{b}(_vA(u))\simeq\textup{D}^{b}(A(u)^v)\simeq\textup{D}^{b}(N(2u+v,u+v+1))$$
\end{prop}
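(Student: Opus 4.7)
The proof plan is to mirror the argument of Proposition \ref{main1} using a ``dual'' choice of poset. Specifically, I would consider the poset $X'$ whose Hasse diagram corresponds to the bound quiver $_vQ(u)$ (one of the four diagrams in Figure \ref{fourHasse}). By the correspondence between bound quivers and incidence algebras, $\textup{D}^b({}_vA(u)) \simeq \textup{D}^b(I(X'))$.

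For the first derived equivalence, I would take the closed subset $Y' \subseteq X'$ consisting of the $v$ vertices forming the chain attached to the rectangle, so that the complement $U' = X' \setminus Y'$ is precisely the rectangle. By Proposition \ref{Ladkani}, $\textup{D}^b(I(X')) \simeq \textup{D}^b(A_{Y'})$. One then checks, by writing out the basis $\{e_{yy'}, e_{u'u}, e_{uy}\}$ with its multiplication as in Section 2.1, that the bound quiver of $A_{Y'}$ coincides with that of $A(u)^v$: the ``new'' arrows $e_{uy}$ (for $u$ in the rectangle above some $y$ in the chain) reattach the $v$-point chain from the bottom-left corner onto the top-right corner of the rectangle, so that $A_{Y'} \cong A(u)^v$. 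This yields $\textup{D}^b({}_vA(u)) \simeq \textup{D}^b(A(u)^v)$.

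For the second derived equivalence with the Nakayama algebra, I would take the larger closed subset $Z' \subseteq X'$ consisting of the $v$-point chain together with the $u$ vertices of the bottom row of the rectangle, so that $|Z'| = u+v$ and its complement $U'$ consists of the $u+1$ vertices of the top row together with possibly some bordering vertices (chosen so that the Ladkani construction produces a single linear path). Another application of Proposition \ref{Ladkani} gives $\textup{D}^b(I(X')) \simeq \textup{D}^b(A_{Z'})$, and a direct inspection of the basis of $A_{Z'}$ shows that its bound quiver is a linear equioriented $\vec{A}_{2u+v}$ quiver with all paths of length $u+v+1$ equal to zero, namely $A_{Z'} \cong N(2u+v, u+v+1)$.

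The main obstacle, as in Proposition \ref{main1}, is the combinatorial verification that the closed subsets chosen really produce $A(u)^v$ and $N(2u+v, u+v+1)$ on the nose. This requires carefully listing all basis elements $e_{yy'}, e_{u'u}, e_{uy}$ and the composition rules prescribed in Section 2.1, and matching them arrow-by-arrow (including all commutativity and nilpotency relations coming from products that vanish) against the defining bound quivers of the target algebras. This is a routine but delicate piece of bookkeeping that parallels the analogous check implicit in the proof of Proposition \ref{main1}, and once it is completed the claimed equivalences follow immediately.
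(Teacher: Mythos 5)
There is a genuine gap, and it sits exactly where you dismiss the work as ``routine bookkeeping'': Ladkani's construction never changes \emph{which} pairs of vertices carry a nonzero morphism space --- the connecting basis of $A_Y$ is $\{e_{uy}\mid y<u\}$, so the support of the connecting bimodule is the same set of comparable pairs as in $I(X)$, only the direction is reversed. In the poset of $^vQ(u)$ used in Proposition \ref{main1} the $v$-chain lies below the \emph{source} of the rectangle, hence below every rectangle vertex, and flipping really does reattach the full chain after the sink, giving $A(u)_v$. In the poset $X'$ of ${}_vQ(u)$, however, the chain lies below the bottom-left corner only: the chain vertices are comparable to the bottom row and incomparable to the entire top row. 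Consequently in $A_{Y'}$ the elements $e_{uy}$ exist only for $u$ in the bottom row; working out the Gabriel quiver (a connecting element $e_{uy}$ is an arrow only for $u$ maximal in $U'$ and $y$ minimal in $Y'$) one finds the rectangle with the chain glued to the \emph{bottom-right} corner, together with zero relations forcing every path from the top row into the chain to vanish. This algebra is not $A(u)^v$: in $A(u)^v$ the chain receives nonzero morphisms from the whole top row and none from the bottom row, and on the level of quivers $A(u)^v$ has $u$ vertices with two outgoing arrows while $A_{Y'}$ has only $u-1$. Your picture of the chain being ``reattached onto the top-right corner'' would require the construction to create Hom spaces between vertices that are incomparable in $X'$, which it never does; the careful check you defer actually refutes the claimed isomorphism rather than confirming it.

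The second step fails as well: $Z'=\{\text{chain}\}\cup\{\text{bottom row}\}$ is not closed, since each bottom-row vertex has the top-row vertex of its column strictly below it in $X'$ (also the complement has $u$, not $u+1$, vertices), so Proposition \ref{Ladkani} does not apply, and the hedge ``plus some bordering vertices'' cannot be realized: essentially the only way to split $X'$ into a closed subset and a complement that are both chains is to take $Z'$ to be the top row, and then $A_{Z'}$ is an equioriented $\vec{A}_{2u+v}$ whose nilpotency is not uniform (the path of length $u+v-1$ through the chain and bottom row survives while certain paths of length $u+1$ across the junction vanish), hence is not $N(2u+v,u+v+1)$. This obstruction is precisely why the paper proves Proposition \ref{main2} by a completely different route: it works in $\underline{\textup{vect}}\mbox{-}\mathbb{X}$ for weight type $(2,3,u+v+1)$, realizes $A(u+v)$ and $N(2u+2v,u+v+1)$ by the two Kussin--Lenzing--Meltzer tilting objects, and identifies $\textup{D}^b({}_vA(u))$ and $\textup{D}^b(A(u)^v)$ with $\textup{D}^b(N(2u+v,u+v+1))$ as left, respectively right, perpendicular categories of exceptional sequences of extension bundles, using $E\langle\vec{x}_2+l\vec{x}_3\rangle=E\langle l\vec{x}_3\rangle(\vec{x}_1-\vec{x}_2)$ and invariance under the Serre functor. (Incidentally, since ${}_vA(u)\cong(A(u)^v)^{\textup{op}}$ and $N(n,r)^{\textup{op}}\cong N(n,r)$, the whole proposition reduces to the single equivalence $\textup{D}^b(A(u)^v)\simeq\textup{D}^b(N(2u+v,u+v+1))$ --- but that is exactly the equivalence a single Ladkani flip cannot deliver here.)
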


\begin{proof}
We consider the weighted projective line $\mathbb{X}$ of weight type $(2,3,u+v+1)$. Let $$T_1=\big(\bigoplus_{i=0}^{u+v-1} E\langle i\vec{x}_3\rangle\big)\oplus \big(\bigoplus_{i=0}^{u+v-1}E\langle\vec{x}_2+i\vec{x}_3\rangle\big),$$ which has the following shape:
$$\tiny{\xymatrix{E\ar[r]\ar[d]&E\langle\vec{x}_3\rangle\ar[r]\ar[d]&
\cdots\ar[r]&E\langle(u+v-2)\vec{x}_3\rangle\ar[r]\ar[d]&E\langle(u+v-1)\vec{x}_3\rangle\ar[d]\\
E\langle{\vec{x}_2}\rangle\ar[r]&E\langle\vec{x}_2+\vec{x}_3\rangle\ar[r]&\cdots\ar[r]&
E\langle\vec{x}_2+(u+v-2)\vec{x}_3\rangle\ar[r]&E\langle\vec{x}_2+(u+v-1)\vec{x}_3\rangle.}}$$
According to \cite[Theorem 6.1]{KLM2013},
$T_1$ is a tilting object in $\textup{\underline{vect}}\mbox{-}\mathbb{X}$, whose endomorphism algebra is isomorphic to $A(u+v)$.

Let $$T_2=\big(\bigoplus_{i=0}^{u+v-1} E\langle i\vec{x}_3\rangle\big)\oplus \big(\bigoplus_{i=0}^{u+v-1}\mathbb{S}(E\langle{i\vec{x}_3}\rangle)\big),$$
which has the following shape:
$$\tiny{\xymatrix{E\ar[r]&E\langle\vec{x}_3\rangle\ar[r]&\cdots\ar[r]
&E\langle(u+v-2)\vec{x}_3\rangle\ar[r]&E\langle(u+v-1)\vec{x}_3\rangle\ar[dllll]\\
\mathbb{S}E\ar[r]&\mathbb{S}(E\langle\vec{x}_3\rangle)\ar[r]
&\cdots\ar[r]&\mathbb{S}(E\langle(u+v-2)\vec{x}_3\rangle)\ar[r]
&\mathbb{S}(E\langle(u+v-1)\vec{x}_3\rangle).}}$$
According to \cite[Proposition 6.9]{KLM2013},
$T_2$ is also a tilting object in $\textup{\underline{vect}}\mbox{-}\mathbb{X}$, whose endomorphism algebra is isomorphic to the Nakayama algebra $N(2u+2v,u+v+1)$.

Therefore, we have a sequence of equivalences
$$\textup{D}^{b}(A(u+v))\simeq
\textup{\underline{vect}}\mbox{-}\mathbb{X}\simeq\textup{D}^{b}(N(2u+2v,u+v+1)).$$

Let $\mathcal{C}$ be the triangulated subcategory of $\textup{\underline{vect}}\mbox{-}\mathbb{X}$ generated by the exceptional sequence $(E,E\langle\vec{x}_3\rangle,\cdots,E\langle(v-1)\vec{x}_3\rangle)$.
By considering the left perpendicular category of $\mathcal{C}$ in $\textup{\underline{vect}}\mbox{-}\mathbb{X}$, we obtain
\begin{equation}\label{_vA and Nakayama}\textup{D}^{b}(_vA(u))
\simeq^{\perp}\mathcal{C}\simeq\textup{D}^{b}(N(2u+v,u+v+1)).
\end{equation}

Let $\mathcal{D}$ be the triangulated subcategory of $\textup{\underline{vect}}\mbox{-}\mathbb{X}$ generated by the exceptional sequence
$(E\langle\vec{x}_2+u\vec{x}_3\rangle, E\langle\vec{x}_2+(u+1)\vec{x}_3\rangle, \cdots, E\langle\vec{x}_2+(u+v-1)\vec{x}_3\rangle)$.
Then we have
\begin{equation}\label{D perp and A_v}
\mathcal{D}^{\perp}\simeq\textup{D}^{b}(A(u)^v).
\end{equation}
On the other hand, by \cite[Proposition 4.15]{KLM2013}, for any $u\leq l\leq u+v-1$ we have
$$E\langle\vec{x}_2+l\vec{x}_3\rangle=E\langle l\vec{x}_3\rangle(\vec{x}_1-\vec{x}_2).$$
Since the grading shift functor $(\vec{x}_1-\vec{x}_2)$ and the Serre functor $\mathbb{S}$ are both auto-equivalences on $\textup{\underline{vect}}\mbox{-}\mathbb{X}$, we obtain the following equivalences:
\begin{equation}\label{D perp and Nakayama}\begin{array}{ll}\mathcal{D}^{\perp}&\simeq\{E\langle l\vec{x}_3\rangle|u\leq l\leq u+v-1\}^{\perp}\\
&\simeq\{\mathbb{S}(E\langle l\vec{x}_3\rangle)|u\leq l\leq u+v-1\}^{\perp}\\
&\simeq\textup{D}^{b}(N(2u+v,u+v+1)).\end{array}
\end{equation}

Combining with \eqref{_vA and Nakayama}, \eqref{D perp and A_v} and \eqref{D perp and Nakayama}
we have $$\textup{D}^{b}(_vA(u))\simeq\textup{D}^{b}(N(2u+v,u+v+1))\simeq\textup{D}^{b}(A(u)^v).$$
The proof is complete.
\end{proof}

Combining with Proposition \ref{main1} and Proposition \ref{main2}, we finish the proof of Theorem \ref{theorem}.

Observe that the Coxeter polynomial is an invariant under derived equivalences, that is, two derived equivalent algebras share the same Coxeter polynomials. As an immediate consequence of Theorem \ref{theorem}, we have the following result.

\begin{cor} For any $u\geq 1, v\geq 1$, we have
$$\chi_{^vA(u)}(\lambda)=\chi_{A(u)_v}(\lambda)=\chi_{_vA(u)}(\lambda)=\chi_{A(u)^v}(\lambda).$$
\end{cor}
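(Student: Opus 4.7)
The plan is to obtain this corollary as an immediate consequence of Theorem \ref{theorem}, using the fact that the Coxeter polynomial is an invariant of the derived category. Recall from the preliminary section that for an algebra $A$ of finite global dimension the Coxeter transformation $\Phi_A$ coincides (up to the identification $K_0(A)\simeq K_0(\textup{D}^b(A))$) with the linear map on $K_0(\textup{D}^b(A))$ induced by the Auslander--Reiten translation $\tau_{\textup{D}^b(A)}$. Hence $\chi_A(\lambda)$ is the characteristic polynomial of an endomorphism of the Grothendieck group that is intrinsic to the triangulated category $\textup{D}^b(A)$.

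First I would observe that each of the four algebras $A(u)_v$, $A(u)^v$, $_vA(u)$, $^vA(u)$ is the path algebra of a finite acyclic bound quiver, so each is of finite global dimension and the Coxeter polynomial is well defined. Then I would invoke Theorem \ref{theorem} (equivalently, Propositions \ref{main1} and \ref{main2}), which provides triangle equivalences
\[
\textup{D}^{b}(A(u)_v)\simeq\textup{D}^{b}(A(u)^v)\simeq\textup{D}^{b}(_vA(u))\simeq\textup{D}^{b}(^vA(u)).
\]
Any such triangle equivalence $F$ induces an isomorphism of Grothendieck groups $K_0(F)$ that intertwines the Auslander--Reiten translations (since $\tau$ is given on $\textup{D}^b$ by the Serre functor composed with a shift, both being preserved by triangle equivalences of bounded derived categories of finite-dimensional algebras of finite global dimension). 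Consequently $K_0(F)$ conjugates the two Coxeter transformations into one another, and the characteristic polynomial $\chi_A(\lambda)$ is preserved.

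Applying this remark to the three equivalences in Theorem \ref{theorem} yields the four-fold equality
\[
\chi_{^vA(u)}(\lambda)=\chi_{A(u)_v}(\lambda)=\chi_{_vA(u)}(\lambda)=\chi_{A(u)^v}(\lambda),
\]
which is exactly the statement to be proved. Since the derived invariance of the Coxeter polynomial is standard and Theorem \ref{theorem} is already in hand, there is no substantive obstacle; the only point requiring a brief verification is the intertwining of $\Phi$ with any derived equivalence, which is immediate from the intrinsic characterization of $\tau_{\textup{D}^b(A)}$ recalled above.
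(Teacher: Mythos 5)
Your proposal is correct and follows the same route as the paper: the paper deduces the corollary immediately from Theorem \ref{theorem} together with the standard fact that derived equivalent algebras share the same Coxeter polynomial. Your extra remark identifying $\Phi_A$ with the action of $\tau_{\textup{D}^b(A)}$ on $K_0$ is exactly the justification the paper's preliminaries implicitly rely on, so there is nothing to change.
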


\subsection{Coxeter polynomials of Nakayama algebras}

In Propositions \ref{main1} and \ref{main2}, we have established derived equivalences between one-branch extension algebras and Nakayama algebras. As an application, we can obtain the Coxeter polynomials for all the Nakayama algebras $N(n,r)$ with $2r\geq n+2$.

\begin{prop}\label{corollaryofpolynomial}
Assume that $2r\geq n+2$. The Coxeter polynomial $\chi_{N(n,r)}(\lambda)$ of the Nakayama algebra $N(n,r)$ is given as below.
\begin{itemize}
\item[(1)] For $2r=n+2$,
$$\chi_{N(n,r)}(\lambda)=\left\{\begin{array}{lcl}\frac{(\lambda+1)\big(\lambda^{3n-3r+6}-(-1)^{n-r}\big)}
{(\lambda^3+1)\big(\lambda^{n-r+2}-(-1)^{n-r}\big)},&&n-r\not\equiv 1\,(\mod 3),\\\\
\frac{(\lambda+1)\big(\lambda^{n-r+2}-(-1)^{n-r}\big)^2}{\lambda^3+1},&&n-r\equiv 1\,(\mod 3).\end{array}\right.$$
\item[(2)] For $2r\geq n+3$,
$$\chi_{N(n,r)}(\lambda)=\left\{\begin{array}{lcl}\frac{(\lambda+1)
\big(\lambda^{n+2}+(-1)^{n-r}\sum\limits_{j=0}^{2r-n-2}\lambda^{n-r+2+j}+1\big)}{\lambda^3+1},
&&n-r\equiv 0\,(\mod 3),\\\\
\frac{(\lambda+1)\big(\lambda^{n-r+2}-(-1)^{n-r}\big)\big(\lambda^{r}-(-1)^{n-r}\big)}{\lambda^3+1},
&&n-r\equiv 1\,(\mod 3),\\\\
\frac{(\lambda+1)\big(\lambda^{n+2}+(-1)^{n-r+1}\sum_{j=2}^{2r-n-2}\lambda^{n-r+1+j}+1\big)}{\lambda^3+1},
&&n-r\equiv 2\,(\mod 3).\end{array}\right.$$
\end{itemize}
\end{prop}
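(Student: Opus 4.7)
The plan is to reduce everything to the Coxeter polynomial of $A(u)_v$ by invoking the derived equivalence $\textup{D}^b(N(2u+v, u+v+1)) \simeq \textup{D}^b(A(u)_v)$ established in Propositions \ref{main1} and \ref{main2}, together with the fact that the Coxeter polynomial is a derived invariant (since $\Phi_A$ represents the Auslander--Reiten translate on $K_0$, which is preserved under triangle equivalences). Concretely, for a Nakayama algebra $N(n,r)$ with $2r \geq n+2$ I would set
$$u := n-r+1, \qquad v := 2r-n-2,$$
so that $2u+v = n$ and $u+v+1 = r$, while $v \geq 0$ is equivalent to $2r \geq n+2$. Then $\chi_{N(n,r)}(\lambda) = \chi_{A(u)_v}(\lambda)$ whenever $v \geq 1$, and $\chi_{N(n,r)}(\lambda) = \chi_{A(u)}(\lambda)$ in the boundary case $v = 0$.

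For case (1), namely $2r = n+2$ and thus $v=0$, I would quote Proposition \ref{d=0} directly with $u = r-1 = n-r+1$. The substitutions $3u+3 = 3r = 3n-3r+6$ (using $n = 2r-2$), $u+1 = r = n-r+2$, and $(-1)^{u+1} = (-1)^r = (-1)^{n-r}$ convert the two branches of that proposition into the two branches stated: the dichotomy $u \equiv 2 \pmod 3$ vs. $u \not\equiv 2 \pmod 3$ becomes $n-r \equiv 1 \pmod 3$ vs. $n-r \not\equiv 1 \pmod 3$.

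For case (2), namely $2r \geq n+3$ and thus $v \geq 1$, I would apply Proposition \ref{coxeterpoly}. The three branches there are indexed by $u \bmod 3$: since $u = n-r+1$, we have $u \equiv 1, 2, 0 \pmod 3$ precisely when $n-r \equiv 0, 1, 2 \pmod 3$ respectively. With the identities $2u+v+2 = n+2$, $u+1 = n-r+2$, $u+v+1 = r$, $(-1)^{u+1} = (-1)^{n-r}$, and the straightforward reindexing
$$\sum_{j=0}^{v}\lambda^{u+j+1} = \sum_{j=0}^{2r-n-2}\lambda^{n-r+2+j}, \qquad \sum_{j=2}^{v}\lambda^{u+j} = \sum_{j=2}^{2r-n-2}\lambda^{n-r+1+j},$$
each of the three formulas of Proposition \ref{coxeterpoly} translates verbatim into the corresponding branch of the proposition.

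There is no genuine obstacle here beyond careful bookkeeping: the actual computational work has already been done in Proposition \ref{coxeterpoly} (via the one-point extension recursion and the auxiliary three-term-relation vector space $V$), so the only risk is a sign or index error when translating the parity congruences and the exponent ranges from the variables $(u,v)$ to $(n,r)$. To guard against this I would verify each of the five cases (the two sub-cases of (1) and the three sub-cases of (2)) by separately checking the parity of $(-1)^{u+1}$ versus $(-1)^{n-r}$, the exponent endpoints, and the trivariate congruence $u \bmod 3 \leftrightarrow (n-r) \bmod 3$; a low-value sanity check on a small example such as $N(4,3)$ (giving $u=2, v=0$) or $N(5,4)$ (giving $u=2, v=1$) would confirm both the derived equivalence and the formula before committing to the general statement.
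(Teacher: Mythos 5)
Your proposal is correct and follows essentially the same route as the paper: invoke the derived equivalence $\textup{D}^{b}(A(u)_v)\simeq\textup{D}^{b}(N(2u+v,u+v+1))$ from Proposition \ref{main1}, substitute $u=n-r+1$, $v=2r-n-2$, and read off the formulas from Proposition \ref{d=0} when $v=0$ and Proposition \ref{coxeterpoly} when $v\geq 1$, using derived invariance of the Coxeter polynomial. Your index and sign translations (including the correspondence $u\equiv 1,2,0 \leftrightarrow n-r\equiv 0,1,2 \pmod 3$) match the paper's statement exactly.
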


\begin{proof}
By Proposition \ref{main1}, for any $u\geq1, v\geq 1$, we have $$\textup{D}^{b}(A(u)_v))\simeq\textup{D}^{b}(N(2u+v,u+v+1)).$$ Let $r=u+v+1$ and $n=2u+v$. That is, $u=n-r+1$ and $v=2r-n-2$. Hence,
$$\textup{D}^{b}(A(n-r+1)_{2r-n-2})\simeq\textup{D}^{b}(N(n,r)).$$ Then the results follow from  Proposition \ref{d=0} for $2r=n+2$ and from Proposition \ref{coxeterpoly} for $2r\geq n+3$, respectively.
\end{proof}

As a byproduct, we also find a new symmetry between Nakayama algebras.

\begin{prop}
Assume that $r\geq 1$. We have the following derived equivalence:
$$\textup{D}^{b}(N(2r-1,r))\simeq\textup{D}^{b}(N(2r-1,r+1)).$$
\end{prop}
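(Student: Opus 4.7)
My plan is to reduce the statement to a tilting construction, building on the framework established by Proposition \ref{main1} and the perpendicular category approach of Proposition \ref{main2}. Applying Proposition \ref{main1} with $u=r-1$ and $v=1$ we already have $\textup{D}^{b}(N(2r-1,r+1)) \simeq \textup{D}^{b}(A(r-1)_1)$, so it suffices to produce a derived equivalence $\textup{D}^{b}(N(2r-1,r)) \simeq \textup{D}^{b}(N(2r-1,r+1))$ directly. Note that only one of the two algebras falls within the parameterization of Proposition \ref{main1} (since solving $2u+v=2r-1$, $u+v+1=r$ forces $v=-1$), so the sought equivalence really is a new phenomenon not automatic from the Theorem.

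Write $B = N(2r-1,r+1)$, with indecomposable projectives $P_1, \ldots, P_{2r-1}$, where $P_i$ has top $S_i$ and length $\min(i,r+1)$. I would aim to construct an explicit tilting complex
$$T^{\bullet} \;=\; \bigoplus_{i=1}^{r} P_i \,\oplus\, \bigoplus_{i=r+1}^{2r-1}\bigl[\,P_{i-r} \xrightarrow{\,\cdot\, x^{r}} P_i\,\bigr],$$
where each two-term complex is concentrated in degrees $-1$ and $0$, and the differential is multiplication by the length-$r$ path in $B$ (which is nonzero, since the defining relation of $B$ is $x^{r+1}=0$). The intuition is that each two-term complex $[P_{i-r}\to P_i]$ is quasi-isomorphic to $P_i/\textup{rad}^{\,r} P_i$, a module of length $r$ that mimics the projective of index $i$ in $N(2r-1,r)$; the first $r$ summands are just the $P_i$ for $i\le r$, which are already common to both algebras.

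The verification splits into two parts. First, one checks that $T^{\bullet}$ is a tilting complex over $B$: self-orthogonality $\Hom_{\textup{D}^{b}(B)}(T_i, T_j[k]) = 0$ for $k \neq 0$, and generation of $\textup{D}^{b}(B)$ as a thick subcategory. Morphism computations for $i, j \le r$ are immediate; for $i, j > r$ and for mixed cases they reduce to computations in the homotopy category of two-term projective complexes, using Lemma \ref{fibrant} to represent derived maps by chain maps. Generation follows by showing that each simple $S_j$ (for $1 \le j \le 2r-1$) lies in the thick subcategory generated by $T^{\bullet}$, which can be seen by resolving $S_j$ in terms of the $P_i$'s and the two-term complexes. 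Second, one computes $\textup{End}_{\textup{D}^{b}(B)}(T^{\bullet})$: identify the natural arrows $T_{i+1}\to T_i$ induced by the arrows of $B$, and verify that the composite of $r$ consecutive such arrows vanishes, yielding the relation $x^r = 0$ in the endomorphism algebra. Applying Lemma \ref{tilt+equiv} then gives the desired derived equivalence.

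The hardest step will be the verification of self-orthogonality in the mixed case, namely $\Hom_{\textup{D}^{b}(B)}(T_i, T_j[1]) = 0$ when one of $T_i, T_j$ is a module and the other is a two-term complex; tracking chain-map representatives and their homotopies there is delicate. An alternative, more categorical route would be to work in $\textup{\underline{vect}}\mbox{-}\mathbb{X}$ for $\mathbb{X}$ of weight type $(2,3,r+1)$: starting from the tilting object $T_2$ of Proposition \ref{main2} whose endomorphism algebra is $N(2r,r+1)$, modify a single summand by applying an appropriate power of the Serre functor $\mathbb{S}$ or a grading shift in $\mathbb{L}$, thus producing a new tilting object whose perpendicular category realizes $\textup{D}^{b}(N(2r-1,r))$. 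One then invokes an auto-equivalence of $\textup{\underline{vect}}\mbox{-}\mathbb{X}$ relating the new exceptional summand to the one used in Proposition \ref{main2} for $N(2r-1,r+1)$ to conclude.
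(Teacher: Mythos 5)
Your central construction does not work: the complex $T^{\bullet}$ is not a tilting complex over $B=N(2r-1,r+1)$, and the failure is visible already at the first summand of the second family. The map $\xi\colon P_{1}\to P_{r+1}$ is injective with image $\operatorname{soc}P_{r+1}$, so $[P_1\to P_{r+1}]\simeq S_{r+1}^{(r)}$ in $\textup{D}^b(B)$; applying $\Hom_B(-,P_1)$ to $0\to P_1\to P_{r+1}\to S_{r+1}^{(r)}\to 0$ and using $\Hom_B(P_{r+1},P_1)=0$ while $\Hom_B(P_1,P_1)={\bf k}$, one gets $\Hom_{\textup{D}^b(B)}(T_{r+1},T_1[1])\cong\Ext^1_B(S_{r+1}^{(r)},P_1)\cong{\bf k}\neq0$ for every $r\geq2$, so self-orthogonality already fails in degree $1$. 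Moreover, for $i\geq r+2$ the differential $P_{i-r}\to P_i$ is not injective: its image is $\operatorname{rad}^{r}P_i$, of length $1$, whereas $P_{i-r}$ has length $i-r\geq2$; hence $[P_{i-r}\to P_i]$ is not quasi-isomorphic to $P_i/\operatorname{rad}^{r}P_i$ (that module has projective dimension at least $2$, so it admits no two-term projective resolution), it has $H^{-1}\cong\operatorname{rad}P_{i-r}\neq0$, and since $P_1=S_1$ embeds into the socle of this kernel you also get nonzero negative-degree morphisms $\Hom_{\textup{D}^b(B)}(T_1,T_i[-1])\cong\Hom_B\bigl(P_1,H^{-1}(T_i)\bigr)\neq0$. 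Replacing the two-term complexes by the modules $S_i^{(r)}$ you intended runs into the same $\Ext^1$ obstruction, so the subsequent endomorphism-algebra computation never gets off the ground. The alternative sketch in $\textup{\underline{vect}}\mbox{-}\mathbb{X}$ (``modify one summand of $T_2$ by a power of $\mathbb{S}$ or a grading shift'') is not specified enough to be checked and, as stated, is not a proof.

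For comparison, the paper proves the statement without any tilting computation over $B$: it uses the incidence algebra $I(X)$ of the poset underlying ${}_1Q(r-1)$ as a bridge. On one hand, Proposition \ref{main2} with $u=r-1$, $v=1$ gives $\textup{D}^b(I(X))\simeq\textup{D}^b({}_1A(r-1))\simeq\textup{D}^b(N(2r-1,r+1))$; on the other hand, Ladkani's flip (Proposition \ref{Ladkani}) applied to the closed subset $Y=\{1,\dots,r-1\}$ of $X$ yields $\textup{D}^b(I(X))\simeq\textup{D}^b(A_Y)$ with $A_Y\cong N(2r-1,r)$. Your opening observation that $N(2r-1,r)$ lies outside the family $N(2u+v,u+v+1)$ is correct and explains why some input beyond Theorem \ref{theorem} is needed, but that input must be of this kind (or a correctly verified tilting object --- note that the tilting objects of Section 5 have summands of a rather different shape, e.g.\ modules with a common top and varying lengths, not of constant length $r$). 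As it stands, your argument has a genuine gap at its key step.
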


\begin{proof}
Consider the poset $X$ with the following Hasse diagram:
$$\begin{tikzpicture}
\node()at(5.5,-1.75){\tiny{$\bullet$}};
\node()at(4.95,-1.75){\tiny{$2r-1$}};
\node()at(5.25,-1.25){\tiny{$\bullet$}};
\node()at(4.7,-1.25){\tiny{$2r-2$}};
\node()at(4.5,0.25){\tiny{$\bullet$}};
\node()at(4.1,0.25){\tiny{$r+2$}};
\node()at(4.25,0.75){\tiny{$\bullet$}};
\node()at(3.85,0.75){\tiny{$r+1$}};
\node()at(4,1.25){\tiny{$\bullet$}};
\node()at(3.85,1.25){\tiny{$r$}};
\draw[-](4,1.25)--(4.25,0.75);
\draw[-](5.5,-1.75)--(5.25,-1.25);
\draw[-](5.25,-1.25)--(5,-0.75);
\draw[-](4.5,0.25)--(4.75,-0.25);
\draw[-](4.5,0.25)--(4.25,0.75);
\draw[dotted](5,-0.75)--(4.75,-0.25);
\node()at(5,-0.75){\tiny{$\bullet$}};
\node()at(4.75,-0.25){\tiny{$\bullet$}};
\draw[-](5.5,-1.75)--(6,-1.5);
\draw[-](5.25,-1.25)--(5.75,-1);
\draw[-](4.5,0.25)--(5,0.5);
\draw[-](4.75,1)--(4.25,0.75);
\draw[-](5,-0.75)--(5.5,-0.5);
\draw[-](4.75,-0.25)--(5.25,0);
\node()at(6,-1.5){\tiny{$\bullet$}};
\node()at(6.4,-1.5){\tiny{$r-1$}};
\node()at(5.75,-1){\tiny{$\bullet$}};
\node()at(6.15,-1){\tiny{$r-2$}};
\node()at(5,0.5){\tiny{$\bullet$}};
\node()at(5.15,0.5){\tiny{$2$}};
\node()at(4.75,1){\tiny{$\bullet$}};
\node()at(4.9,1){\tiny{$1$}};
\draw[-](6,-1.5)--(5.75,-1);
\draw[-](5.75,-1)--(5.5,-0.5);
\draw[-](5,0.5)--(5.25,0);
\draw[-](5,0.5)--(4.75,1);
\draw[dotted](5.25,0)--(5.5,-0.5);
\node()at(5.25,0){\tiny{$\bullet$}};
\node()at(5.5,-0.5){\tiny{$\bullet$}};\end{tikzpicture}$$
Let $I(X)$ be the incidence algebra of $X$. Combining with Proposition \ref{main2}, we obtain
\begin{equation}\label{_1A(r-1) and I(X)}
\textup{D}^b(I(X))\simeq\textup{D}^b(_1A(r-1))\simeq\textup{D}^b(N(2r-1,r+1)).
\end{equation}
Let $Y=\{1,2,\cdots,r-1\}$ be the closed subset of $X$.
Then $A_Y$ is isomorphic to the Nakayama algebra $N(2r-1,r)$. By Proposition \ref{Ladkani}, we have
\begin{equation}\label{N(2r-1,r) and I(X)}
\textup{D}^b(I(X))\simeq\textup{D}^b(A_Y)\simeq\textup{D}^b(N(2r-1,r)).
\end{equation}
Then the result follows from \eqref{_1A(r-1) and I(X)} and \eqref{N(2r-1,r) and I(X)}.
\end{proof}

\section{Tilting complexes for Nakayama algebras}

In this section, we give realizations of the one-branch extension algebras $A(u)^v$, $A(u)_v$, $^vA(u)$ and $_vA(u)$ by tilting complexes in the bounded derived category $\textup{D}^b(N(2u+v,u+v+1))$ respectively.

\subsection{Homomorphisms between projective complexes for $N(n,r)$}

Denote by $\mathcal{A}=\textup{mod}\mbox{-}N(n,r)$ for the Nakayama algebra $N(n,r)$. Denote by $\mathcal{C}, \mathcal{K}, \mathcal{D}$ the complex category, the homotopy category and the bounded derived category of $\mathcal{A}$, respectively.

The representation theory of the algebras $N(n,r)$ is well-understood. They are all representation finite and even simply connected.
The Auslander-Reiten quiver of $N(n,r)$ has the following shape:
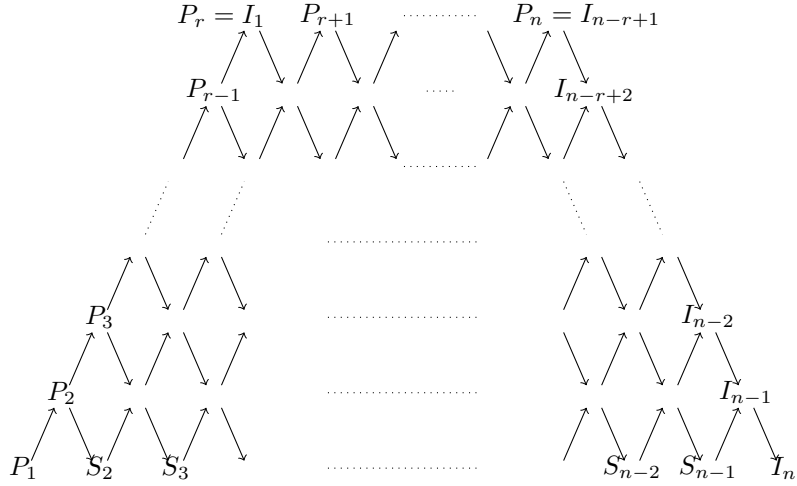
\begin{figure}[H]
\begin{tikzpicture}
\node()at(0,0){$P_1$};
\draw[->](0.1,0.1)--(0.4,0.8);
\node()at(0.5,1){$P_2$};
\draw[->](0.6,1.1)--(0.9,1.8);
\draw[->](0.6,0.8)--(0.9,0.1);
\draw[->](1.1,1.8)--(1.4,1.1);
\draw[->](1.6,0.8)--(1.9,0.1);
\draw[->](2.1,1.8)--(2.4,1.1);
\draw[->](1.6,2.8)--(1.9,2.1);
\draw[->](2.6,2.8)--(2.9,2.1);
\draw[->](2.6,0.8)--(2.9,0.1);
\draw[->](2.6,4.8)--(2.9,4.1);
\draw[->](3.1,5.8)--(3.4,5.1);
\draw[->](1.1,2.1)--(1.4,2.8);
\draw[dotted](1.6,3.1)--(1.9,3.8);
\draw[->](2.1,4.1)--(2.4,4.8);
\draw[->](2.6,5.1)--(2.9,5.8);
\node()at(2.5,5){$P_{r-1}$};
\node()at(2.6,6){$P_r=I_1$};
\node()at(1,0){$S_2$};
\node()at(2,0){$S_3$};
\node()at(4,6){$P_{r+1}$};
\node()at(7.4,6){$P_{n}=I_{n-r+1}$};
\node()at(7.5,5){$I_{n-r+2}$};
\node()at(9,2){$I_{n-2}$};
\node()at(9.5,1){$I_{n-1}$};
\node()at(10,0){$I_{n}$};
\node()at(9,0){$S_{n-1}$};
\node()at(8,0){$S_{n-2}$};
\draw[->](7.6,4.8)--(7.9,4.1);
\draw[dotted](8.1,3.8)--(8.4,3.1);
\draw[dotted](7.1,3.8)--(7.4,3.1);
\draw[->](8.6,2.8)--(8.9,2.1);
\draw[->](9.1,1.8)--(9.4,1.1);
\draw[->](9.6,0.8)--(9.9,0.1);
\draw[->](8.1,2.1)--(8.4,2.8);
\draw[->](7.1,2.1)--(7.4,2.8);
\draw[->](8.6,1.1)--(8.9,1.8);
\draw[->](9.1,0.1)--(9.4,0.8);
\draw[->](7.6,1.1)--(7.9,1.8);
\draw[->](8.1,0.1)--(8.4,0.8);
\draw[->](7.1,0.1)--(7.4,0.8);
\draw[->](7.6,2.8)--(7.9,2.1);
\draw[->](8.1,1.8)--(8.4,1.1);
\draw[->](8.6,0.8)--(8.9,0.1);
\draw[->](7.1,1.8)--(7.4,1.1);
\draw[->](7.6,0.8)--(7.9,0.1);
\draw[->](4.1,5.8)--(4.4,5.1);
\draw[->](6.1,5.8)--(6.4,5.1);
\draw[->](6.6,5.1)--(6.9,5.8);
\draw[->](6.6,4.8)--(6.9,4.1);
\draw[->](7.1,4.1)--(7.4,4.8);
\draw[->](4.6,4.8)--(4.9,4.1);
\draw[->](4.1,4.1)--(4.4,4.8);
\draw[->](3.6,4.8)--(3.9,4.1);
\draw[->](6.1,4.1)--(6.4,4.8);
\draw[->](7.1,5.8)--(7.4,5.1);
\draw[dotted](5,6)--(6,6);
\draw[dotted](4,0)--(6,0);
\draw[dotted](4,1)--(6,1);
\draw[dotted](4,2)--(6,2);
\draw[dotted](4,3)--(6,3);
\draw[dotted](5,4)--(6,4);
\draw[->](4.6,5.1)--(4.9,5.8);
\draw[dotted](5.3,5)--(5.7,5);
\node()at(1,2){$P_3$};
\draw[->](1.1,0.1)--(1.4,0.8);
\draw[->](2.1,0.1)--(2.4,0.8);
\draw[->](1.6,1.1)--(1.9,1.8);
\draw[->](2.6,1.1)--(2.9,1.8);
\draw[->](2.1,2.1)--(2.4,2.8);
\draw[dotted](2.6,3.1)--(2.9,3.8);
\draw[->](3.1,4.1)--(3.4,4.8);
\draw[->](3.6,5.1)--(3.9,5.8);
\end{tikzpicture}
\caption{Auslander-Reiten quiver of Nakayama algbra}\label{ARofNakayama}
\end{figure}
\noindent
where $P_a, I_a, S_a\; (1\leq a\leq n)$ denote the indecomposable projective, injective and simple $N(n,r)$ modules respectively. Denote by $S_a^{(j)}$ the $N(n,r)$ module of length $j$ with top $S_a$.

Observe that $$\textup{dim}_{\mathbf{k}}\,\textup{Hom}_{\mathcal{A}}(P_a,P_b)=\left\{\begin{array}{lcl}
1&&a\leq b< a+r,\\
0&&\textup{otherwise}.\end{array}\right.$$
We choose a $\mathbf{k}$-basis $\xi_{a}$ of the one-dimensional space $\textup{Hom}_{\mathcal{A}}(P_a,P_{a+1})$ for $1\leq a\leq n-1$. Denote by $\xi_{a,b}=\xi_{b-1}\cdots\xi_{a+1}\xi_{a}$ and $\xi_{a,a}=\textup{id}_{P_a}$. Then $\xi_{a,b}$ is a $\mathbf{k}$-basis of $\textup{Hom}_{\mathcal{A}}(P_a,P_{b})$ if $a\leq b< a+r$, and it is zero otherwise. Moreover, $\xi_{c,b}\xi_{a,c}=\xi_{a,b}$ for $a\leq c\leq b$.

We consider indecomposable complex $X^{\bullet}=(X_i,d^i)_{i\in\mathbb{Z}}$ in $\mathcal{D}$ with each $X_i$ an indecomposable projective $N(n,r)$ module or zero. Then $X^{\bullet}$ has the following form (up to isomorphism)
\begin{equation}\label{complexform}\xymatrix{\cdots \ar[r]&P_{a_{i-1}}\ar[rr]^{\xi_{a_{i-1},a_i}}&& P_{a_i}\ar[rr]^{\xi_{a_{i},a_{i+1}}}&& P_{a_{i+1}}\ar[r]&\cdots}\tag{$\ast$}\end{equation}
It is easy to see that $a_{i-1}<a_i<a_{i-1}+r\leq a_{i+1}$ since $X^{\bullet}$ is an indecomposable complex in $\mathcal{D}$. For simplicity, we denote by $X^{\geq i}$ the indecomposable complex of the form (\ref{complexform}) with $X_j=0$ for all $j<i$, and denote by $Y^{\leq i}$ the indecomposable complex of the form (\ref{complexform}) with $Y_j=0$ for all $j>i$.

\begin{lem}\label{lem1}
Let $X^{\geq 0}$ and $Y^{\leq 0}$ be two indecomposable complexes of the form (\ref{complexform}). If $\textup{Hom}_{\mathcal{A}}(X_0,Y_{-1})\neq 0$ or $\textup{Hom}_{\mathcal{A}}(X_1,Y_{0})\neq 0$, then $\textup{Hom}_{\mathcal{K}}(X^{\geq0},Y^{\leq0})=0$.
\end{lem}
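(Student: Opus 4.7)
The plan is to unfold what a chain map and a null-homotopy look like between $X^{\geq 0}$ and $Y^{\leq 0}$, and then build an explicit null-homotopy in each of the two hypotheses. Write $X_i = P_{a_i}$ with differential $\xi_{a_{i},a_{i+1}}$ (for $i\geq 0$), and $Y_i = P_{b_i}$ with differential $\xi_{b_{i-1},b_{i}}$ (for $i\leq 0$). Because $X_i = 0$ for $i<0$ and $Y_i = 0$ for $i>0$, any chain map $f\colon X^{\geq 0}\to Y^{\leq 0}$ has $f_i = 0$ for $i\neq 0$, and the chain-map equations at degrees $-1, 0, 1$ all hold automatically. Thus $\mathrm{Hom}_{\mathcal{C}}(X^{\geq 0}, Y^{\leq 0}) \cong \mathrm{Hom}_{\mathcal{A}}(X_0, Y_0)$, and I may assume $f_0 = \beta\,\xi_{a_0,b_0}$ with $a_0\leq b_0 < a_0+r$ (else $f=0$ is trivially null-homotopic).

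Next I write out the null-homotopy condition. A homotopy is a family $s_i\colon X_i \to Y_{i-1}$, and setting $s_i=0$ for $i\notin\{0,1\}$ makes every homotopy equation except the degree-$0$ one trivially hold. What remains is
\[
f_0 \;=\; \xi_{b_{-1},b_0}\,s_0 \;+\; s_1\,\xi_{a_0,a_1}.
\]
So it suffices, in each of the two cases, to exhibit $s_0\colon P_{a_0}\to P_{b_{-1}}$ and $s_1\colon P_{a_1}\to P_{b_0}$ realizing this equality.

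\textbf{Case 1}: $\mathrm{Hom}_{\mathcal{A}}(X_0, Y_{-1})\neq 0$, i.e.\ $a_0\leq b_{-1}<a_0+r$. Set $s_1=0$ and $s_0 = \beta\,\xi_{a_0,b_{-1}}$. The complex condition on $Y^{\leq 0}$ gives $b_{-1}<b_0$, and $f_0\neq 0$ forces $b_0<a_0+r$; together with the hypothesis we obtain the chain $a_0\leq b_{-1}\leq b_0 < a_0+r$, so the composition rule yields $\xi_{b_{-1},b_0}\,\xi_{a_0,b_{-1}} = \xi_{a_0,b_0}$. Hence $\xi_{b_{-1},b_0}\,s_0 = f_0$.

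\textbf{Case 2}: $\mathrm{Hom}_{\mathcal{A}}(X_1, Y_0)\neq 0$, i.e.\ $a_1\leq b_0 < a_1+r$. Set $s_0=0$ and $s_1 = \beta\,\xi_{a_1,b_0}$. The complex condition on $X^{\geq 0}$ gives $a_0<a_1$, and $f_0\neq 0$ forces $b_0<a_0+r$, so $a_0<a_1\leq b_0<a_0+r$, and the composition rule gives $\xi_{a_1,b_0}\,\xi_{a_0,a_1} = \xi_{a_0,b_0}$. Hence $s_1\,\xi_{a_0,a_1} = f_0$.

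\textbf{Main obstacle.} There is no serious obstacle---the argument is entirely a matter of checking that the indices fall in the correct $r$-intervals so that the asserted composition identities $\xi_{\ast,\ast}\xi_{\ast,\ast}=\xi_{\ast,\ast}$ actually hold. The only point requiring care is to combine the structural inequalities coming from $X^{\geq 0}$ and $Y^{\leq 0}$ being complexes (yielding $a_0<a_1$ and $b_{-1}<b_0$) with the hypothesis ($a_0\leq b_{-1}$ in Case~1 or $a_1\leq b_0$ in Case~2) and with $b_0<a_0+r$ from $f_0\neq 0$, so that the relevant $\xi$'s are simultaneously non-zero and compose correctly.
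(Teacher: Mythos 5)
Your proof is correct and follows essentially the same route as the paper: reduce a chain map to its single component $f_0=c\,\xi_{a_0,b_0}$ and exhibit an explicit null-homotopy, taking $s_0=c\,\xi_{a_0,b_{-1}}$, $s_1=0$ in the first case and $s_0=0$, $s_1=c\,\xi_{a_1,b_0}$ in the second, using the composition rule $\xi_{c,b}\xi_{a,c}=\xi_{a,b}$. Your homotopy maps are exactly the paper's $\alpha$ and $\beta$, so there is nothing to add.
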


\begin{proof}
For any non-zero map $f=(f_i)_{i\in\mathbb{Z}}\in\textup{Hom}_{\mathcal{C}}(X^{\geq0}, Y^{\leq0})$, it is clear that $f_i=0$ for each $i\neq 0$, and $f_0=c\xi_{a_0,b_0}$ for some $c\neq0$.
Consider the following diagram:
$$\xymatrix{X^{\geq0}\ar[d]_{f}:\;\;&\cdots\ar[r]&0\ar[r]\ar[d]&P_{a_{0}}\ar[r]^{\xi_{a_0,a_1}}\ar[d]_{f_0}
\ar@{-->}[dl]_{\alpha}&P_{a_{1}}\ar@{-->}[dl]_{\beta}\ar[r]\ar[d]&\cdots\\
Y^{\leq0}\;\;&\cdots\ar[r]&P_{b_{-1}}\ar[r]_{\xi_{b_{-1},b_0}}&P_{b_{0}}\ar[r]&0\ar[r]&\cdots.}$$
If $\textup{Hom}_{\mathcal{A}}(X_0,Y_{-1})\neq 0$, it follows $a_0\leq b_{-1}<a_0+r$. We take $\alpha=c\xi_{a_0,b_{-1}}$ and $\beta=0$. Since $\xi_{a_0,b_0}=\xi_{b_{-1},b_0}\xi_{a_0,b_{-1}}$, we obtain that $f$ is homotopic to zero. Similarly, if $\textup{Hom}_{\mathcal{A}}(X_1,Y_{0})\neq 0$, then by taking $\alpha=0$ and $\beta=c\xi_{a_1,b_0}$, we also obtain that $f$ is null-homotopic. Hence $\textup{Hom}_{\mathcal{K}}(X^{\geq0},Y^{\leq0})=0$.
\end{proof}

\begin{lem}\label{lem2}
Let $X^{\geq -1}$ and $Y^{\leq 0}$ be two indecomposable complexes of the form (\ref{complexform}). Assume that $\textup{Hom}_{\mathcal{A}}(X_0,Y_{-1})\neq 0$. If one of the following conditions holds:
$$\begin{array}{llllll}
(i)\,\textup{Hom}_{\mathcal{A}}(X_{-1},Y_{0})\neq 0; &(ii)\,\textup{Hom}_{\mathcal{A}}(X_{-1},Y_{-2})\neq 0;&(iii)\;\textup{Hom}_{\mathcal{A}}(X_{1},Y_{0})\neq 0,
\end{array}$$
then $\textup{Hom}_{\mathcal{K}}(X^{\geq -1},Y^{\leq0})=0$.
\end{lem}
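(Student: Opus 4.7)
The plan is to adapt the single-component homotopy argument of Lemma \ref{lem1} to the case where $X$ has an extra term $X_{-1}$. Given an arbitrary chain map $f=(f_i)_{i\in\mathbb{Z}}\colon X^{\geq -1}\to Y^{\leq 0}$, only $f_{-1}\colon P_{a_{-1}}\to P_{b_{-1}}$ and $f_0\colon P_{a_0}\to P_{b_0}$ can be nonzero; write $f_{-1}=c_{-1}\,\xi_{a_{-1},b_{-1}}$ and $f_0=c_0\,\xi_{a_0,b_0}$ for scalars $c_{-1},c_0\in\mathbf{k}$ (interpreting $c_i=0$ whenever the corresponding $\xi$ vanishes). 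The only nontrivial chain-map condition, at degree $-1$, reads $f_0\,\xi_{a_{-1},a_0}=\xi_{b_{-1},b_0}\,f_{-1}$, and using $\xi_{c,b}\xi_{a,c}=\xi_{a,b}$ it collapses to the single identity $(c_0-c_{-1})\,\xi_{a_{-1},b_0}=0$ in $\textup{Hom}_{\mathcal{A}}(X_{-1},Y_0)$.

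The remainder of the argument splits according to whether $\xi_{a_{-1},b_0}$ vanishes. In case (i), the hypothesis $\textup{Hom}_{\mathcal{A}}(X_{-1},Y_0)\neq 0$ gives $\xi_{a_{-1},b_0}\neq 0$, forcing $c_0=c_{-1}$; then the choice $s_{-1}=0$, $s_0=c_0\,\xi_{a_0,b_{-1}}$, $s_1=0$ is a null-homotopy by the same computation as in Lemma \ref{lem1}. In cases (ii) and (iii), the indecomposability conditions for (\ref{complexform}) combine with the respective hypothesis to give $b_0\geq a_{-1}+r$, so $\xi_{a_{-1},b_0}=0$ and the chain-map equation imposes no relation between $c_{-1}$ and $c_0$. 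More precisely, case (ii) combines $a_{-1}\leq b_{-2}$ (from the hypothesis) with $b_{-2}+r\leq b_0$ (indecomposability of $Y$); case (iii) combines $a_1\leq b_0$ (from the hypothesis) with $a_{-1}+r\leq a_1$ (indecomposability of $X$).

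To absorb the possible discrepancy $c_0-c_{-1}$ we use the extra homotopy slot made available by the respective hypothesis. For case (ii) put $s_{-1}=(c_{-1}-c_0)\,\xi_{a_{-1},b_{-2}}$, $s_0=c_0\,\xi_{a_0,b_{-1}}$, $s_1=0$; the identity $\xi_{b_{-2},b_{-1}}\xi_{a_{-1},b_{-2}}=\xi_{a_{-1},b_{-1}}$ together with the computation of Lemma \ref{lem1} verifies both $f_{-1}=\xi_{b_{-2},b_{-1}}s_{-1}+s_0\xi_{a_{-1},a_0}$ and $f_0=\xi_{b_{-1},b_0}s_0+s_1\xi_{a_0,a_1}$. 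For case (iii) put $s_{-1}=0$, $s_0=c_{-1}\,\xi_{a_0,b_{-1}}$, $s_1=(c_0-c_{-1})\,\xi_{a_1,b_0}$, and the analogous check using $\xi_{a_1,b_0}\xi_{a_0,a_1}=\xi_{a_0,b_0}$ closes the argument.

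The main subtlety is that cases (ii) and (iii), unlike case (i), do not admit a homotopy with only the middle component $s_0$; the chain-map condition no longer forces $c_0=c_{-1}$, and one must supply a compensating morphism in an adjacent degree. It is exactly the given $\textup{Hom}$-nonvanishing in each case that provides the required nonzero basis element $\xi_{a_{-1},b_{-2}}$ or $\xi_{a_1,b_0}$, so the three cases are parallel but cannot be reduced directly to Lemma \ref{lem1}.
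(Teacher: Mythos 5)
Your proof is correct and follows essentially the same route as the paper: the same reduction to the single chain-map condition in degree $-1$, the same forcing of $c_0=c_{-1}$ in case (i), and homotopies $s_{-1},s_0,s_1$ that coincide with the paper's $\alpha,\beta,\gamma$ in all three cases. The extra remark that in cases (ii) and (iii) the condition $(c_0-c_{-1})\xi_{a_{-1},b_0}=0$ is vacuous is a harmless (and accurate) elaboration the paper leaves implicit.
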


\begin{proof}
Observe that if $X_{-1}=0$ or $Y_0=0$, then the result holds by Lemma \ref{lem1}. Hence we assume $X_{-1}\neq0$ and $Y_0\neq0$. For any non-zero map $f=(f_i)_{i\in\mathbb{Z}}\in\textup{Hom}_{\mathcal{C}}(X^{\geq -1},Y^{\leq0})$, it is clear that $f_i=0$ for $i\neq 0,-1$, $f_{-1}=c_{-1}\xi_{a_{-1}, b_{-1}}$ and $f_0=c_0\xi_{a_0,b_0}$ for some scalars $c_{-1}, c_0$.
Consider the following diagram:
$$\xymatrix{X^{\geq -1}\ar[d]_{f}:\;\;&\cdots\ar[r]&0\ar[r]\ar[d]&P_{a_{-1}}\ar[r]^{\xi_{a_{-1},a_0}}\ar[d]_{f_{-1}}\ar@{-->}[dl]_{\alpha}&P_{a_{0}}\ar[r]^{\xi_{a_0,a_1}}\ar[d]_{f_0}\ar@{-->}[dl]_{\beta}&P_{a_{1}}\ar@{-->}[dl]_{\gamma}\ar[r]\ar[d]&\cdots\\
Y^{\leq0}:\;\;&\cdots\ar[r]&P_{b_{-2}}\ar[r]_{\xi_{b_{-2},b_{-1}}}&P_{b_{-1}}\ar[r]_{\xi_{b_{-1},b_0}}&P_{b_{0}}\ar[r]&0\ar[r]&\cdots}$$
For case (i), we have $a_{-1}\leq b_0< a_{-1}+r$ and then $\xi_{a_{-1},b_0}\neq 0$. Since $$c_{-1}\xi_{a_{-1},b_0}=\xi_{b_{-1},b_0}f_{-1}=f_0\xi_{a_{-1},a_0}=c_{0}\xi_{a_{-1},b_0},$$
we have $c_0=c_{-1}$. Observe that $\textup{Hom}_{\mathcal{A}}(X_0,Y_{-1})\neq 0$, then we have $a_{0}\leq b_{-1}$. Hence $f$ is null-homotopic by taking $\beta=c_0\xi_{a_0,b_{-1}}$ and $\alpha=\gamma=0$. For case (ii), by taking $\alpha=(c_{-1}-c_0)\xi_{a_{-1},b_{-2}}$, $\beta=c_0\xi_{a_0,b_{-1}}$ and $\gamma=0$, we obtain that $f$ is null-homotopic. For case (iii), $f$ is null-homotopic by taking $\alpha=0$, $\beta=c_{-1}\xi_{a_0,b_{-1}}$ and $\gamma=(c_0-c_{-1})\xi_{a_1,b_{0}}$.

In each case, $\textup{Hom}_{\mathcal{K}}(X^{\geq -1},Y^{\leq0})=0$. We are done.
\end{proof}

In the following we plan to construct tilting complexes (concentrating at positions $0,\pm1$) in the bounded derived category $\mathcal{D}$. For simplicity, we denote by
$(\xymatrix{X_{{-1}}\ar[r]^{\alpha_{-1}}&X_{{0}}\ar[r]^{\alpha_{0}}&X_{{1}}})$
the following projective complex $$\xymatrix{\cdots\ar[r]&0\ar[r]&X_{{-1}}\ar[r]^{\alpha_{-1}}&X_{{0}}
\ar[r]^{\alpha_{0}}&X_{{1}}\ar[r]&0\ar[r]&\cdots}.$$

\begin{lem}\label{endt}
Let $X^{\bullet}=(\xymatrix{X_{{-1}}\ar[r]^{\alpha_{-1}}&P_{a_{0}}\ar[r]^{\alpha_{0}}&X_{{1}}})$ and $Y^{\bullet}=(\xymatrix{Y_{{-1}}\ar[r]^{\beta_{-1}}&P_{a_{0}}\ar[r]^{\beta_{0}}&Y_{{1}}})$ be two indecomposable complexes in $\mathcal{D}$ of the form (\ref{complexform}).
Then $\textup{Hom}_{\mathcal{K}}(X^{\bullet},Y^{\bullet})\neq 0$ if and only if the following two conditions hold:
\begin{itemize}
\item[(1)] $X_{-1}=0$ or $\textup{Hom}_{\mathcal{A}}(X_{-1},Y_{-1})\neq 0$,
\item[(2)] $Y_{1}=0$ or $\textup{Hom}_{\mathcal{A}}(X_{1},Y_{1})\neq 0$.
\end{itemize}
Moreover, in this case, $\textup{dim}_{{\bf k}}\textup{Hom}_{\mathcal{K}}(X^{\bullet},Y^{\bullet})=1$.
\end{lem}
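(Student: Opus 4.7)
The plan is to analyze $\textup{Hom}_{\mathcal{K}}(X^{\bullet},Y^{\bullet})$ by passing through the chain-map category $\mathcal{C}$, exploiting the rigidity $\textup{End}_{\mathcal{A}}(P_{a_0})\cong\mathbf{k}$: any chain map $f=(f_{-1},f_0,f_1)$ must satisfy $f_0=c\cdot\textup{id}_{P_{a_0}}$ for a unique scalar $c\in\mathbf{k}$. The basis composition rule $\xi_{c,b}\xi_{a,c}=\xi_{a,b}$, together with the strict inequalities $a_{-1}<a_0<a_1$ and $b_{-1}<a_0<b_1$ provided by indecomposability of the complexes (form $(\ast)$), will drive every step.

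For the ``only if'' direction I would argue contrapositively. Suppose (1) fails, so $X_{-1}\neq 0$ but $\textup{Hom}_{\mathcal{A}}(X_{-1},Y_{-1})=0$; then $f_{-1}=0$, and commutativity of the left square forces $c\cdot\alpha_{-1}=\beta_{-1}f_{-1}=0$. Since $\alpha_{-1}\neq 0$, this yields $c=0$, i.e.\ $f_0=0$. The right-hand square then gives $f_1\alpha_0=0$, and writing $f_1=d\,\xi_{a_1,b_1}$ the composition rule forces $d=0$. Thus $f$ is already zero in $\mathcal{C}$. The symmetric argument handles the failure of (2).

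For the ``if'' direction, I would define $f$ by $f_0=\textup{id}_{P_{a_0}}$, $f_{-1}=\xi_{a_{-1},b_{-1}}$ (or $0$ if $X_{-1}=0$), and $f_1=\xi_{a_1,b_1}$ (or $0$ if $Y_1=0$); commutativity of the two squares is immediate from $\xi_{b_{-1},a_0}\xi_{a_{-1},b_{-1}}=\xi_{a_{-1},a_0}$ and $\xi_{a_1,b_1}\xi_{a_0,a_1}=\xi_{a_0,b_1}$. The main obstacle, and the only step with real content, is showing that $f$ is not null-homotopic. A candidate homotopy consists of maps $s_0:P_{a_0}\to Y_{-1}$ and $s_1:X_1\to P_{a_0}$ with $\beta_{-1}s_0+s_1\alpha_0=\textup{id}_{P_{a_0}}$; the strict indecomposability inequalities give $b_{-1}<a_0$ (when $Y_{-1}\neq 0$) and $a_0<a_1$ (when $X_1\neq 0$), so $\textup{Hom}_{\mathcal{A}}(P_{a_0},P_{b_{-1}})=0$ and $\textup{Hom}_{\mathcal{A}}(P_{a_1},P_{a_0})=0$, forcing $s_0=s_1=0$ and contradicting the identity.

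For the dimension count, I would observe that any chain map with $f_0=0$ must vanish identically: under condition (1), $f_{-1}=c'\xi_{a_{-1},b_{-1}}$ and $\beta_{-1}f_{-1}=c'\xi_{a_{-1},a_0}=0$ force $c'=0$, and the symmetric computation kills $f_1$. Hence $f\mapsto c$ is an injection $\textup{Hom}_{\mathcal{C}}(X^{\bullet},Y^{\bullet})\hookrightarrow\mathbf{k}$, yielding $\dim_{\mathbf{k}}\textup{Hom}_{\mathcal{K}}(X^{\bullet},Y^{\bullet})\leq 1$; combined with the explicit non-null-homotopic map from the previous paragraph, this forces equality.
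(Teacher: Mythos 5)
Your proposal is correct and takes essentially the same route as the paper: the same use of $\End_{\mathcal A}(P_{a_0})\cong{\bf k}$ and the basis maps $\xi_{a,b}$ with their composition rule to pin down every component of a chain map, the same explicit representative $(\xi_{a_{-1},b_{-1}},\mathrm{id}_{P_{a_0}},\xi_{a_1,b_1})$ whose null-homotopy is excluded by $\Hom_{\mathcal A}(P_{a_1},P_{a_0})=0=\Hom_{\mathcal A}(P_{a_0},P_{b_{-1}})$, and the same observation that $f_{\pm1}$ is determined by $f_0$ for the dimension count. The only cosmetic difference is that you argue the necessity direction contrapositively (conditions fail $\Rightarrow$ every chain map vanishes), while the paper argues directly that a nonzero chain map must have $f_0\neq 0$.
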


$$\xymatrix{X^{\bullet}\ar[d]_{f}:\;\;&(X_{{-1}}\ar[r]^{\alpha_{-1}}\ar[d]_{f_{-1}}&P_{a_{0}}
\ar[r]^{\alpha_{0}}\ar[d]_{f_{0}}&X_{{1}})\ar[d]_{f_{1}}\\
Y^{\bullet}:\;\;&(Y_{{-1}}\ar[r]_{\beta_{-1}}&P_{a_{0}}\ar[r]_{\beta_{0}}&Y_{{1}})}$$

\begin{proof}
``$\Longrightarrow$'':\;Let $f=(f_{-1},f_0,f_1)\in\textup{Hom}_{\mathcal{K}}(X^{\bullet},Y^{\bullet})$ be a non-zero  homomorphism from $X^{\bullet}$ to $Y^{\bullet}$. We claim that $f_0\neq0$. If not, we have $f_{-1}\neq 0$ or $f_1\neq0$. Without loss of generality, we assume $f_1\neq0$, then we can write $X_1=P_{a_{1}}$, $Y_1=P_{b_{1}}$ and $f_1=c_1\xi_{a_1,b_1}\neq0$. Since $X^{\bullet}, Y^{\bullet}$ are indecomposable in $\mathcal{D}$, we have $\alpha_0=l\xi_{a_0,a_1}\neq0$ and $\beta_0\neq0$, which implies that $a_0< b_1<a_0+r$. Hence $f_1\alpha_0=c_1l\xi_{a_0,b_1}\neq0$, contradicting with $f_1\alpha_0=\beta_0f_0=0$. This proves the claim.

Therefore, we can assume that $f_0=c\textup{id}_{P_{a_0}}$ for some $c\neq0$. If $X_{-1}\neq0$, then $\alpha_{-1}\neq0$ since $X^{\bullet}$ is indecomposable. Hence
$$\beta_{-1}f_{-1}=f_0\alpha_{-1}=c\alpha_{-1}\neq0.$$
It follows that $f_{-1}\neq0$. Hence $\textup{Hom}_{\mathcal{A}}(X_{-1},Y_{-1})\neq 0$. Similarly, if $Y_1\neq0$, then we have $f_1\neq0$. Hence $\textup{Hom}_{\mathcal{A}}(X_{1},Y_{1})\neq 0$.

``$\Longleftarrow$'': If $\textup{Hom}_{\mathcal{A}}(X_{-1},Y_{-1})\neq 0$ and $\textup{Hom}_{\mathcal{A}}(X_{1},Y_{1})\neq 0$, then we can assume that $X_{\pm1}=P_{a_{\pm1}}$ and $Y_{\pm1}=P_{b_{\pm1}}$ with $a_{-1}\leq b_{-1}< a_0< a_1\leq b_1$. Up to isomorphism, we can assume that $\alpha_i=\xi_{a_{i},a_{i+1}}$ and $\beta_i=\xi_{b_i,b_{i+1}}$ for $i=-1,0$, where $b_0=a_0$. Since $\textup{Hom}_{\mathcal{A}}(P_{a_1},P_{a_0})=0=\textup{Hom}_{\mathcal{A}}(P_{a_0},P_{b_{-1}})$, then $(f_{-1},f_0,f_1)=(\xi_{a_{-1},b_{-1}},\textup{id}_{P_{a_0}},\xi_{a_{1},b_{1}})$ is not null-homotopic. Hence $\textup{Hom}_{\mathcal{K}}(X^{\bullet},Y^{\bullet})\neq 0$. If $X_{-1}=0$ or $Y_1=0$, we can replace $f_{-1}$ or $f_1$ by zero in the above triple $(f_{-1},f_0,f_1)$ respectively, yielding a non-zero homomorphism in $\textup{Hom}_{\mathcal{K}}(X^{\bullet},Y^{\bullet})$.

Moreover, if $\textup{Hom}_{\mathcal{K}}(X^{\bullet},Y^{\bullet})\neq0$, then by the above analysis we know that $f_{_{\pm1}}$ is determined by $f_0$, hence
$\textup{dim}_{{\bf k}}\textup{Hom}_{\mathcal{K}}(X^{\bullet},Y^{\bullet})=1$.
\end{proof}

\subsection{Tilting realizations of $A(u)^v$ and $_vA(u)$ in $\textup{D}^{b}(N(2u+v,u+v+1))$}

In this subsection, we will construct tilting objects in $\mathcal{D}$ (consisting of $N(2u+v,u+v+1)$-modules) to realize $A(u)^v$ and $_vA(u)$.

Recall that $P_i, I_i, S_i\; (1\leq i\leq 2u+v)$ denote the indecomposable projective, injective and simple $N(2u+v,u+v+1)$ modules respectively, and $S_i^{(j)}$ denotes the $N(2u+v,u+v+1)$ module of length $j$ with top $S_i$.
\begin{prop}\label{tiltingobject}
We have that $$T(u)^{v}=(\bigoplus_{i={u+v}}^{2u+v}P_i)\oplus(\bigoplus_{j=1}^{u+v-1}S_{u+v}^{(j)})$$ is a tilting object in $\textup{D}^{b}(N(2u+v,u+v+1))$, whose endomorphism algebra is isomorphic to the one-branch extension algebra $A(u)^{v}$.
\end{prop}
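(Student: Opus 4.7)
The plan is to realise each indecomposable summand of $T(u)^v$ as a $2$-term projective complex of the form $(\ast)$ in $\mathcal{D}=\textup{D}^b(N(2u+v,u+v+1))$, and then apply the Hom-computation lemmas of the preceding subsection. The projectives $P_b$ for $b\in\{u+v,\ldots,2u+v\}$ are stalks in degree $0$. For each $j=1,\ldots,u+v-1$, the kernel of the canonical surjection $P_{u+v}\to S_{u+v}^{(j)}$ is $\textup{rad}^{j}P_{u+v}$, a uniserial submodule of length $u+v-j<r=u+v+1$ isomorphic to $P_{u+v-j}$, so $S_{u+v}^{(j)}$ is quasi-isomorphic to the complex $X_j:=\big(P_{u+v-j}\xrightarrow{\xi_{u+v-j,u+v}}P_{u+v}\big)$ concentrated in degrees $-1,0$.

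To verify the tilting property I first check $\Hom_{\mathcal{D}}(T(u)^v,T(u)^v[k])=0$ for $k\neq 0$. Since every summand is supported in degrees inside $\{-1,0\}$, only $k=\pm 1$ can contribute. For $k=+1$, every $\Ext^{1}$ involving a projective summand is automatically zero, while the groups $\Ext^{1}(S_{u+v}^{(j_1)},S_{u+v}^{(j_2)})$ and $\Ext^{1}(S_{u+v}^{(j)},P_b)$ are cokernels of the composition maps induced by $\xi_{u+v-j,u+v}$; the identity $\xi_{a,c}\xi_{b,a}=\xi_{b,c}$ shows these maps are surjective whenever the target is nonzero, so all such $\Ext^{1}$'s vanish. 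For $k=-1$ between two $X_j$'s, Lemma~\ref{endt} applies (with $X_1=Y_1=0$), the crucial point being that a homotopy component $h_0\in\End(P_{u+v})$ absorbs the one-dimensional space $\Hom(P_{u+v-j_1},P_{u+v})$; for mixed pairs involving a stalk the supports do not match, so there is nothing to check. Generation is then immediate: the distinguished triangles $P_{u+v-j}\to P_{u+v}\to X_j\to P_{u+v-j}[1]$ recover $P_1,\ldots,P_{u+v-1}$ from the summands of $T(u)^v$, while $P_{u+v},\ldots,P_{2u+v}$ are summands, so every indecomposable projective lies in the triangulated subcategory generated by $T(u)^v$.

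Finally, I compute $\End_{\mathcal{D}}(T(u)^v)$. Using Lemma~\ref{endt} for the $X$-$X$ pairs and a direct calculation in the mixed cases, I find $\dim\Hom(P_b,P_{b'})=1$ iff $b\le b'$, $\dim\Hom(X_{j_1},X_{j_2})=1$ iff $j_1\ge j_2$, $\dim\Hom(P_b,X_j)=1$ iff $b=u+v$, and $\dim\Hom(X_j,P_b)=1$ iff $b+j\ge 2u+2v+1$. Reading off the irreducible arrows, the chain $P_{u+v}\to X_{u+v-1}\to\cdots\to X_1$ realises the top row of $Q(u)^v$ together with its branch of $v$ vertices, the chain $P_{u+v}\to P_{u+v+1}\to\cdots\to P_{2u+v}$ is the bottom row, and the cross arrows $X_{v+i}\to P_{2u+v+1-i}$ for $i=1,\ldots,u-1$ fill in the rectangle's vertical arrows. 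A direct composition check shows that both ways around the $k$th rectangle square yield the same element $\xi_{u+v,u+v+2+k}$, providing exactly the commutativity relations defining $A(u)^v$, while the branch portion $X_v\to\cdots\to X_1$ incurs no further relations. The main obstacle is the bookkeeping in this last step: one must correctly separate the irreducible arrows from compositions of others, match orientations against $Q(u)^v$, and certify that no extraneous relation is imposed beyond the commutativity of the rectangle's squares.
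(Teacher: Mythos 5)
Your proposal is correct and follows essentially the same route as the paper: you replace each $S_{u+v}^{(j)}$ by its two-term projective resolution $P_{u+v-j}\xrightarrow{\xi_{u+v-j,u+v}}P_{u+v}$, verify the required Hom-vanishing by explicit computations (your cokernel/surjectivity argument for $\Ext^1$ is the module-theoretic form of the paper's use of Lemma \ref{lem1}), obtain generation from the triangles $P_{u+v-j}\to P_{u+v}\to S_{u+v}^{(j)}\to P_{u+v-j}[1]$, and identify $\End_{\mathcal{D}}(T(u)^v)$ with $A(u)^v$ by computing the one-dimensional Hom spaces and the commuting squares, exactly as the paper does (and in somewhat more detail). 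The only blemish is your $k=-1$ step: the homotopy you describe (a component in $\End(P_{u+v})$ absorbing $\Hom(P_{u+v-j_1},P_{u+v})$) is in fact the $k=+1$ computation in homotopy language, whereas $\Hom_{\mathcal{D}}(T(u)^v,T(u)^v[-1])$ vanishes for the trivial reason that all summands are quasi-isomorphic to modules, so negative-degree Homs are zero and nothing is lost.
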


\begin{proof}
Observe that the projective resolution of $S_{u+v}^{(j)}$ in $\mathcal{A}$ has the following form:
\begin{equation}\label{projective resolution}\xymatrix{0\ar[r]&P_{u+v-j}\ar[rr]^{\xi_{u+v-j,u+v}}&&
P_{u+v}\ar[r]&S_{u+v}^{(j)}\ar[r]&0.}\end{equation}
We denote by $E^{(j)}$ the complex $(\xymatrix{P_{u+v-j}\ar[rr]^{\xi_{u+v-j,u+v}}&&P_{u+v}\ar[r]&0)}$, where $P_{u+v}$ is on $0$-position. Then $S_{u+v}^{(j)}\cong E^{(j)}$ in $\mathcal{D}$.

Let $u+v\leq i\leq 2u+v$ and $1\leq j,j'\leq u+v-1$. Then we have
\begin{align*}\textup{Hom}_{\mathcal{K}}(P_i,T(u)^{v}[k])\subseteq\textup{Hom}_{\mathcal{C}}(P_i,T(u)^{v}[k])=0; \quad\forall k\neq 0;\\
\textup{Hom}_{\mathcal{K}}(E^{(j)},P_i[k])\subseteq\textup{Hom}_{\mathcal{C}}(E^{(j)},P_i[k])=0;
\quad\forall k\neq 0,1;\\
\textup{Hom}_{\mathcal{K}}(E^{(j)},E^{(j')}[k])
\subseteq\textup{Hom}_{\mathcal{C}}(E^{(j)},E^{(j')}[k])=0;\quad\forall k\neq 0,1;
\end{align*}
Moreover, by Lemma \ref{lem1} we have
\begin{align*}\textup{Hom}_{\mathcal{K}}(E^{(j)},P_i[1])=0=
\textup{Hom}_{\mathcal{K}}(E^{(j)},E^{(j')}[1]).\end{align*}
Therefore, for any $k\neq 0$, by Lemma \ref{fibrant} we have
\begin{align*}\textup{Hom}_{\mathcal{D}}(T(u)^{v},T(u)^{v}[k])
=\textup{Hom}_{\mathcal{K}}(T(u)^{v},T(u)^{v}[k])=0.\end{align*}

Moreover, by \eqref{projective resolution} we know that each indecomposable projective module $P_i\;(1\leq i\leq u+v)$ can be generated by $T(u)^{v}$. Hence $T(u)^{v}$ generates the bounded derived category $\textup{D}^{b}(N(2u+v,u+v+1))$. Therefore, $T(u)^{v}$ is a tilting object in  $\textup{D}^{b}(N(2u+v,u+v+1))$.

It is easy to see that the tilting object has the following shape:
$$\tiny{\xymatrix{P_{u+v}\ar[r]\ar[d]&S_{u+v}^{(u+v-1)}\ar[r]\ar[d]&\cdots\ar[r]&S_{u+v}^{(v+2)}\ar[r]\ar[d]&S_{u+v}^{(v+1)}\ar[d]\ar[r]&\cdots\ar[r]&S_{r-1}^{(1)}\\
P_{u+v+1}\ar[r]&P_{u+v+2}\ar[r]&\cdots\ar[r]&P_{2u+v-1}\ar[r]&P_{2u+v}}}$$
Hence the endomorphism algebra $\End_{\mathcal{A}}(T(u)^v)$ is isomorphic to the one-branch extension algebra $A(u)^{v}$. We are done.
\end{proof}

Dually, 
let $$_vT(u)=(\bigoplus_{i={1}}^{u+1}I_i)\oplus(\bigoplus_{j=1}^{u+v-1}S_{u+j}^{(j)}),$$ which has the following shape: $$\tiny{\xymatrix{&&I_{1}\ar[r]\ar[d]&I_{2}\ar[r]\ar[d]&\cdots\ar[r]&I_{u-1}\ar[r]\ar[d]&I_{u}\ar[d]\\
S_{u+1}^{(1)}\ar[r]&\cdots\ar[r]&S_{u+v+1}^{(v+1)}\ar[r]&S_{u+v+2}^{(v+2)}\ar[r]&\cdots\ar[r]&S_{2u+v-1}^{(u+v-1)}\ar[r]&I_{u+1}}}$$
Then we have the following result.

\begin{prop}\label{tiltingobject2}
$_vT(u)$ is a tilting object in $\textup{D}^{b}(N(2u+v,u+v+1))$, whose endomorphism algebra is isomorphic to the one-branch extension algebra $_{v}A(u)$.
\end{prop}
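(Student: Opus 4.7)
The plan is to mirror the proof of Proposition~\ref{tiltingobject} with injective coresolutions in place of projective resolutions. The self-duality $N(n,r)^{op}\cong N(n,r)$ induced by the relabeling $i\mapsto n+1-i$ ensures that the technical Lemmas~\ref{lem1} and \ref{endt} admit automatic duals for left-bounded complexes of injectives (applied via the dual part of Lemma~\ref{fibrant}).

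First I would represent each summand $S_{u+j}^{(j)}$ for $1\le j\le u+v-1$ as a two-term complex of injectives. Since $S_{u+j}^{(j)}$ has socle $S_{u+1}$ and length $j$, its injective envelope is $I_{u+1}$ (uniserial, length $u+v$), and the cokernel is the uniserial module with socle $S_{u+j+1}$ and length $u+v-j$, which is precisely $I_{u+j+1}$. This yields an injective coresolution
\begin{equation*}
0\to S_{u+j}^{(j)}\to I_{u+1}\xrightarrow{\pi_j} I_{u+j+1}\to 0.
\end{equation*}
Letting $F^{(j)}$ denote the complex $(I_{u+1}\xrightarrow{\pi_j}I_{u+j+1})$ with $I_{u+1}$ in position $0$, one has $S_{u+j}^{(j)}\cong F^{(j)}$ in $\mathcal{D}$, so ${_vT(u)}$ is represented as a direct sum of left-bounded complexes of injectives; the dual of Lemma~\ref{fibrant} then identifies $\textup{Hom}_{\mathcal{K}}$ with $\textup{Hom}_{\mathcal{D}}$ on this replacement.

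Next I would verify that $\textup{Hom}_{\mathcal{D}}({_vT(u)},{_vT(u)}[k])=0$ for all $k\neq 0$. A degree count at the chain level kills all $k\notin\{-1,0,1\}$, and the remaining shifts $k=\pm 1$ are eliminated by short chain-level arguments of the same type as in the proof of Lemma~\ref{lem1}: for $k=-1$ the chain condition forces compositions with $\pi_j$ to vanish (and hence the map itself to vanish, by surjectivity of $\pi_j$, or by a socle/top mismatch in the $F^{(j)}$-$F^{(j')}$ case), and for $k=1$ each cochain map can be lifted through $\pi_j$ because every $I_i$ with $i\le u$ is projective-injective, while $\textup{Hom}(I_{u+1},I_{u+j+1})$ is itself spanned by $\pi_j$. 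For generation, $I_1,\ldots,I_{u+1}$ appear as direct summands, and the distinguished triangles $F^{(j)}\to I_{u+1}\to I_{u+j+1}\to F^{(j)}[1]$ for $1\le j\le u+v-1$ pull $I_{u+2},\ldots,I_{2u+v}$ into $\langle{_vT(u)}\rangle$; since $N(2u+v,u+v+1)$ has finite global dimension (indeed at most $3$, by a direct computation of the projective dimensions of the simples using $2r\ge n+2$), the full family of indecomposable injectives generates $\mathcal{D}$.

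Finally I would identify the endomorphism algebra. By the dual of Lemma~\ref{endt}, $\textup{Hom}_{\mathcal{D}}(T,T')$ is one-dimensional precisely when the shape diagram printed just after the statement has a directed path from $T$ to $T'$, and the compositions satisfy the full commutativity relations of the $u\times 2$ rectangular subdiagram together with the linear chain of length $v$ attached at its bottom-left corner; this matches the bound quiver ${_vQ(u)}$ and hence ${_vA(u)}$. The main technical obstacle will be organizing the four Hom subcases (injective--injective, injective--$F^{(j)}$, $F^{(j)}$--injective, and $F^{(j)}$--$F^{(j')}$) and verifying the rectangle commutativity at the cochain level, although each subcase is a short calculation of exactly the type performed in Lemmas~\ref{lem1} and \ref{endt}.
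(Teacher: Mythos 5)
Your proposal is correct and is essentially the paper's own (implicit) argument: the paper disposes of Proposition \ref{tiltingobject2} with the single word ``Dually,'' and the dual of the proof of Proposition \ref{tiltingobject} is exactly what you spell out --- the coresolutions $0\to S_{u+j}^{(j)}\to I_{u+1}\to I_{u+j+1}\to 0$, the homotopy-level Hom-vanishing combined with the fibrant half of Lemma \ref{fibrant}, generation via the triangles $F^{(j)}\to I_{u+1}\to I_{u+j+1}\to F^{(j)}[1]$ and finiteness of global dimension, and the identification of the endomorphism algebra with the incidence algebra of $_vQ(u)$. All the specific facts you use (socle of $S_{u+j}^{(j)}$ is $S_{u+1}$, the cokernel is $I_{u+j+1}$, $I_i$ is projective-injective for $i\le u$, $\textup{Hom}(I_{u+1},I_{u+j+1})=\mathbf{k}\,\pi_j$) check out, so no gap.
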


In the following picture, we point out the positions of the indecomposable direct summands of $T(u)^v$ (\emph{resp.} $_vT(u)$) sitting in the Auslander-Reiten quiver of $\mathcal{A}=\textup{mod}\mbox{-}N(2u+v,u+v+1)$.
\begin{figure}[H]
\begin{center}
\begin{tikzpicture}
\draw[-](0.15,0.25)--(0.3,0.5);
\draw[-](0.45,0.25)--(0.3,0.5);
\draw[-](0.45,0.75)--(0.3,0.5);
\draw[-](0.45,0.25)--(0.6,0.5);
\draw[-](0.45,0.75)--(0.6,0.5);
\draw[-](0.75,0.25)--(0.6,0.5);
\draw[-](0.75,0.75)--(0.6,0.5);
\draw[dotted](0.45,0.75)--(0.75,1.25);
\draw[dotted](0.75,0.25)--(1.35,0.25);
\draw[-](0.75,1.25)--(0.9,1.5);
\draw[-](1.05,1.75)--(0.9,1.5);
\draw[-](1.2,2)--(1.05,1.75);
\node()at(1.05,1.75){\tiny{$\bullet$}};
\node()at(1.2,2){\tiny{$\bullet$}};
\draw[-](1.2,2)--(1.35,1.75);
\draw[-](1.2,1.5)--(1.05,1.75);
\draw[-](1.2,1.5)--(1.35,1.75);
\node()at(1.2,1.5){\tiny{$\bullet$}};
\node()at(1.35,1.25){\tiny{$\bullet$}};
\node()at(1.5,2){\tiny{$\bullet$}};
\draw[-](0.9,1.5)--(1.05,1.25);
\draw[-](1.2,1.5)--(1.05,1.25);
\draw[-](1.2,1.5)--(1.35,1.25);
\draw[-](1.5,1.5)--(1.35,1.25);
\draw[dotted](1.65,0.75)--(1.35,1.25);
\node()at(1.65,0.75){\tiny{$\bullet$}};
\node()at(1.8,0.5){\tiny{$\bullet$}};
\node()at(1.95,0.25){\tiny{$\bullet$}};
\draw[-](1.65,0.75)--(1.8,0.5);
\draw[-](1.95,0.25)--(1.8,0.5);
\draw[-](1.65,0.75)--(1.5,0.5);
\draw[-](1.65,0.25)--(1.8,0.5);
\draw[-](1.35,0.25)--(1.5,0.5);
\draw[-](1.65,0.25)--(1.5,0.5);
\draw[-](1.5,2)--(1.35,1.75);
\draw[-](1.5,1.5)--(1.35,1.75);
\draw[-](1.5,2)--(1.65,1.75);
\draw[-](1.5,1.5)--(1.65,1.75);
\draw[dotted](1.7,2)--(2,2);
\node()at(2.1,2){\tiny{$\bullet$}};
\node()at(2.4,2){\tiny{$\bullet$}};
\draw[-](2.1,2)--(2.25,1.75);
\draw[-](2.1,1.5)--(2.25,1.75);
\draw[-](2.4,2)--(2.25,1.75);
\draw[-](2.4,1.5)--(2.25,1.75);
\draw[-](2.4,2)--(2.55,1.75);
\draw[-](2.4,1.5)--(2.55,1.75);
\draw[-](2.7,1.5)--(2.55,1.75);
\draw[dotted](2.7,1.5)--(3.15,0.75);
\draw[-](3.45,0.25)--(3.15,0.75);
\draw[-](3.15,0.25)--(3.3,0.5);
\draw[-](3.15,0.25)--(3,0.5);
\draw[-](2.85,0.25)--(3.15,0.75);
\draw[-](3,0.5)--(2.85,0.75);
\draw[dotted](2.05,0.25)--(2.75,0.25);
\draw[-](5.15,0.25)--(5.3,0.5);
\draw[-](5.45,0.25)--(5.3,0.5);
\draw[-](5.45,0.75)--(5.3,0.5);
\draw[-](5.45,0.25)--(5.6,0.5);
\draw[-](5.45,0.75)--(5.6,0.5);
\draw[-](5.75,0.25)--(5.6,0.5);
\draw[-](5.75,0.75)--(5.6,0.5);
\draw[dotted](5.45,0.75)--(5.75,1.25);
\draw[dotted](5.75,0.25)--(6.35,0.25);
\draw[-](5.75,1.25)--(5.9,1.5);
\draw[-](6.05,1.75)--(5.9,1.5);
\draw[-](6.2,2)--(6.05,1.75);
\node()at(7.55,1.75){\tiny{$\bullet$}};
\node()at(6.2,2){\tiny{$\bullet$}};
\draw[-](6.2,2)--(6.35,1.75);
\draw[-](6.2,1.5)--(6.05,1.75);
\draw[-](6.2,1.5)--(6.35,1.75);
\node()at(7.4,1.5){\tiny{$\bullet$}};
\node()at(7.25,1.25){\tiny{$\bullet$}};
\node()at(6.5,2){\tiny{$\bullet$}};
\draw[-](7.25,1.25)--(7.4,1.5);
\draw[-](7.25,1.25)--(7.1,1.5);
\draw[-](7.55,1.25)--(7.7,1.5);
\draw[-](7.55,1.25)--(7.4,1.5);
\draw[-](7.85,1.25)--(7.7,1.5);
\draw[-](5.9,1.5)--(6.05,1.25);
\draw[-](6.2,1.5)--(6.05,1.25);
\draw[-](6.2,1.5)--(6.35,1.25);
\draw[-](6.5,1.5)--(6.35,1.25);
\draw[dotted](6.95,0.75)--(7.25,1.25);
\node()at(6.95,0.75){\tiny{$\bullet$}};
\node()at(6.8,0.5){\tiny{$\bullet$}};
\node()at(6.65,0.25){\tiny{$\bullet$}};
\draw[-](6.95,0.75)--(6.8,0.5);
\draw[-](6.65,0.75)--(6.8,0.5);
\draw[-](6.95,0.25)--(6.8,0.5);
\draw[-](6.65,0.75)--(6.5,0.5);
\draw[-](6.65,0.25)--(6.8,0.5);
\draw[-](6.35,0.25)--(6.5,0.5);
\draw[-](6.65,0.25)--(6.5,0.5);
\draw[-](6.5,2)--(6.35,1.75);
\draw[-](6.5,1.5)--(6.35,1.75);
\draw[-](6.5,2)--(6.65,1.75);
\draw[-](6.5,1.5)--(6.65,1.75);
\draw[dotted](6.7,2)--(7,2);
\node()at(7.1,2){\tiny{$\bullet$}};
\node()at(7.4,2){\tiny{$\bullet$}};
\draw[-](7.1,2)--(7.25,1.75);
\draw[-](7.1,1.5)--(7.25,1.75);
\draw[-](7.4,2)--(7.25,1.75);
\draw[-](7.4,1.5)--(7.25,1.75);
\draw[-](7.4,2)--(7.55,1.75);
\draw[-](7.4,1.5)--(7.55,1.75);
\draw[-](7.7,1.5)--(7.55,1.75);
\draw[dotted](7.7,1.5)--(8.15,0.75);
\draw[-](8.45,0.25)--(8.15,0.75);
\draw[-](8.15,0.25)--(8.3,0.5);
\draw[-](8.15,0.25)--(8,0.5);
\draw[-](7.85,0.25)--(8.15,0.75);
\draw[-](8,0.5)--(7.85,0.75);
\draw[dotted](7.05,0.25)--(7.75,0.25);
\end{tikzpicture}
\end{center}
\caption{$T(u)^{v}$ and $_vT(u)$ in $\textup{mod}\mbox{-}N(2u+v,u+v+1)$}
\end{figure}

\subsection{Tilting realizations of $A(u)_v$ and $^vA(u)$ in $\textup{D}^{b}(N(2u+v,u+v+1))$}

In order to realize $A(u)_v$ and $^vA(u)$ by tilting objects, we need to use projective complexes in $\mathcal{D}$ rather than $N(2u+v,u+v+1)$ modules in $\mathcal{A}$.

Let $$E_i=(\xymatrix{0\ar[r]&P_{u+v+1}\ar[rr]^{\xi_{u+v+1,u+v+1+i}}&&P_{u+v+1+i})},\;\;1\leq i\leq u-1;$$ $$F_j=(\xymatrix{P_{j}\ar[rr]^{\xi_{j,u+v+1}}&&P_{u+v+1}\ar[r]&0)},\;\;u\leq j\leq u+v;$$ and $$G_l=(\xymatrix{P_{l}\ar[rr]^{\xi_{l,u+v+1}}&&P_{u+v+1}\ar[rr]^{\xi_{u+v+1,u+v+1+l}}&&P_{u+v+1+l}}),\;\;1\leq l\leq u-1.$$

\begin{prop}\label{tiltingobject3}
Let $$T(u)_{v}=P_{u+v+1}\oplus(\bigoplus_{i=1}^{u-1} E_i)\oplus(\bigoplus_{j=u}^{u+v} F_j)\oplus(\bigoplus_{l=1}^{u-1} G_l).$$
Then $T(u)_{v}$ is a tilting object in $\textup{D}^{b}(N(2u+v,u+v+1))$, whose endomorphism algebra is isomorphic to the one-branch extension algebra $A(u)_{v}$.
\end{prop}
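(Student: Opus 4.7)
Every direct summand of $T(u)_v$ is a bounded complex of projective $N(2u+v,u+v+1)$-modules concentrated in degrees from $\{-1,0,1\}$, sharing the common middle term $P_{u+v+1}$. By Lemma \ref{fibrant} each is cofibrant, so Hom-computations in $\mathcal{D}$ coincide with those in the homotopy category $\mathcal{K}$. The plan is to verify, in turn: (a) the vanishing of self-extensions of $T(u)_{v}$; (b) the generation of $\textup{D}^{b}(N(2u+v,u+v+1))$ by $T(u)_{v}$; (c) the identification of $\textup{End}_{\mathcal{D}}(T(u)_{v})$ with $A(u)_{v}$.

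For (a), the chain-level Hom vanishes for $|k|\geq 3$ by degree reasons. When $k=-2$ the only possibly non-zero component $f_{1}\colon X_{1}\to Y_{-1}$ is automatically zero in $\mathcal{A}$, because the index of $X_{1}$ lies in $\{u+v+2,\dots,2u+v\}$ while that of $Y_{-1}$ lies in $\{1,\dots,u+v\}$, violating the Nakayama Hom inequality $a\leq b<a+u+v+1$. When $k=2$ the only candidate $f_{-1}\colon X_{-1}\to Y_{1}$ factors uniquely through the common middle term $P_{u+v+1}$, and the null-homotopy $h_{-1}=\xi_{?,u+v+1}$ with $h_{0}=0$ annihilates it. For $k=\pm 1$, Lemma \ref{lem1} will handle the pairs where $X$ is among $\{P,E_{i}\}$ and the shifted target is concentrated in degrees $\leq 0$, using $\textup{Hom}_{\mathcal{A}}(X_{0},(Y[\pm 1])_{-1})=\textup{Hom}_{\mathcal{A}}(P_{u+v+1},P_{u+v+1})\neq 0$; Lemma \ref{lem2} will cover the analogous pairs for $X\in\{F_{j},G_{l}\}$ when one of its three auxiliary conditions applies; and the few residual pairs (those with the stalk complex $P$ on one side) will be dispatched by the explicit null-homotopy $h_{0}=c\cdot\textup{id}_{P_{u+v+1}}$.

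For (b), each indecomposable projective $P_{a}$ is to be placed in the triangulated closure of $T(u)_{v}$ as follows: $P_{u+v+1}$ is already a summand; the mapping-cone triangle
$$P_{u+v+1}\stackrel{\xi}{\longrightarrow} P_{u+v+1+i}\longrightarrow E_{i}[-1]\longrightarrow P_{u+v+1}[1]$$
extracts $P_{u+v+1+i}$ for $1\leq i\leq u-1$; the analogous triangle $P_{j}\to P_{u+v+1}\to F_{j}\to P_{j}[1]$ extracts $P_{j}$ for $u\leq j\leq u+v$; and the triangle $P_{l}[1]\to G_{l}\to E_{l}\to P_{l}[2]$, arising from the short exact sequence of complexes that kills the degree-$(-1)$ term of $G_{l}$, extracts $P_{l}$ for $1\leq l\leq u-1$.

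For (c), Lemma \ref{endt} applies directly to every ordered pair of summands since all share middle term $P_{u+v+1}$; it yields $\dim_{\bf k}\textup{Hom}_{\mathcal{K}}(X,Y)\in\{0,1\}$ dictated purely by the wings. A systematic tabulation will produce the covering relations
$$E_{1}<\cdots<E_{u-1}<P,\quad G_{1}<\cdots<G_{u-1}<F_{u}<\cdots<F_{u+v},\quad E_{i}<G_{i},\quad P<F_{u},$$
whose Hasse diagram reproduces exactly $Q(u)_{v}$: the set $\{E_{1},\ldots,E_{u-1},P\}$ forms the top row, the set $\{G_{1},\ldots,G_{u-1},F_{u},\ldots,F_{u+v}\}$ forms the bottom row plus tail, with vertical connectors $E_{i}\to G_{i}$ and $P\to F_{u}$. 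The one-dimensionality of every composable Hom-space forces the commutativity of all squares in the rectangle while ensuring the tail carries no extra relations. The main technical hurdle will be the case-by-case verification in step (c): one must check that exactly the covering relations of $Q(u)_{v}$, and no extra arrows or relations, emerge from Lemma \ref{endt}.
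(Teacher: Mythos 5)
Your proposal is correct and follows essentially the same route as the paper: cofibrancy via Lemma \ref{fibrant}, vanishing of the shifted Hom spaces by chain-level degree arguments together with Lemmas \ref{lem1} and \ref{lem2} (plus explicit null-homotopies), generation by cones on the structure maps, and identification of the endomorphism algebra through Lemma \ref{endt}. Two cosmetic slips only: the triangle involving $G_l$ should read $E_l\to G_l\to P_l[1]\to E_l[1]$, coming from the short exact sequence $0\to E_l\to G_l\to P_l[1]\to 0$ (the degree-$\geq 0$ part is the subcomplex, so your arrow $G_l\to E_l$ is not a chain map, and likewise $E_i[1]$ rather than $E_i[-1]$ appears in the first triangle), and the ``residual pairs'' for $k=1$ also include $\textup{Hom}_{\mathcal{K}}(F_j,F_{j'}[1])$ with $j>j'$, which your null-homotopy $h_0=c\cdot\textup{id}_{P_{u+v+1}}$ nevertheless handles.
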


\begin{proof}
Let $1\leq i,i',l,l'\leq u-1$ and $u\leq j,j'\leq u+v$. Since there are no homomorphisms from $P_a$ to $P_b$ if $a>b$, it is straightforward to check that
\begin{align*}&\textup{Hom}_{\mathcal{K}}(T(u)_{v},T(u)_{v}[k])\subseteq\textup{Hom}_{\mathcal{C}}(T(u)_{v},T(u)_{v}[k])=0; \quad\forall k\neq 0,1,2;\\
&\textup{Hom}_{\mathcal{K}}\big(P_{u+v+1}\oplus E_i,(P_{u+v+1}\oplus F_j)[1]\big)\subseteq\textup{Hom}_{\mathcal{C}}\big(P_{u+v+1}\oplus E_i,(P_{u+v+1}\oplus F_j)[1]\big)=0;\\
&\textup{Hom}_{\mathcal{K}}\big(F_j\oplus G_l,(P_{u+v+1}\oplus F_{j'})[2]\big)\subseteq\textup{Hom}_{\mathcal{C}}\big(F_j\oplus G_l,(P_{u+v+1}\oplus F_{j'})[2]\big)=0;\\
&\textup{Hom}_{\mathcal{K}}\big(P_{u+v+1}\oplus E_i,T(u)_{v}[2]\big)\subseteq\textup{Hom}_{\mathcal{C}}\big(P_{u+v+1}\oplus E_i,T(u)_{v}[2]\big)=0.
\end{align*}
Moreover, by Lemma \ref{lem1} we have
\begin{align*}
&\textup{Hom}_{\mathcal{K}}\big(P_{u+v+1}\oplus E_i,(E_{i'}\oplus G_l)[1]\big)=0;\\
&\textup{Hom}_{\mathcal{K}}\big(F_j\oplus G_l,(P_{u+v+1}\oplus F_{j'})[1]\big)=0;\\
&\textup{Hom}_{\mathcal{K}}\big(F_j\oplus G_l,(E_i\oplus G_{l'})[2]\big)=0.
\end{align*}
And by Lemma \ref{lem2} we have
\begin{align*}
\textup{Hom}_{\mathcal{K}}\big(F_j\oplus G_l,(E_i\oplus G_{l'})[1]\big)=0.
\end{align*}
Therefore, for any $k\neq 0$, by Lemma \ref{fibrant} we have
\begin{align*}\textup{Hom}_{\mathcal{D}}(T(u)_{v},T(u)_{v}[k])
=\textup{Hom}_{\mathcal{K}}(T(u)_{v},T(u)_{v}[k])=0.\end{align*}

Furthermore, by the construction of $T(u)_{v}$ we know that each indecomposable projective module in $\textup{mod}\mbox{-}N(2u+v,u+v+1)$ can be generated by $T(u)_{v}$. Hence $T(u)_{v}$ generates the bounded derived category $\textup{D}^{b}(N(2u+v,u+v+1))$. Therefore, $T(u)_{v}$ is a tilting object in  $\textup{D}^{b}(N(2u+v,u+v+1))$.

By Lemma \ref{endt}, it is easy to see that the tilting object has the following shape:
$$\tiny{\xymatrix{E_1\ar[r]\ar[d]&E_2\ar[r]\ar[d]&\cdots\ar[r]&E_{u-1}\ar[r]\ar[d]&P_{u+v+1}\ar[d]\\
G_1\ar[r]&G_2\ar[r]&\cdots\ar[r]&G_{u-1}\ar[r]&F_u
\ar[r]&\cdots\ar[r]&F_{u+v-1}\ar[r]&F_{u+v}}}$$
Hence the endomorphism algebra $\End_{\mathcal{D}}(T(u)_{v})$ is isomorphic to the one-branch extension algebra $A(u)_{v}$. We are done.
\end{proof}

Dually, we consider the following indecomposable injective complexes $$E_i'=(I_i\rightarrow I_{u}\rightarrow 0),\;\;1\leq i\leq u-1;$$ $$F_j'=(0\rightarrow I_{u}\rightarrow I_j),\;\;u+1\leq j\leq u+v+1;$$ and $$G_l'=(I_l\rightarrow I_{u}\rightarrow I_{u+v+1+l}),\;\;1\leq l\leq u-1.$$ Let
$$^{v}T(u)=I_{u}\oplus(\bigoplus_{i=1}^{u-1} E_i')\oplus(\bigoplus_{j=u+1}^{u+v+1} F_j')\oplus(\bigoplus_{l=1}^{u-1} G_l'),$$ which has the following shape: $$\tiny{\xymatrix{F_{u+1}'\ar[r]&F_{u+2}'\ar[r]&\cdots\ar[r]&F_{u+v+1}'\ar[r]\ar[d]&G_{1}'\ar[r]\ar[d]&\cdots\ar[r]&G_{u-2}'\ar[r]\ar[d]&G_{u-1}'\ar[d]\\
&&&I_u\ar[r]&E_1'\ar[r]&\cdots\ar[r]&E_{u-2}'\ar[r]&E_{u-1}'}}$$
Then we have the following result.

\begin{prop}\label{tiltingobject4}
$^{v}T(u)$ is a tilting object in $\textup{D}^{b}(N(2u+v,u+v+1))$, whose endomorphism algebra is isomorphic to the one-branch extension algebra $^{v}A(u)$.
\end{prop}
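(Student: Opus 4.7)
The plan is to follow the proof of Proposition \ref{tiltingobject3} in a dualized form. Since every summand of ${}^vT(u)$ is a left bounded complex of injective $N(2u+v,u+v+1)$-modules, Lemma \ref{fibrant} guarantees that these complexes are fibrant, and consequently $\textup{Hom}_{\mathcal{D}}({}^vT(u),{}^vT(u)[k])=\textup{Hom}_{\mathcal{K}}({}^vT(u),{}^vT(u)[k])$ for every $k$.

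First I would record dual versions of Lemmas \ref{lem1}, \ref{lem2} and \ref{endt} for indecomposable complexes of injectives. These can be proved verbatim by reversing all arrows in the original arguments, or derived by applying the ${\bf k}$-duality $D=\textup{Hom}_{\bf k}(-,{\bf k})$ together with the isomorphism $N(n,r)^{op}\cong N(n,r)$ given by the vertex relabelling $i\mapsto n+1-i$, under which indecomposable projectives and indecomposable injectives are interchanged.

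Next I would perform the pairwise $\textup{Hom}$-vanishing check for the summands $I_u$, $E_i'$, $F_j'$, $G_l'$. As in Proposition \ref{tiltingobject3}, the supports of these (at most) three-term injective complexes force $\textup{Hom}_{\mathcal{C}}$ to vanish outside a small window of shifts; the surviving potential contributions at the remaining $k\neq 0$ are eliminated by the dual of Lemma \ref{lem1} (for the unbalanced pairings such as $\textup{Hom}(F_j'\oplus G_l',(I_u\oplus F_{j'}')[\pm 1])$) and by the dual of Lemma \ref{lem2} (for the mixed pairing $\textup{Hom}(F_j'\oplus G_l',(E_i'\oplus G_{l'}')[\pm 1])$, the only case in which both source and target carry all three terms). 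This yields $\textup{Hom}_{\mathcal{D}}({}^vT(u),{}^vT(u)[k])=0$ for all $k\neq 0$. Generation is then immediate: each indecomposable injective $I_a$ either appears as a direct summand of ${}^vT(u)$ or arises as the cone of an evident morphism between such summands, so ${}^vT(u)$ generates $\textup{D}^b(N(2u+v,u+v+1))$ as a thick subcategory.

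Finally, the dual of Lemma \ref{endt} computes each one-dimensional Hom space between summands; reading off the resulting quiver and its induced commutativity relations, exactly as in the diagram drawn before the statement, identifies $\End_{\mathcal{D}}({}^vT(u))$ with ${}^vA(u)$. The main obstacle will be bookkeeping, namely matching the arrows and commutative squares of the computed endomorphism quiver with those of ${}^vQ(u)$ in Figure \ref{algebraquiver}: one needs to verify that the dual form of Lemma \ref{lem2} is applied correctly to the mixed pairings and that the orientation of every arrow is consistent with the chosen basis of canonical maps between the injective modules $I_a$, so that the full commutativity relations of ${}^vQ(u)$ are reproduced rather than a quotient of them.
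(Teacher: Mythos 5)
Your proposal is correct and follows essentially the paper's own (implicit) argument: the paper states Proposition \ref{tiltingobject4} as the dual of Proposition \ref{tiltingobject3}, and your plan—using Lemma \ref{fibrant} for the injective complexes, dualizing Lemmas \ref{lem1}, \ref{lem2} and \ref{endt} (e.g.\ via $D=\Hom_{\bf k}(-,{\bf k})$ and $N(n,r)^{op}\cong N(n,r)$), checking the Hom-vanishing, generation through the injectives, and reading off $\End_{\mathcal{D}}({}^vT(u))\cong{}^vA(u)$ from the one-dimensional Hom spaces—is exactly that dualization.
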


\noindent {\bf Acknowledgements.}\quad
This work was supported by the National Natural Science Foundation of China (No. 11871404).

\bibliographystyle{plain}

\begin{thebibliography}{10}

\bibitem{Bon1989}
A.~I. Bondal.
\newblock Representations of associative algebras and coherent sheaves.
\newblock {\em Izv. Akad. Nauk SSSR Ser. Mat.}, 53(1):25--44, 1989.

\bibitem{Gro1977}
A.~Grothendieck.
\newblock Groupes de classes de cat\'{e}gories ab\'{e}liennes et triangul\'{e}es,
  complexes parfaits.
\newblock In {\em SGA V}, volume 589 of {\em Lecture Notes in Math.}, pages
  351--371. Springer, Heidelberg, 1977.

\bibitem{Hap1988}
D.~Happel.
\newblock {\em Triangulated categories in the representation theory of
  finite-dimensional algebras}, volume 119 of {\em London Mathematical Society
  Lecture Note Series}.
\newblock Cambridge University Press, Cambridge, 1988.

\bibitem{Hap2009}
D.~Happel.
\newblock The {C}oxeter polynomial for a one point extension algebra.
\newblock {\em J. Algebra}, 321(7):2028--2041, 2009.

\bibitem{HM2014}
L.~Hille and J.~M\"{u}ller.
\newblock On tensor products of path algebras of type {$A$}.
\newblock {\em Linear Algebra Appl.}, 448:222--244, 2014.

\bibitem{Kel2007}
B.~Keller.
\newblock Derived categories and tilting.
\newblock In {\em Handbook of tilting theory}, volume 332 of {\em London Math.
  Soc. Lecture Note Ser.}, pages 49--104. Cambridge Univ. Press, Cambridge,
  2007.

\bibitem{KLM2013}
D.~Kussin, H.~Lenzing, and H.~Meltzer.
\newblock Triangle singularities, {ADE}-chains, and weighted projective lines.
\newblock {\em Adv. Math.}, 237:194--251, 2013.

\bibitem{KLM20132}
D.~Kussin, H.~Lenzing, and H.~Meltzer.
\newblock Nilpotent operators and weighted projective lines.
\newblock {\em J. Reine Angew. Math.}, 685:33--71, 2013.

\bibitem{Lad2008}
S.~Ladkani.
\newblock On derived equivalences of categories of sheaves over finite posets.
\newblock {\em J. Pure Appl. Algebra}, 212(2):435--451, 2008.

\bibitem{Lad2013}
S.~Ladkani.
\newblock On derived equivalences of lines, rectangles and triangles.
\newblock {\em J. Lond. Math. Soc. (2)}, 87(1):157--176, 2013.

\bibitem{Len1997}
H.~Lenzing.
\newblock Coxeter transformations associated with finite-dimensional algebras.
\newblock In {\em Computational methods for representations of groups and
  algebras ({E}ssen, 1997)}, volume 173 of {\em Progr. Math.}, pages 287--308.
  Birkh\"{a}user, Basel, 1999.

\bibitem{RS2008}
C.~M. Ringel and M.~Schmidmeier.
\newblock Invariant subspaces of nilpotent linear operators. {I}.
\newblock {\em J. Reine Angew. Math.}, 614:1--52, 2008.

\bibitem{Ser1955}
J.-P. Serre.
\newblock Faisceaux alg\'{e}briques coh\'{e}rents.
\newblock {\em Ann. of Math. (2)}, 61:197--278, 1955.

\bibitem{Ver1977}
J.-L. Verdier.
\newblock Cat\'{e}gories d\'{e}riv\'{e}es: quelques r\'{e}sultats (\'{e}tat 0).
\newblock In {\em Cohomologie \'{e}tale}, volume 569 of {\em Lecture Notes in
  Math.}, pages 262--311. Springer, Berlin, 1977.

\end{thebibliography}

\end{document}